\documentclass[a4paper,11pt]{amsart}
\usepackage{amsmath,amsthm,amscd,amssymb,amsfonts, amsbsy}
\usepackage{latexsym}
\usepackage{txfonts}

\usepackage{pgf}
\usepackage{color}

\numberwithin{equation}{section}

\theoremstyle{plain}
\newtheorem{theorem}[equation]{Theorem}
\newtheorem{lemma}[equation]{Lemma}
\newtheorem{corollary}[equation]{Corollary}

\newtheorem{est}{Estimate}

\newtheorem{fact}{Fact}

\theoremstyle{definition}
\newtheorem{definition}[equation]{Definition}

\theoremstyle{remark}
\newtheorem{remark}[equation]{Remark}

\newtheorem{claim}[equation]{Claim}

\newtheorem{case}[equation]{Case}

\newcommand{\dv}{\operatorname{div}}

\newcommand{\supp}{\operatorname{supp}}
\newcommand{\dist}{\operatorname{dist}}

\newcommand{\aaviint}{{\fint\!\!\!\fint}}
\newcommand{\leftexp}[2]{ {\vphantom{#2}}^{#1}{#2}}

\newcommand{\re}{\mathbb{R}}
\newcommand{\rn}{\mathbb{R}^n}

\newcommand{\reu}{\mathbb{R}^{n+1}_+}
\newcommand{\ree}{\mathbb{R}^{n+1}}
\newcommand{\dd}{\mathbb{D}}

\newcommand{\eps}{\varepsilon}
\newcommand{\epp}{\epsilon}
\newcommand{\vp}{\varphi}
\newcommand{\vpt}{\tilde{\varphi}}
\newcommand{\pe}{\mathcal{P}_{\eta t}^*}

\newcommand{\Pe}{\mathcal{P}_{\eta t}}
\newcommand{\p}{\mathcal{P}}
\newcommand{\s}{\mathcal{S}}
\newcommand{\A}{\mathcal{A}}
\newcommand{\G}{\mathcal{G}}
\newcommand{\e}{\mathcal{E}}

\newcommand{\B}{\mathcal{B}}
\newcommand{\m}{\mathcal{M}}
\newcommand{\po}{{\partial\Omega}}
\newcommand{\oo}{\mathcal{O}}
\newcommand{\RRR}{\mathcal{R}}

\newcommand{\hm}{\omega}

\newcommand{\RR}{{\mathbb{R}}}

\newcommand{\bp}{\noindent {\it Proof}.\,\,}
\newcommand{\ep}{\hfill$\Box$ \vskip 0.08in}

\DeclareMathOperator{\diam}{diam}
\DeclareMathOperator{\essinf}{essinf}
\def\div{\mathop{\operatorname{div}}}

\parskip=3pt

\begin{document}

\title[$S/N$ estimates and the Dirichlet problem]{Square function/Non-tangential maximal function
estimates and the Dirichlet problem for
non-symmetric elliptic operators}

\author{Steve Hofmann}

\address{Steve Hofmann
\\
Department of Mathematics
\\
University of Missouri
\\
Columbia, MO 65211, USA} \email{hofmanns@missouri.edu}

\author{Carlos Kenig}

\address{Carlos Kenig
\\
Department of Mathematics
\\
University of Chicago
\\
Chicago, IL,  60637 USA} \email{cek@math.chicago.edu}

\author{Svitlana Mayboroda}

\address{Svitlana Mayboroda
\\
School of Mathematics
\\
University of Minnesota
\\
Minneapolis, MN, 55455  USA} \email{svitlana@math.umn.edu}

\author{Jill Pipher}

\address{Jill Pipher
\\
Department of Mathematics
\\
Brown University
\\
Providence, RI,   USA} \email{jpipher@math.brown.edu}

\thanks{Each of the authors was supported by NSF.  
\\
\indent
This work has been possible thanks to the support and hospitality of the \textit{University of Chicago},  the \textit{University of Minnesota}, the \textit{University of Missouri},  \textit{Brown University},  
and the \textit{BIRS Centre in Banff} (Canada). The authors would like to express their gratitude to these institutions.}

\date{\today}
\subjclass[2000]{42B99, 42B25, 35J25, 42B20}

\keywords{divergence form elliptic equations, Dirichlet problem, harmonic measure, square function, non-tangential maximal function, $\epsilon$-approximability,
$A_\infty$ Muckenhoupt weights, layer potentials.}

\begin{abstract}
We consider divergence form elliptic operators
$L= -\div A(x) \nabla$, defined in the half space $\reu$, $n\geq 2$,
where the coefficient matrix $A(x)$
is bounded, measurable, uniformly elliptic, $t$-independent, and not necessarily symmetric.
We establish square function/non-tangential maximal function estimates for solutions of 
the homogeneous equation $Lu=0$, and we then combine these estimates with the method of
``$\epp$-approximability" to show that $L$-harmonic measure is absolutely continuous with
respect to surface measure (i.e., n-dimensional Lebesgue measure) on the boundary, 
in a scale-invariant sense: 
more precisely, that it belongs to the class $A_\infty$ with respect to surface measure
(equivalently, that the Dirichlet problem is solvable with data in $L^p$,
for some $p<\infty$).   Previously, these results had been known only in the case $n=1$.
\end{abstract}

\maketitle

\tableofcontents

\section{Introduction and statements of results}

We consider a divergence form elliptic operator
$$L:=-\dv A(x)\nabla,$$
defined in $\mathbb{R}^{n+1}$, 
where $A$ is $(n+1)\times(n+1)$, real, 
$L^\infty$, $t$-independent,  possibly non-symmetric,
and satisfies the 
uniform ellipticity condition
\begin{equation}
\label{eq1.1*} \lambda|\xi|^{2}\leq\,\langle A(x)\xi,\xi\rangle
:= \sum_{i,j=1}^{n+1}A_{ij}(x)\xi_{j} \xi_{i}, \quad
  \Vert A\Vert_{L^{\infty}(\mathbb{R}^{n})}\leq\lambda^{-1},
\end{equation}
 for some $\lambda>0$, and for all $\xi\in\ree$, $x\in\mathbb{R}^{n}$. 
As usual, the divergence form equation is interpreted in the weak sense, i.e., we say that $Lu=0$
in a domain $\Omega$ if $u\in W^{1,2}_{loc}(\Omega)$ and 
$$\int A \nabla u \cdot \nabla \Psi = 0\,,$$ for all  $\Psi \in C_0^\infty(\Omega)$.   
For us, $\Omega$ will be a Lipschitz graph domain
\begin{equation}\label{eq1.2*}
\Omega_\psi:=\{(x,t)\in\ree: t>\psi(x)\}\,,
\end{equation}
where $\psi:\rn\to\RR$ is a Lipschitz function, 
or more specifically (but without loss of generality), $\Omega$ will be the half-space
$\reu:=\{(x,t)\in\mathbb{R}^{n}\times(0,\infty)\}$.

The purpose of this paper is two-fold.

First, we shall establish global and local $L^p$ bounds for the square function
\begin{equation}\label{eq1.square}
S^\alpha(u)(x):=\left(\iint_{|x-y|<\alpha t}|\nabla u(y,t)|^2 \frac{dy dt}{t^{n-1}}\right)^{1/2}\,,
\end{equation}
in terms of the non-tangential maximal function
\begin{equation}\label{eq1.nt}
N^\alpha_*(u)(x):=\sup_{(y,t): |x-y|<\alpha t}|u(y,t)|\,
\end{equation}
(for the sake of brevity we shall refer to such bounds as ``$S<N$" estimates),
and vice versa (we designate these as ``$N<S$" estimates)\footnote{We note that 
in the sequel, when the value of the aperture $\alpha$ is unimportant, or is clear in context,
we shall often simply write $S$ and $N_*$ in lieu of $S^\alpha$ and $N_*^\alpha$.
It is well known that $L^p$ norms for $N_*^\alpha f$
are equivalent for any choice of $\alpha$, and similarly for
$S^\alpha f$ (see \cite{FS}, \cite{CMS}.)}.  As regards the latter, 
we recall that global $N<S$ bounds were already known \cite{AA}; our new contribution here
is to prove a local version.  On the other hand, our $S<N$ estimates are completely new,
for all $n\geq 2$  (the case $n=1$ appeared previously in \cite{KKPT}).

Second, having established (local) $S/N$ estimates, we then use these,
along with the method of ``$\epp$-approximability", to obtain absolute continuity
of $L$-harmonic measure $\hm$
with respect to ``surface" measure $dx$, on the boundary of $\reu$.   In fact, we prove
a stronger, scale-invariant version of absolute continuity, namely that
$\hm$ belongs to the class $A_\infty$.    Let us recall that the latter notion
is defined as follows.   In the sequel, $Q$ will denote a cube in $\rn$.

\begin{definition} ( $A_\infty$, $A_\infty(Q_0)$).  \label{def1.ainfty}  
A non-negative Borel measure $\hm$ defined on $\rn$ (resp., on a fixed cube $Q_0$)
is said to belong to the class $A_\infty$ (resp. $A_\infty(Q_0)$), 
if there are positive constants $C$ and $\theta$
such that for every cube $Q$ (resp. every cube $Q\subseteq Q_0$),
and every Borel set $F\subset Q$, we have
\begin{equation}\label{eq1.ainfty}
\hm (F)\leq C \left(\frac{|F|}{|Q|}\right)^\theta\,\hm (Q).
\end{equation}
\end{definition}
It is well known (see \cite{CF}) that the $A_\infty$ property is equivalent to the condition
that $\hm$ is absolutely continuous with respect to Lebesgue measure, and that
there is an exponent $q>1$ such that the Radon-Nykodym derivative $k:=d\hm/dx$ satisfies 
the ``reverse H\"older" estimate
$$\left(\fint_Qk(x)^q dx\right)^{1/q} \leq C \fint_Q k(x)\, dx\,,$$
uniformly for every cube $Q$ (resp. every $Q\subseteq Q_0$).

It is also well known (see \cite[Theorem 1.7.3]{Ke}) that
the fact that harmonic measure belongs to the class $A_\infty$
is equivalent to the solvability of the following $L^p$ Dirichlet problem, for some $p<\infty$
(in fact for $p$ dual to the exponent $q$ in the reverse H\"older inequality):
\begin{equation}
\begin{cases} Lu=0\text{ in }\mathbb{R}_{+}^{n+1}\\ 
\lim_{t\to 0}u(\cdot,t)=f\text{ in }
L^{p}(\mathbb{R}^{n}) \text{ and } {\rm n.t.}\\ 
\|N_*(u)\|_{L^p(\mathbb{R}^n)}<\infty\,.
\end{cases}\tag{$D_p$}\label{Dp}\end{equation}
Here, the notation ``$u \to f \, {\rm n.t.}$" means that
$\lim_{(y,t) \to (x,0)} u(y,t) = f(x),$ for $a.e. \, x \in \mathbb{R}^n$,
where the limit runs over $(y,t)\, \in \Gamma(x):
=\{(y,t)\in\mathbb{R}_{+}^{n+1}:|y-x|<t\}$.

We also remark that we obtain, as another immediate corollary of the $A_\infty$
property of harmonic measure, that  the layer potentials associated to the operator $L$, as well as
its complex perturbations, enjoy $L^2$ estimates (\cite{H}, \cite{AAAHK}).

We now state our results precisely.  In the sequel, our ambient space will always be $\ree$, with
$n\geq 2$.
\begin{theorem}\label{th1}
Let $L$ be an elliptic operator as above, defined in $\ree$, with $t$-independent coefficients, and suppose that
$Lu=0$ in $\reu$.  Then 
\begin{equation}\label{eq1.SN}
\|S(u)\|_{L^p(\rn)} \lesssim \|N_*(u)\|_{L^p(\rn)}\,,\qquad 0<p<\infty\,,
\end{equation}
where the implicit constant depends upon $p, n$, ellipticity, and the apertures of the cones defining
$S$ and $N_*$.
\end{theorem}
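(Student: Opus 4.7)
The strategy is a two-step reduction: (i) prove a local $L^2$ Carleson-type square-function bound on sawtooth regions where $N_* u$ is controlled, and (ii) pass from this local bound to the global $L^p$ estimate via a good-$\lambda$ inequality. For (ii), once one has the Carleson-measure estimate
$$\iint_{\Omega_E} |\nabla u(y,t)|^2\, t\, dy\, dt \;\lesssim\; \lambda^2 |Q_0|$$
for every cube $Q_0$ and every Borel set $E\subset Q_0$ on which $N_* u \leq \lambda$, with $\Omega_E$ the sawtooth region over $E$, a standard good-$\lambda$ argument in the spirit of \cite{CMS} yields the $L^p$ bound for every $0<p<\infty$; this also gives the local version on tents which will be needed later.

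For the local estimate in (i), fix a smooth cutoff $\Phi$ adapted to $\Omega_E$ and use uniform ellipticity to replace $|\nabla u|^2$ by $\langle A\nabla u,\nabla u\rangle$ (the anti-symmetric part of $A$ contributing nothing when paired with $\nabla u\otimes\nabla u$). Multiplying by $t\Phi$, integrating by parts, and using $Lu=0$ gives
$$\iint \langle A\nabla u,\nabla u\rangle\, t\Phi \,dy\,dt \;=\; -\iint u\, \langle A\nabla u,\nabla(t\Phi)\rangle\,dy\,dt \;+\; \mathrm{BT},$$
where $\mathrm{BT}$ collects boundary contributions on $\partial\Omega_E$; these are bounded by $\lambda^2|Q_0|$ using $N_* u\leq \lambda$ together with Caccioppoli at the top and lateral pieces of the sawtooth. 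Expanding $\nabla(t\Phi) = \Phi\, e_{n+1} + t\nabla\Phi$, the piece with $t\nabla\Phi$ is supported near $\partial\Omega_E$ and is also absorbed into $\lambda^2|Q_0|$ by Cauchy--Schwarz plus the same boundary control.

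The principal obstacle is the remaining ``main term'' $\mathcal{M} := -\iint u\,(A\nabla u)_{n+1}\,\Phi\,dy\,dt$. Exploiting the $t$-independence of $A$, split $(A\nabla u)_{n+1} = A_{n+1,n+1}\partial_t u + \sum_{j=1}^{n} A_{n+1,j}\partial_j u$. The diagonal contribution becomes $\tfrac12 A_{n+1,n+1}\partial_t(u^2)$ and integrates by parts in $t$ cleanly (since $A_{n+1,n+1}$ does not depend on $t$), producing only boundary terms absorbed into $\lambda^2|Q_0|$. The tangential contributions are the genuinely hard point: since $A$ is only $L^\infty$, one cannot afford to differentiate $A_{n+1,j}$. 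Instead, one must use that $F := A\nabla u$ is a divergence-free vector field (a direct consequence of $Lu=0$), pair the tangential pieces with auxiliary test functions constructed from $u$ itself, and invoke the $N<S$ bound of \cite{AA} applied to the adjoint operator $L^*$ in order to control the resulting Carleson measures on $\Omega_E$. It is precisely here that the one-dimensional argument of \cite{KKPT} has no analogue, and where the new multi-dimensional machinery of this paper must be developed.

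Once $\mathcal{M}$ is bounded by $\lambda^2|Q_0|$ plus a small fraction of $\iint|\nabla u|^2 t\Phi$ (the latter absorbed into the left-hand side via a standard hiding trick), step (i) is complete, and step (ii) then delivers Theorem~\ref{th1}. The anticipated main difficulty is thus the tangential part of $\mathcal{M}$; all other ingredients (good-$\lambda$, Caccioppoli, and the $p=2$ integration by parts) are classical adaptations of well-known techniques.
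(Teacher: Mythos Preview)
Your overall architecture (local Carleson bound on sawtooths, then good-$\lambda$) is reasonable, and you correctly locate the crux at the tangential piece $\sum_{j\le n} A_{n+1,j}\, u\,\partial_j u$. But your proposed treatment of that piece is a genuine gap. Saying ``use that $A\nabla u$ is divergence free, pair with auxiliary test functions built from $u$, and invoke the $N<S$ bound of \cite{AA} for $L^*$'' is not a method: the divergence-free condition on $A\nabla u$ is just the equation $Lu=0$ again and says nothing useful about the \emph{tangential} component $(A\nabla u)_{n+1}^{\mathrm{tan}}=\sum_{j\le n}A_{n+1,j}\partial_j u$ by itself; and it is not at all clear how an $N<S$ estimate for $L^*$-solutions could be brought to bear on a term of the form $\iint u\,A_{n+1,j}\partial_j u\,\Phi$, which involves the $L$-solution $u$ and the bare $L^\infty$ coefficients $A_{n+1,j}$. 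No adjoint solution appears, and there is no obvious way to manufacture one without differentiating $A_{n+1,j}$, which you rightly note is forbidden.

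The paper's resolution is substantially different and uses machinery you do not mention. One first performs an $L_\|^*$-adapted Hodge decomposition of the row ${\bf c}=(A_{n+1,j})_{j\le n}$ on $5Q$, writing ${\bf c}\,1_{5Q}=-A_\|^*\nabla\varphi+{\bf h}$ with $\varphi\in W_0^{1,2+\varepsilon}(5Q)$ and ${\bf h}$ divergence free. One then makes the (non-Lipschitz) change of variable $\rho(x,t)=(x,\,t-\varphi(x)+e^{-(\eta t)^2 L_\|^*}\varphi(x))$, which replaces ${\bf c}$ in the pulled-back matrix by ${\bf h}-A_\|^*\nabla_x e^{-(\eta t)^2 L_\|^*}\varphi$. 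After this change one computes $L_1^*(t)$ and integrates by parts; the dangerous tangential piece has been converted into terms controlled by Kato-type square functions of $\nabla\varphi$ (via the solution of the Kato problem \cite{HLMc,AHLMcT}) plus terms involving the divergence-free ${\bf h}$, which can be integrated by parts harmlessly. All of this is carried out on a good set $F\subset Q$ where several auxiliary maximal functions of $\varphi$ are bounded; this is why the argument first yields only large $p>p_0=2(2+\varepsilon)/\varepsilon$, and a separate Fefferman--Stein sawtooth step is needed afterward to reach all $0<p<\infty$. None of the three essential ingredients --- the Hodge decomposition of ${\bf c}$, the semigroup-adapted pullback $\rho$, and the Kato square-function bounds --- is present in your sketch, and without them the tangential term cannot be closed.
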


The previous theorem has the following immediate local corollary.  
Given a cube $Q\subset \rn$, let 
\begin{equation}\label{eq1.cbox}
T_Q:= Q\times \Big(0,\ell(Q)\Big)\subset\reu
\end{equation}
denote the standard  
Carleson box  above $Q$, where, here and in the sequel, $\ell(Q)$
is the side length of $Q$.  
\begin{corollary}\label{c1}
Under the same hypotheses as in Theorem \ref{th1}, for a bounded solution $u$,
we have the Carleson measure estimate
\begin{equation}\label{eq1.9}
\sup_Q \frac1{|Q|} \iint_{T_Q} |\nabla u(x,t)|^2 t dt dx \leq C\, \|u\|_{L^\infty(\Omega)}\,,
\end{equation}
where $C$ depends only upon dimension and ellipticity.
\end{corollary}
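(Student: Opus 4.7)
The plan is to derive \eqref{eq1.9} from the $L^2$ case of Theorem \ref{th1}, using that a bounded solution satisfies the trivial pointwise bound $N_*(u)\leq\|u\|_{L^\infty(\reu)}$. The first step is a Fubini argument. Fix an aperture $\alpha\in(0,1)$ small enough that $\{x\in\rn:|x-y|<\alpha t\}\subset 2Q$ whenever $(y,t)\in T_Q$, and let $S^\alpha_{\ell(Q)}(u)(x)$ denote the square function whose defining cone is truncated at height $\ell(Q)$. Then
\begin{equation*}
\int_{2Q}S^\alpha_{\ell(Q)}(u)(x)^2\,dx\;=\;\iint_{T_Q}|\nabla u(y,t)|^2\,\frac{|\{x\in 2Q:|x-y|<\alpha t\}|}{t^{n-1}}\,dy\,dt\;\gtrsim\;\iint_{T_Q}|\nabla u|^2\,t\,dy\,dt,
\end{equation*}
with implicit constant depending only on $n$ and $\alpha$. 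The corollary thus reduces to the localized bound $\int_{2Q}S^\alpha_{\ell(Q)}(u)^2\,dx\lesssim|Q|\,\|u\|_{L^\infty(\reu)}^2$.

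The main obstacle is that Theorem \ref{th1} is a global statement on $\rn$, while for a merely bounded $u$ the norm $\|N_*(u)\|_{L^2(\rn)}$ is typically infinite, so a direct application is useless. I would circumvent this by applying the Lipschitz-graph version of Theorem \ref{th1} (available throughout the paper thanks to the ``without loss of generality'' reduction noted after \eqref{eq1.2*}) on the sub-domain $\Omega_\psi$, where $\psi$ is a $1$-Lipschitz function vanishing on $4Q$ and satisfying $\psi(y)\gtrsim\ell(Q)+\dist(y,4Q)$ outside $8Q$. For $x\in 2Q$, the truncated cone $\Gamma^\alpha(x)\cap\{t<\ell(Q)\}$ lies entirely inside $\Omega_\psi\cap T_{8Q}$, so the truncated square function $S^\alpha_{\ell(Q)}$ is unchanged by the substitution. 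Passing to the corresponding truncated non-tangential maximal function $N_{*,\ell(Q)}(u)$ on $\partial\Omega_\psi$, one sees that $N_{*,\ell(Q)}(u)(X)=0$ at any boundary point $X=(y,\psi(y))$ with $\psi(y)$ much larger than $\ell(Q)$; thus the right-hand side of the truncated $S<N$ estimate is supported on a bounded piece of $\partial\Omega_\psi$ of surface measure $\lesssim|Q|$, and using $N_{*,\ell(Q)}(u)\leq\|u\|_{L^\infty(\reu)}$ there yields the required localized bound.

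Two technical points remain. The first is the passage from Theorem \ref{th1} to its truncated-cone variant; this is standard and follows either from the proof of Theorem \ref{th1} itself or from a direct cutoff argument on the cones. The second is the uniformity of constants as $Q$ varies across all scales and locations; this is immediate from the translation- and dilation-invariance of the ellipticity hypothesis \eqref{eq1.1*} together with the $Q$-independent Lipschitz constant of $\psi$, so it presents no genuine difficulty.
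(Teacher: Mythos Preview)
Your Fubini reduction is fine, but the argument has a genuine gap at precisely the step you flag as ``technical'': the passage from Theorem~\ref{th1} to a truncated $S<N$ estimate. This is \emph{not} a simple cutoff argument on cones. Truncating the square-function cone makes the left side smaller, and truncating the $N_*$ cone makes the right side smaller; doing both at once does not preserve the inequality. In your setup, the full $S<N$ on $\Omega_\psi$ yields only
\[
\int_{\partial\Omega_\psi}S_\psi(u)^2\,d\sigma\;\lesssim\;\int_{\partial\Omega_\psi}N_{*,\psi}(u)^2\,d\sigma\;\leq\;\|u\|_\infty^2\cdot\sigma(\partial\Omega_\psi)\;=\;\infty,
\]
which is vacuous. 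Replacing $N_{*,\psi}$ by the truncated version $N_{*,\psi,\ell(Q)}$ (so that it is compactly supported on $\partial\Omega_\psi$) is exactly what cannot be done as a black-box consequence of the global inequality; the truncated estimate you want is essentially equivalent to the Carleson bound you are trying to prove.

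Your alternative---extracting the truncated bound ``from the proof of Theorem~\ref{th1} itself''---is valid, and is exactly the paper's second route: the good-$\lambda$ argument in Section~\ref{sSN} is inherently local and produces (cf.\ \eqref{eq3.21aa} and the remarks preceding it) a bound of the form
\[
\int_F\int_0^{\ell(Q)}|\nabla u|^2\,t\,dt\,dx\;\lesssim\;|Q|\left(\fint_{2Q}N_*^\alpha(u)^{p_0}\right)^{2/p_0}
\]
on an ample subset $F\subset Q$, from which the Carleson estimate follows directly. But once this is invoked, your Lipschitz graph $\Omega_\psi$ plays no role whatsoever. The paper's first route---split the boundary data into a local piece and a far-away piece, apply Theorem~\ref{th1} globally to the solution with local data (whose $N_*$ is genuinely in $L^2$ thanks to the decay in Lemma~\ref{l4.8}), and use boundary H\"older continuity for the far-away piece---is the closest thing to a black-box reduction, but it too requires substantially more than a cutoff on cones.
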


\begin{proof}[Sketch of proof of Corollary \ref{c1}]
The corollary may be deduced from the theorem by a variant of the argument in \cite{FS}: 
we divide the boundary data
into  a ``local" part plus a  ``far-away" part (which we further sub-divide 
in a dyadic annular fashion), and then use Theorem \ref{th1} to handle the local part,
and H\"older continuity at the boundary to obtain summable decay for the dyadic terms in the 
far-away part.  The treatment of the local part requires in addition the 
use of a decay estimate for solutions with boundary data vanishing outside a 
cube (cf. Lemma \ref{l4.8} below).  We omit the details.
Alternatively,  \eqref{eq1.9} may be gleaned directly from local estimates established in our proof of Theorem \ref{th1} (cf. Section \ref{sSN} below, where we shall make note of the local estimates in
question, during the course of the proof).  
\end{proof}

We recall that 
the converse direction to Theorem \ref{th1}, at least in the case $p=2$,
has recently been obtained by
Auscher and Axelsson, and appears in \cite[Theorem 2.4, part (i)]{AA}, as follows:
\begin{equation}\label{eq1.NS}
\|N_*(u)\|_{L^2(\rn)} \lesssim \|S(u)\|_{L^2(\rn)}\,.
\end{equation}
In fact, the result of \cite{AA} is considerably more general, in that \eqref{eq1.NS} holds
in the case of complex coefficients and even strongly elliptic systems, and furthermore
the hypothesis of $t$-independence may be relaxed to a sort of scale-invariant square Dini
smoothness in the $t$-variable, averaged in $x$.  We refer the reader to \cite{AA} for details.
We remark that it is still an (apparently difficult) open problem to
extend Theorem \ref{th1} (that is, the $S<N$ direction), to the case of complex coefficients,
even assuming $t$-independence as we do here.   

With \eqref{eq1.NS}, the global estimate of \cite{AA}, in hand, we shall deduce a local version.
Given a cube $Q\subset \rn$, let $\theta Q$ denote the concentric cube of side length $\theta\,\ell(Q)$,
and let
\begin{equation}\label{eq1.cboxshort}
R_Q:=Q\times \Big(0,\ell(Q)/2\Big)\,,
\end{equation}
be the ``short" Carleson box above $Q$.

\begin{theorem}\label{th2} Let $L$ be a $t$-independent elliptic operator as above,  
and suppose that
$u\in L^\infty$ is a solution of
$Lu=0$ in $\reu$.
Then for each cube
$Q\subset \rn$, and each $0<\theta<1$, 
there is a set $K_Q= K_Q(\theta)\subset\subset R_Q$, with
$\dist(K_Q,\partial R_Q)\approx \ell(Q)$ (depending upon $\theta$), such that
\begin{equation}\label{eq1.NSloc}
\fint_{\theta Q} |u(x)|^2\, dx\leq C_{\theta} 
\left(\frac1{|Q|} \iint_{R_Q} |\nabla u(x,t)|^2 t dt dx \,+\,\sup_{K_{Q}} |u|^2 \right)\,,
\end{equation}
where the constant $C_{\theta}$ 
depends also on dimension and ellipticity.
\end{theorem}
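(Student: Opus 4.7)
The plan is to deduce \eqref{eq1.NSloc} from the global Auscher--Axelsson estimate \eqref{eq1.NS} by means of a cutoff-and-extension argument, reducing the local question to a global one and then controlling the resulting global square function by its localized counterpart plus a compact error.

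First I would fix $\theta' \in (\theta,1)$ and a smooth cutoff $\eta \in C_c^\infty(\rn)$ with $\eta \equiv 1$ on $\theta Q$, $\supp \eta \subset \theta' Q$, and $\|\nabla\eta\|_\infty \lesssim \ell(Q)^{-1}$. Writing $f := u|_{\partial\reu}$ for the non-tangential boundary trace of $u$ (available a.e.\ for bounded $L$-solutions with $t$-independent coefficients), define $v$ as the bounded $L$-solution in $\reu$ with boundary data $\eta f$. By the maximum principle $\|v\|_{L^\infty} \leq \|u\|_{L^\infty}$, and by construction $v = u$ on $\theta Q \times \{0\}$; since $\eta f \in L^2(\rn)$ the estimate \eqref{eq1.NS} applies to $v$, and because $N_*v(x) \geq |v(x,0)| = |u(x,0)|$ for $x \in \theta Q$ this yields
\[
\fint_{\theta Q}|u|^2\,dx \;=\; \fint_{\theta Q}|v|^2\,dx \;\leq\; \frac{1}{|\theta Q|}\|N_* v\|_{L^2(\rn)}^2 \;\lesssim_\theta\; \frac{1}{|Q|} \iint_\reu |\nabla v|^2 \, t \, dt \, dx.
\]
This reduces the proof to an estimate on $\iint_\reu |\nabla v|^2 t\,dt\,dx$.

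Next I would split this global integral as $\iint_{R_Q^*} + \iint_{\reu \setminus R_Q^*}$, where $R_Q^* \subsetneq R_Q$ is a concentric subbox (essentially the Carleson box over $\theta' Q$) containing all the cones above $\theta Q$ used in the square function. On $R_Q^*$, writing $v = u - w$ with $w := u - v$ a solution whose boundary data $(1-\eta)f$ vanishes on $\theta Q$, the inequality $|\nabla v|^2 \leq 2|\nabla u|^2 + 2|\nabla w|^2$ produces the desired $\iint_{R_Q}|\nabla u|^2 t$ contribution plus an error in $|\nabla w|^2$; since $w$ vanishes on $\theta Q$, boundary Caccioppoli converts this error into a weighted $L^2$-estimate of $w$ over a slight enlargement, which via the maximum principle and boundary H\"older regularity connects back to values of $u$. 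For the far region $\reu \setminus R_Q^*$, the boundary data of $v$ is supported in $\theta' Q$, so boundary H\"older regularity for $L$-solutions (De Giorgi--Nash--Moser) gives polynomial decay of $v$ on dyadic annuli around $Q$, and combined with Caccioppoli this produces summable bounds on $\iint|\nabla v|^2 t$ over each annulus.

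The main obstacle I anticipate is replacing the trivial bound $\|u\|_{L^\infty(\reu)}^2$ --- which arises naively via the maximum principle in the error estimates --- by the sharper $\sup_{K_Q}|u|^2$ appearing in \eqref{eq1.NSloc}. A natural resolution is to perform the entire construction with $u$ replaced by $u - u(X^*)$ for a fixed reference point $X^* \in K_Q$: since $u - u(X^*)$ is still an $L$-solution, the $L^\infty$ quantities that control the errors become oscillations of $u$ measured from the interior point $X^*$, and interior-to-boundary comparison (with comparison balls inside $K_Q$) converts these oscillations into factors of $\sup_{K_Q}|u - u(X^*)| \leq 2\sup_{K_Q}|u|$. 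The key technical challenge is making this comparison precise, especially in the far-region bound, where the decay rate of $v$ must be expressed in terms of the oscillation of $u$ rather than its global $L^\infty$ norm.
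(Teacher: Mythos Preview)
Your overall strategy --- cut off the boundary data, solve globally, apply \eqref{eq1.NS}, and then split the resulting global square function into near and far pieces --- is natural but runs into a genuine obstacle that your proposed fix does not resolve. In the near region you end up needing to control $\iint_{R_Q^*}|\nabla w|^2\,t$, where $w$ solves $Lw=0$ in $\reu$ with data $(1-\eta)f$; the only a priori bound on $w$ comes from the maximum principle, giving $\|w\|_{L^\infty(\reu)}\le \|u\|_{L^\infty(\rn\times\{0\})}$, and similarly the amplitude controlling the far-region decay of $v$ is governed by boundary values of $u$ on $\theta'Q$. These are \emph{boundary} quantities, not interior ones. Replacing $u$ by $u-u(X^*)$ does not help: you would need to bound the oscillation of $u$ on a boundary set (e.g.\ $(\theta'Q\setminus\theta Q)\times\{0\}$, or all of $\rn$) by its oscillation on a compact interior set $K_Q$, and that inequality goes the wrong way --- interior values are averages of boundary values, not conversely. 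So the replacement $\|u\|_\infty\to\sup_{K_Q}|u|$ cannot be achieved by this route.

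The paper avoids this by a different decomposition. It first restricts to a \emph{bounded} Lipschitz subdomain $\Omega\subset R_Q$ whose boundary consists of a ``bottom'' piece lying in $\partial\reu$ and a ``compact'' piece $K\subset\subset R_Q$. Splitting $u=u_1+u_2$ in $\Omega$ with $u_2$ carrying only the bottom data, the maximum principle in $\Omega$ gives $\sup_\Omega|u_1|\le\sup_K|u|$ --- this is precisely where $\sup_{K_Q}|u|$ enters. One then multiplies $u_2$ by a cutoff supported in $\Omega$, extends to a Lipschitz graph domain $\Omega_\psi$, and applies the global $N<S$ estimate there; the resulting inhomogeneous terms live on compact sets $F\subset\subset R_Q$, and the far-field decay is handled by a Serrin--Weinberger type lemma (Lemma~\ref{l4.8} in the paper) proved by iterating a barrier argument with the fundamental solution. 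Note also that the decay you invoke for the far region is not simply De Giorgi--Nash--Moser boundary H\"older continuity: one needs $|v(X)|\lesssim (|X|/\ell(Q))^{-(n-1+\nu)}$ globally, and in the non-symmetric setting this requires the separate argument of Lemma~\ref{l4.8}.
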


\begin{remark} \label{remark1.16}
We note that our proof of Theorem \ref{th2} (cf. Section \ref{sNS} below)
will actually show something stronger, namely, that \eqref{eq1.NSloc} holds with the left hand side replaced by $\fint_{\theta Q} N_{*,Q}(u)^2\, dx $, where $N_{*,Q}$ is a truncated non-tangential
maximal operator, defined with respect to cones that have been truncated at height $\approx \ell(Q)$.
\end{remark}

We note that Theorem \ref{th1}, the global $N<S$ bound \eqref{eq1.NS},
and Theorem \ref{th2},  imply 
generalizations of themselves.    These respective generalizations may be summarized as follows.
\begin{corollary}\label{c2} Let $L$ be as above, let $\Omega_\psi$ be a Lipschitz graph domain (cf. \eqref{eq1.2*}), and suppose that $Lu=0$ in $\Omega_\psi$.  Then for every $p\in (0,\infty)$, we have
\begin{equation}\label{eq1.10}
\int_{\po_\psi}S_\psi(u)^p d\sigma \lesssim \int_{\po_\psi}N_{*,\psi}(u)^p d\sigma\,,
\end{equation}
and
\begin{equation}\label{eq1.11}
\int_{\po_\psi}N_{*,\psi}(u)^p d\sigma \lesssim \int_{\po_\psi}S_\psi(u)^p d\sigma\,,
\end{equation}
where the implicit constants depend upon $n$, $p$, ellipticity, and $\|\nabla\psi\|_\infty$.
Moreover, for $0<\theta<1$, if $0\leq\psi(x)\leq \ell(Q)/8$ in $Q$, 
and if $u \in L^\infty$ is a solution of $Lu=0$ in $\Omega_\psi$, 
then there is a set
$K^\psi_Q=K_Q^\psi(\theta)\subset\subset T_Q\cap\Omega_\psi$, with 
$\dist\big(K^\psi_Q, \partial(T_Q\cap\Omega_\psi)\big)\approx\ell(Q)$ (depending on $\theta$ and $\|\nabla \psi\|_\infty$), such that
\begin{multline}\label{eq1.12}
\fint_{\theta Q}|u(x,\psi(x))|^2 dx \\[4pt]
\leq C_{\theta}
\left( \frac1{|Q|}\iint_{T_Q\cap\Omega_\psi} |\nabla u(x,t)|^2 \,\Big(t-\psi(x)\Big)\, dt dx 
\,+\,\sup_{K^\psi_Q} |u|^2 \right)\,,
\end{multline}
where $C_{\theta}$ depends also 
upon $n$, ellipticity, and the Lipschitz constant of $\psi$.
\end{corollary}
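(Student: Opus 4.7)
The plan is to flatten the Lipschitz graph $\po_\psi$ to $\rn$ via a bi-Lipschitz pullback that preserves $t$-independence, and then invoke the three half-space results already in hand. I would use $\rho:\reu\to\Omega_\psi$ defined by $\rho(x,t):=(x,t+\psi(x))$, with inverse $\rho^{-1}(y,s)=(y,s-\psi(y))$; both maps are Lipschitz with constants depending only on $\|\nabla\psi\|_\infty$, the Jacobian satisfies $|\det D\rho|\equiv 1$, and crucially $D\rho$ itself is $t$-independent. Setting $v:=u\circ\rho$, a standard chain-rule computation shows that $v$ is a weak solution of $\widetilde{L}v := -\dv(\widetilde{A}\nabla v)=0$ in $\reu$, where
$$\widetilde{A}(x) := (D\rho(x))^{-1} A(x)(D\rho(x))^{-T}.$$
Since $A$ and $D\rho$ both depend only on $x$, so does $\widetilde{A}$, and uniform ellipticity of $\widetilde{A}$ follows from that of $A$ together with the two-sided bounds on $D\rho^{\pm 1}$, at the cost of a constant depending on $\|\nabla\psi\|_\infty$. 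This is the one substantive step; once it is in place, Theorem \ref{th1}, the estimate \eqref{eq1.NS}, and Theorem \ref{th2} all apply directly to $v$ and $\widetilde{L}$.

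Next I would set up the geometric dictionary. Under $\rho^{-1}$, the adapted cone at a point $(x_0,\psi(x_0))\in\po_\psi$ is sandwiched between two standard cones at $x_0\in\rn$ whose apertures differ only by factors controlled by $\|\nabla\psi\|_\infty$. Combined with the pointwise comparison $|\nabla v|\approx|\nabla u\circ\rho|$, with $|\det D\rho|=1$, and with $d\sigma\approx dx$ on $\po_\psi$, this yields $S_\psi(u)(x_0,\psi(x_0))\approx S(v)(x_0)$ and $N_{*,\psi}(u)(x_0,\psi(x_0))\approx N_*(v)(x_0)$, after absorbing a harmless aperture shift via the aperture-independence of the $L^p$ norms noted in the paper. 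The bounds \eqref{eq1.10} and \eqref{eq1.11} then follow from Theorem \ref{th1} and \eqref{eq1.NS} applied to $v$.

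For the local estimate \eqref{eq1.12} I would apply Theorem \ref{th2} to $v$ on the same cube $Q$. Because $0\le\psi\le\ell(Q)/8$ on $Q$, the image $\rho^{-1}(T_Q\cap\Omega_\psi)$ contains the short Carleson box $R_Q=Q\times(0,\ell(Q)/2)$, so the substitution $s=t-\psi(x)$ converts the weighted gradient integral over $R_Q$ for $v$ into the corresponding integral over $T_Q\cap\Omega_\psi$ for $u$ with weight $t-\psi(x)$, up to a constant depending on $\|\nabla\psi\|_\infty$. Taking $K_Q^\psi:=\rho(K_Q)$ preserves the interior-distance condition $\dist(K_Q^\psi,\partial(T_Q\cap\Omega_\psi))\approx\ell(Q)$ up to the same constants, and $v(x,0)=u(x,\psi(x))$ identifies the boundary traces. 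The main obstacle is genuinely only the first step—confirming that $\widetilde{A}$ remains $t$-independent and uniformly elliptic, which is what makes the half-space theorems applicable at all; everything after that is routine bookkeeping of cones, Carleson boxes, and a Lipschitz change of variables.
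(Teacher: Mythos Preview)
Your approach is exactly the one taken in the paper: pull back by $\rho(x,t)=(x,t+\psi(x))$, observe that the resulting matrix $\widetilde A$ is again real, bounded, uniformly elliptic and $t$-independent (with constants now depending on $\|\nabla\psi\|_\infty$), and apply the half-space results to $v=u\circ\rho$. Your geometric dictionary and your treatment of \eqref{eq1.10} and \eqref{eq1.12} are correct and match the paper.

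There is, however, a genuine gap in your derivation of \eqref{eq1.11}. You write that \eqref{eq1.11} ``for every $p\in(0,\infty)$'' follows from \eqref{eq1.NS} applied to $v$, but \eqref{eq1.NS} is \emph{only} the $L^2$ estimate. The pullback therefore gives \eqref{eq1.11} directly only for $p=2$. The paper flags exactly this point: to pass to general $p$ one uses the \emph{local} $N<S$ estimate of Theorem~\ref{th2} (in the strengthened form of Remark~\ref{remark1.16}, with $N_{*,Q}$ on the left), pulls it back to $\Omega_\psi$, and then runs a standard good-$\lambda$ argument to upgrade the local $L^2$ comparison to a global $L^p$ bound for all $0<p<\infty$. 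Without this step your argument proves \eqref{eq1.11} only at $p=2$.
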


Here, $d\sigma = d\sigma(x):= \sqrt{1+|\nabla\psi(x)|^2} dx \approx dx$ 
denotes the standard surface measure on the Lipschitz graph
$\po_\psi$.    The square function $S_\psi(u)$ and non-tangential maximal function $N_{*,\psi}(u)$ 
are defined on $\Omega_\psi$ as follows:
\begin{equation}\label{eq1.squarelip}
S_\psi(u)(x):=\left(\iint_{\Gamma(x)}|\nabla u(Y)|^2 \frac{dY}{\delta(Y)^{n-1}}\right)^{1/2}\,,
\end{equation}
\begin{equation}\label{eq1.ntlip}
N_{*,\psi}(u)(x):=\sup_{\Gamma(x)}|u(Y)|\,,
\end{equation}
where $\delta(Y):= \dist(Y,\po_\psi)$, and where $\Gamma(x)\subset\Omega_\psi$ is a vertical cone
with vertex at $x\in\po_\psi$, of sufficiently narrow aperture (depending upon
the Lipschitz constant of  $\psi$) that $\delta(Y)\approx |Y-x|,\, \forall Y\in \Gamma(x)$.

\begin{proof}[Sketch of proof of Corollary \ref{c2}]  Since 
Theorem \ref{th1}, Theorem \ref{th2}, and \eqref{eq1.NS} hold (or will be shown to hold),
for the entire class of $t$-independent divergence form operators as described above, 
one may reduce matters to the case
that $\psi\equiv 0$ (i.e., the case that $\Omega_\psi =\reu$), by  
``pulling back" under the mapping
$(x,t)\to(x,t+\psi(x))$, which preserves the class of $t$-independent elliptic operators under consideration, and maps $\Omega_\psi\to\reu$, and $\po_{\psi}\to \partial\reu$, bijectively.
In the case of \eqref{eq1.11}, the pullback mechanism and \eqref {eq1.NS}
yield directly only the case $p=2$;  however, since we also establish local ``$N<S$"
estimates (cf. Remark \ref{remark1.16}), we may obtain the full range of $p$ in \eqref{eq1.11} by a well known ``good-lambda" argument.
We omit the details, which are standard.
\end{proof}

Using the local estimates \eqref{eq1.9} and \eqref{eq1.12}, we shall deduce the following
theorem.
Given a cube $Q\in\rn$, we let $x_Q$ denote the center of $Q$, and let
$X_Q:=(x_Q,\ell(Q))$ be the ``Corkscrew point" relative to $Q$.  For $X\in \reu$, 
and an elliptic operator $L$ as above, we
let  $\hm^{X}$ denote the $L$-harmonic measure at $X$.
\begin{theorem}\label{th3} Let $L$ be a divergence form elliptic operator as above, with
$t$-independent coefficients.  Then there is a $p<\infty$ such that the Dirichlet problem
$D_p$ is well-posed; equivalently, for each cube $Q\subset \rn$, the $L$-harmonic measure
$\hm^{X_Q} \in A_\infty(Q)$, with constants that are uniform in $Q$.  
\end{theorem}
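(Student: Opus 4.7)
The plan is to combine the Carleson measure estimate of Corollary~\ref{c1} with the method of $\epp$-approximability, and then pass from $\epp$-approximability to the $A_\infty$ property by a Green's function/duality argument in the spirit of Dahlberg and of Kenig--Koch--Pipher--Toro. Since the class of $t$-independent operators is preserved under the pullback $(x,t)\mapsto(x,t+\psi(x))$, and since by translation invariance one may center $Q$ at the origin, it suffices to prove $\hm^{X_Q}\in A_\infty(Q)$ uniformly for cubes $Q\subset\rn$.

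\textbf{Step 1 ($\epp$-approximability).} Given $\epp>0$ and a bounded solution $u$ of $Lu=0$ with $\|u\|_{L^\infty(\reu)}\leq 1$, the first task is to construct $\vp=\vp_\epp$ satisfying $\|u-\vp\|_{L^\infty(\reu)}\leq\epp$ and such that $|\nabla\vp(X)|\,dX$ is a Carleson measure on $\reu$ with norm $\leq C_\epp$ independent of $u$. This is the Varopoulos--Garnett sawtooth/stopping-time scheme: one performs a corona-type decomposition of each Carleson box into stopping-time regions on which $u$ oscillates by less than $\epp$ and on which the accumulated Carleson mass of $|\nabla u|^2\,t\,dt\,dx$ is small, defines $\vp$ as a smoothly averaged value of $u$ on each such region, and patches across scales with a partition of unity. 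The Carleson bound of Corollary~\ref{c1}, combined with Caccioppoli's inequality and interior H\"older continuity of $L$-solutions, drives the packing estimate for the stopping-time cubes and hence the Carleson norm of $|\nabla\vp|\,dX$.

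\textbf{Step 2 ($\epp$-approximability implies $A_\infty$).} Fix $Q$ and a Borel $F\subset Q$, and set $u(X):=\hm^X(F)$, so that $0\leq u\leq 1$, $Lu=0$, and $u(X_Q)=\hm^{X_Q}(F)$. Apply Step~1 with a small fixed $\epp$ to produce $\vp$; since $u$ has non-tangential limit $\chi_F$ a.e., the non-tangential limit of $\vp$ is at most $\epp$ a.e.\ on $Q\setminus F$. Test the adjoint Green's identity $L^*G^*(X_Q,\cdot)=\delta_{X_Q}$ against $\vp$ multiplied by a cutoff adapted to $T_Q$. This produces an identity expressing $\vp(X_Q)$ as a boundary integral $\int\vp\,d\hm^{X_Q}$ plus an interior error of the form $\iint A^*\nabla G^*(X_Q,\cdot)\cdot\nabla\vp\,dX$ (with cutoff). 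The boundary term is at most $\hm^{X_Q}(F)+\epp$, while the interior error is controlled by Cauchy--Schwarz using the Carleson norm of $|\nabla\vp|\,dX$ together with standard $L^2$ energy bounds for $\nabla G^*$, yielding an upper bound of the form $C_\epp(|F|/|Q|)^{1/2}\hm^{X_Q}(Q)+O(\epp)$. Rearranging and iterating this estimate yields the quantitative $A_\infty(Q)$ condition of Definition~\ref{def1.ainfty}, which by the $A_\infty$/Dirichlet equivalence quoted from \cite{Ke} gives solvability of (\ref{Dp}) for some $p<\infty$.

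The principal obstacle is Step~1. In the absence of a mean-value property or explicit Poisson kernel, every ingredient of the sawtooth construction (interior oscillation control, matching of values across stopping-time cubes, and the packing estimate for the discontinuity set of $\vp$) must be driven purely by energy/Moser/De~Giorgi estimates and by the Carleson measure bound newly furnished by Corollary~\ref{c1} via Theorem~\ref{th1}. This is precisely the ingredient that was unavailable for $n\geq 2$ prior to the present paper; once it is in place, Step~2 follows a standard template.
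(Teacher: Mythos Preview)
Your outline has a genuine gap in Step~1: you invoke only the Carleson measure estimate of Corollary~\ref{c1} (the $S<N$ direction) as the engine of the stopping-time construction, but this is not enough to drive the packing of the generation cubes. In the paper's argument (Theorem~\ref{thm.a} and in particular Lemma~\ref{lma.1}), the stopping cubes are selected by the condition $|u(P_Q)-u(P_{Q'})|\geq \epp/10$, and the key packing estimate in Case~\ref{case.2} is obtained by applying the \emph{local $N<S$ bound} of Theorem~\ref{th2} (stated as Estimate~\ref{est1}, equation~\eqref{assumption}) on the sawtooth region $\Omega_+$ whose Lipschitz boundary passes through the tops of the ``uncovered'' stopping cubes. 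This converts the lower bound on the boundary oscillation into an upper bound by the square function over the complementary region, which is what makes the geometric series in Lemma~\ref{lma.2} converge. Caccioppoli and De~Giorgi--Nash interior estimates alone do not furnish this step: they control oscillation on a Whitney block by the energy on a dilate of that block, but they do not relate oscillation along a Lipschitz graph to the square-function mass \emph{above} that graph, which is exactly what Estimate~\ref{est1} does. So you need both Corollary~\ref{c1} (Estimate~\ref{est2}) and the local $N<S$ estimate of Theorem~\ref{th2}/Corollary~\ref{c2} (Estimate~\ref{est1}); the latter is the reason the paper works so hard in Section~\ref{sNS}.

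Your Step~2 sketch also does not match the actual mechanism. Pairing $\nabla G^*$ with $\nabla\vp$ via Cauchy--Schwarz cannot exploit the \emph{$L^1$} Carleson control on $|\nabla\vp|\,dX$; one would need $L^2$ control, which is not available. The paper does not carry out Step~2 at all but simply cites \cite[Theorem~2.3]{KKPT}, whose proof proceeds instead by comparing $u(X_Q)=\hm^{X_Q}(F)$ with $\vp(X_Q)$, using the $L^1$ Carleson bound on $\nabla\vp$ to control the variation of $\vp$ along vertical segments, and iterating to extract a quantitative density-type $A_\infty$ condition. If you want to reprove that implication, you should follow that template rather than a Green's-function energy argument.
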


The proof of Theorem \ref{th3} will be deduced from \eqref{eq1.9} and \eqref{eq1.12}
via the method of ``$\epp$-approximability".  We defer until Section \ref{s-epp-app} a detailed discussion
of this notion, but we mention at this point that
it was introduced by Garnett \cite{G}, who showed that the property is enjoyed by 
bounded harmonic functions in the plane.  An alternative proof of Garnett's result was provided by Varopoulos \cite{V}.  A third proof,
which extended to bounded harmonic functions in all dimensions, was found by Dahlberg \cite{D},
who made the connection with square function estimates
on bounded Lipschitz domains.
In \cite{KKPT}, it was observed by the second and fourth named authors of this paper, 
jointly with Koch and Toro, that Dahlberg's argument may be carried over to 
bounded solutions of general divergence form elliptic operators, in the presence of
square function estimates on bounded Lipschitz domains;  moreover, these authors 
showed that $\epp$-approximability, in turn, implies that harmonic measure belongs to $A_\infty$ with respect to surface measure on the boundary.  In the present paper, we invoke the latter result of
\cite{KKPT} ``off-the -shelf":  the essence of the proof of our Theorem \ref{th3} is to
show that our solutions are $\epp$-approximable.  Having done this (in Section
\ref{s-epp-app}),  we then obtain immediately the conclusion
of Theorem \ref{th3}, by \cite[Theorem 2.3]{KKPT}.
We remark that
our approach here, although it relies upon ideas from the proofs in both \cite{G} and \cite{D}, 
does {\it not}, in contrast to the proofs of $\epp$-approximability in \cite{D} and \cite{KKPT}, 
require $S/N$ estimates on  Lipschitz sub-domains of arbitrary orientation, but rather only  local $S/N$ estimates on Lipschitz graph domains $\Omega_\psi$ as in
\eqref{eq1.2*}, for which the fixed vertical (i.e., $t$) direction is transverse to $\po_\psi$.  This refinement of the $\epp$-approximability method is significant for us, because it is not clear how 
(or whether) one could exploit the $t$-independence of our coefficients to obtain $S/N$ estimates
on Lipschitz domains with other orientations (i.e., for which the $t$-direction may fail to be 
transverse to the boundary).

Finally, we note that, by \cite{H} and \cite{AAAHK}, 
Theorem \ref{th3} has as an immediate corollary that the layer potentials
associated to any $t$-independent operator $L$ as above, and to its complex perturbations,
are $L^2$ bounded.  More precisely, let $\mathcal{E}_L(x,t,y,s)$ be the fundamental solution for $L$,
and  define the single layer potential operator by 
 \begin{equation}
\label{eq1.5}S^L_{t}f(x) :=\int_{\mathbb{R}^{n}}\mathcal{E}_L(x,t,y,0)\,f(y)\,dy, \,\,\, t\in \mathbb{R}
\end{equation}
\begin{corollary}\label{c3} Let $L=-\div A(x) \nabla$ be a $t$-independent divergence form
elliptic operator, where $A$ is real, or more generally, where $A$ has complex entries and 
there is a real, elliptic, $t$-independent matrix $A'(x)$ such that $\|A-A'\|_{L^\infty(\rn)}<\eps_0$.
If $\eps_0$ is small enough, depending only upon dimension and ellipticity, then
$$\sup_{t> 0}\int_{\rn}|\nabla_{x,t} S_t^L f(x)|^2\, dx  
+\iint_{\reu} |\nabla_{x,t}\partial_t S_t^L f(x)|^2\, \frac{ dx dt}{t} \leq C \int_{\rn} |f(x)|^2\, dx\,,$$
where $C$ depends upon $n$, ellipticity, and $\|A-A'\|_{L^\infty(\rn)}$.
\end{corollary}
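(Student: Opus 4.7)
The plan is to deduce the corollary from Theorem \ref{th3} by invoking two ``off-the-shelf'' results already in the literature: the layer potential estimates of \cite{H} for real $t$-independent operators, and the analytic perturbation theory of \cite{AAAHK} for complex $L^\infty$ perturbations. No genuinely new argument is required beyond what we have already established in Theorems \ref{th1}--\ref{th3} and Corollary \ref{c1}.

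First I would reduce to the case of a real, $t$-independent matrix $A'$. By \cite{AAAHK}, the set of coefficient matrices for which the single layer potential enjoys the stated $L^2$ bounds is open in the $L^\infty$ topology, with quantitative control: once the bounds are known for $A'$, they persist for all complex $A$ with $\|A-A'\|_{L^\infty(\rn)}<\eps_0$, where $\eps_0$ depends only on dimension and ellipticity, and the bound degrades in a controlled way in $\|A-A'\|_\infty$. Thus it suffices to establish the stated bounds for real, $t$-independent $A'$.

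For such $A'$, I would apply Theorem \ref{th3} simultaneously to $L'=-\dv A'\nabla$ and to its adjoint $(L')^*=-\dv (A')^T\nabla$; the latter falls within the same class of real, $t$-independent divergence form elliptic operators, so the theorem applies to both. This yields that $\hm_{L'}$ and $\hm_{(L')^*}$ both belong to $A_\infty$, with uniform constants. Combined with the Carleson measure estimate \eqref{eq1.9} of Corollary \ref{c1}, which we may apply in particular to the bounded $L'$-harmonic function $(x,t)\mapsto \partial_t S^{L'}_t f(x)$ for suitable $f$, these are precisely the hypotheses under which the results of \cite{H} produce the square function bound $\iint_{\reu}|\nabla_{x,t}\partial_t S^{L'}_t f|^2\,dxdt/t \lesssim \|f\|_{L^2(\rn)}^2$, together with the uniform-in-$t$ bound $\sup_t\|\nabla_{x,t}S^{L'}_t f\|_{L^2(\rn)} \lesssim \|f\|_{L^2(\rn)}$; the latter is obtained from the square function bound via a standard duality/integration-by-parts argument that exploits the $A_\infty$ property of $\hm_{(L')^*}$ to control the non-tangential trace of $\nabla S^{L'}_t f$.

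The main obstacle is not technical but expository: one must carefully align the precise hypotheses used by \cite{H} and \cite{AAAHK}---stated there in a general framework---with the conclusions we have obtained in Theorems \ref{th1}--\ref{th3} and Corollary \ref{c1}. Once this dictionary is in place, the corollary follows by direct invocation of the cited results, and we omit the routine details.
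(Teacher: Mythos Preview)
Your approach is essentially the same as the paper's: deduce the real case from Theorem~\ref{th3} via \cite{H}, and then pass to complex perturbations via \cite{AAAHK}. Two small points of alignment are worth noting. First, the paper records that the perturbation step uses not only the proof of \cite[Theorem 1.12]{AAAHK} but also the global $N<S$ bound \eqref{eq1.NS} of \cite{AA}; you should not suppress this ingredient, since the analytic perturbation machinery in \cite{AAAHK} is set up in a framework that takes such square function/non-tangential control as part of the input. Second, your invocation of Corollary~\ref{c1} applied to $\partial_t S_t^{L'} f$ as a ``bounded $L'$-harmonic function for suitable $f$'' is a bit loose; in the paper's logic the real case is obtained directly from Theorem~\ref{th3} and \cite[Theorem 3.1 and its proof]{H}, where the $A_\infty$ property of harmonic measure is exactly the hypothesis of the local $Tb$ theorem in \cite{H}, and no separate appeal to Corollary~\ref{c1} is needed at this stage.
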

The case that $A$ has real entries follows immediately from Theorem \ref{th3} and 
\cite[Theorem 3.1 and its proof]{H}.  
In turn, the perturbation result follows from the proof of \cite[Theorem 1.12]{AAAHK}, plus the global $N<S$ bound of \cite{AA} (that is, \eqref{eq1.NS} above).  We omit the details.

\subsection{Historical comments, and remarks on the proofs of the theorems.}
In the case of $t$-independent symmetric matrices, all of the results stated above have been known for some time.  In that case, solvability of the Dirichlet problem $D_2$ was proved in \cite{JK}, by means of a so-called ``Rellich identity" obtained via integration by parts.    In turn, given the solvability result,
$S/N$ bounds follow by the main theorem in \cite{DJK} (thus, for symmetric matrices,
the logic of our proof strategy in the present paper, in which we establish 
$S/N$ bounds first, and then deduce solvability, was reversed).
The integration by parts argument used to prove the Rellich identity 
relies heavily on self-adjointness, and thus is inapplicable to the
non-symmetric case treated here.  Let us further point out that self-adjointness plays another role:
in the case of real symmetric coefficients, one obtains $L^2$ solvability of the Dirichlet problem
(equivalently, that the Poisson kernel satisfies a reverse H\"older inequality with exponent $q=2$),
whereas in the case of non-symmetric coefficients, by the counter-examples of
\cite{KKPT}, one cannot make precise the exponent $p$
for which one has solvability of $D_p$ (equivalently, one cannot specify the reverse H\"older
exponent $q$ enjoyed by the Poisson kernel).  Thus, for non-symmetric operators,
the conclusion that $\hm \in A_\infty$ is best possible.

Our main results, Theorems \ref{th1}, \ref{th2} and \ref{th3}, are extensions 
to $\reu$, $n\geq 2$, of analogous results of \cite{KKPT}, which were valid in the plane
(i.e., $n=1$).  The proof of Theorem \ref{th2} will follow that of its antecedent,
Theorem 3.18 of \cite{KKPT}, very closely, with some minor changes required by
the higher dimensional setting.    As noted above, the proof of Theorem \ref{th3} is based on
the ``$\epp$-approximability" arguments of \cite{G}, \cite{D} and \cite{KKPT}, 
in which $S/N$ estimates on Lipschitz sub-domains is used to obtain a certain 
approximability property of solutions, and in turn, to deduce solvability of $D_p$ for some finite $p$. 
In this paper, we present a
non-trivial refinement of the method, which requires us to establish (local) comparability of $S$ and $N$
only on Lipschitz graph domains, for which the $t$-direction is transverse to the boundary.

The  $S<N$ estimates proved in \cite{KKPT} relied
on the fact that in the plane, a $2\times 2$ $t$-independent matrix can be 
triangularized by ``pushing forward" to an appropriate Lipschitz graph domain $\Omega_1$.
In turn, one can prove square function estimates 
for operators with upper triangular coefficient matrices,
by a standard integration by parts argument,
since for such operators, the function $v(x,t) \equiv t$ is an adjoint null solution.
Having triangularized the matrix, this integration by parts may be carried out in the half-plane 
$\RR^2_+$, and even in Lipschitz graph domains, after ``pulling back" to the half-space
with the Dahlberg-Kenig-Stein change of variable.  

In higher dimensions, this approach fails, but the
proof of Theorem \ref{th1} exploits a more general principle in the same spirit, namely,
that by pushing forward to the domain above the graph of an appropriate
$W^{1,2+\eps}$ function $\vp$, which arises in a (local) $L$-adapted
Hodge decomposition of
the coefficient vector ${\bf c}:=(A_{n+1,j})_{1\leq j\leq n}$, one may put the coefficient
matrix into a better form, in which the vector ${\bf c}$ is replaced by
a divergence free vector.
In turn, this observation may be combined with an $L$-adapted variant of the
Dahlberg-Kenig-Stein pullback mapping, along with the solution of the Kato problem
\cite{HLMc}, \cite{AHLMcT}, 
to carry out a refined version of the classical integration by parts argument.  Of course, some care must be taken with the push forward/pullback mapping based on $\vp$, since the latter is merely
$W^{1,2+\eps}$, and not Lipschitz.

\subsection{Notation}
In the sequel, we shall use the notational convention that a generic constant $C$, 
as well as the  constants implicit in the expressions $a\lesssim b, a\approx b, a\gtrsim b$,
shall be allowed to depend on dimension, ellipticity, the aperture of the cones used
in the definition of $S$ and $N_*$ (with one exception, to be noted momentarily), 
and, when working in Lipschitz graph domains,
the Lipschitz constant, 
unless there is an explicit qualification to the contrary.    As regards constants depending
on the aperture of the cones, in ``Step 2" of the proof of Theorem \ref{th1}, we shall
consider non-tangential maximal functions taken with respect to a narrow aperture $\eta$,
and we shall indicate explicitly any dependence on $\eta$,
of the norms of these maximal functions (thus, if no dependence on $\eta$ is indicated,
there is none, or we have reached a stage of the argument where such dependence is irrelevant;  cf. \eqref{eq2.20}-\eqref{eq2.21} and Subsection \ref{ss3.2} below.)
We shall sometimes write $X=(x,t)$ to denote points in $\ree$,
and we let $B(X_0,r):=\{X\in\ree: |X-X_0|<r\}$ denote the standard
Euclidean ball in $\ree$.  We shall denote cubes in $\rn$ and in $\ree$, respectively,
by $Q\subset \rn$ and $I\subset \ree$.

\section{Proof of Theorem \ref{th1}:  Preliminaries for ``$S<N$"}

Let $A(x)$ be an $(n+1)\times(n+1)$, real, elliptic,
$L^\infty$, $t$-independent and possibly non-symmetric matrix, as in the introduction. 
We represent the matrix $A$ schematically as follows:

\begin{equation}
A= \left[\begin{array}{c|c}
 A_\| & {\bf b}\\[4pt]
\hline {\bf c} & d\end{array}\right],\label{eq1.1}\end{equation}
where $d:= A_{n+1,n+1}$, ${\bf b}:= (A_{i,n+1})_{1\leq i\leq n},\,{\bf c}:= (A_{n+1,j})_{1\leq j\leq n},$
and $A_\|$ denotes the $n\times n$ submatrix of $A$ with entries $(A_\|)_{i,j} := A_{i,j},\,1\leq j\leq n$.
Given any matrix $B = (B_{i,j})$ (no matter its dimensions), we let $B^* = (B_{j,i})$ denotes its 
adjoint (i.e. transpose, since our coefficients are real).  Thus,
\begin{equation}
A^*= \left[\begin{array}{c|c}
 A_\|^* & {\bf c}\\[4pt]
\hline {\bf b} & d\end{array}\right].\label{eq1.2}\end{equation}

Eventually, we shall establish ``good-lambda" estimates for square functions
of solutions of the equation $Lu=0$, and thus, as usual, we shall work locally, on a 
given cube $Q\subset\rn$.  
Since our coefficients clearly belong to $L^p_{loc}$ for any finite $p$, having fixed a cube $Q$,
we can make a $W^{1,2+\eps}$ Hodge decomposition 
with sufficiently small $\eps>0$ (see, e.g., \cite{AT}), 
and write
\begin{equation}\label{eq1.hodge}
{\bf c}1_{5Q} = - A_\|^* \nabla \varphi + {\bf h}, \qquad {\bf b}1_{5Q}
=  A_\| \nabla \tilde{\varphi} + \widetilde{\bf h}\,,
\end{equation}
where $\varphi,\tilde{\vp}\in  W_0^{1,2+\eps}(5Q)$, and
${\bf h},\widetilde{\bf h}$ are divergence free and supported in $5Q$,
and where
\begin{align}\label{eq1.hodge*}
\fint_{5Q}\Big( |\nabla \varphi(x)| 
+|{\bf h}(x)|\Big)^{2+\eps}dx \,\leq\, C\fint_{5Q} |{\bf c}(x)|^{2+\eps} dx\, &\leq C\\[4pt]\label{eq1.hodge**}
\fint_{5Q}\Big( |\nabla \tilde{\varphi}(x)|
+|\widetilde{\bf h}(x)|\Big)^{2+\eps} dx\, \leq\, C\fint_{5Q} |{\bf b}(x)|^{2+\eps} dx\, &\leq C\,.
\end{align}

We define an $n$-dimensional divergence form operator 
$$L_\|:= -\dv_x (A_\| \nabla_x)\,,$$
and let $\p_t:= e^{-t^2 L_\|}$ and $\p^*_t:= e^{-t^2 L^*_\|}$ denote, respectively,
the heat semigroup associated to $L_\|$ and to its adjoint $L_\|^*$, but endowed
with ``elliptic" homogeneity (thus,  $t$ has been squared).

In the sequel,
we shall want to consider the pullback of $L$ under the mapping
\begin{equation}\label{eq2.6}
\rho(x,t):= \big(x,\tau(x,t)\big):= \left(x,t-\vp(x)+\pe\vp(x)\right)\,,
\end{equation}
where $\eta>0$ is a small but fixed number to be chosen, and $\vp$ is as in
\eqref{eq1.hodge}, and has been extended to all of $\rn$ by setting $\vp\equiv 0$ in $\rn\setminus 5Q$.
A computation shows that if $u$ is a solution of $Lu=0$ in $\reu$,
then $u_1 := u\circ \rho$ is a solution of $L_1 u_1=0$
(at least formally),
where $L_1:= -\dv(A_1\nabla)$, and, for $J$ and $\bf{p}$ to be defined momentarily,

\begin{equation}
A_1:= \left[\begin{array}{c|c}
 J \, A_\| & {\bf b} +A_\| \nabla_x \varphi-A_\|\nabla_x \pe\vp\\[7pt]
\hline\\ {\bf h}-A^*_\| \nabla_x \pe\vp& \frac{\langle A \,{\bf p},{\bf p}\rangle}J 
\end{array}\right].\label{eq2.2}
\end{equation}

\noindent
Here, ${\bf h}$ is the divergence free vector in the Hodge decomposition
\eqref{eq1.hodge}, and we define $J$ and {\bf p} as follows:
\begin{equation}\label{eq2.jdef}
J(x,t):= 1+\partial_t \pe\vp(x)\,,
\end{equation}
is the Jacobian of the change of variable $t\to \tau(x,t)$, with $x\in\rn$ fixed,
and
\begin{equation}\label{eq2.pdef}
{\bf p}(x,t):= (\nabla_x \tau(x,t),-1)=(\nabla_x\pe\vp(x)-\nabla_x\vp(x),-1)\,.
\end{equation}
Let us make precise our statement that $L_1u_1=0$. 
In fact, in the sequel, we shall 
consider $u_1$ in a
certain sawtooth domain $\Omega_0$ in which the mapping $(x,t)\to\rho(x,t)$ is 1-1, with range contained in $\reu$, and in which
$J(x,t) \approx 1$ (uniformly).
The fact that $L_1u_1=0$ in the sawtooth region then follows from the pointwise identity
\begin{equation}\label{eq2.10}
 A\Big((\nabla u)\circ \rho\Big)\cdot\Big((\nabla v)\circ \rho\Big) \, J = A_1\nabla u_1\cdot \nabla v_1\,,
\end{equation}
for $v \in W^{1,2}(\Omega_0)$, where $v_1:= v\circ \rho$.

We conclude these preliminaries with some estimate for square functions 
and non-tangential maximal functions built from
the ``ellipticized" heat semigroup operators $\p_t$ and $\p_t^*$.
By the solution of the Kato problem \cite{HLMc}, \cite{AHLMcT}, 
we have for every $\alpha >0$ that
\begin{multline}\label{eq1.kato}
\int_{\rn} \iint_{|x-y|<\alpha t}|t \,\p_t\dv_x {\bf f}(y)|^2 \frac{dy dt}{t^{n+1}}dx\\[4pt]\approx\,
\iint_{\reu}|t \,\p_t\dv_x {\bf f}(x)|^2 \frac{dx dt}{t}\, \leq\, C \,\|{\bf f}\|_{L^2(\rn)}^2 \,,
\end{multline}
where the implicit constants depend upon the aperture $\alpha$ (but in fact are uniform
for all $\alpha\leq 1$.
Also, by standard semigroup theory (more precisely,  that
$\p_t = e^{-t^2L_\|/2}e^{-t^2L\|/2}$, and that $t\nabla_{x,t}e^{-t^2L_\|/2}$ is bounded on $L^2(\rn)$, uniformly in $t$;  cf. \cite{Ka}), the latter bounds
imply that 
\begin{multline}\label{eq1.kato*}
\int_{\rn} \iint_{|x-y|<\alpha t}|t^2 \nabla_{x,t}\,\p_t\dv_x {\bf f}(y)|^2 \frac{dy dt}{t^{n+1}}dx\\[4pt]\approx\,
\iint_{\reu}|t^2\nabla_{x,t} \,\p_t\dv_x {\bf f}(x)|^2 \frac{dx dt}{t}\, \leq\, C \,\|{\bf f}\|_{L^2(\rn)}^2 \,.
\end{multline}
Of course, analogous bounds hold for $\p_t^*$.   By a well-known argument of Fefferman and Stein
\cite{FS}, the bounds in \eqref{eq1.kato}-\eqref{eq1.kato*} imply corresponding Carleson 
measure estimates when ${\bf f}\in L^\infty(\rn)$, and thus by tent space interpolation 
\cite{CMS}, we obtain that
\begin{equation}\label{eq1.katop}
\|\A^\alpha_1 {\bf f}\|_{L^p(\rn)}+ \|\A^\alpha_2 {\bf f}\|_{L^p(\rn)}  \leq\, C_{\alpha,p} 
\,\|{\bf f}\|_{L^p(\rn)} \,,
\end{equation}
for every $p\in [2,\infty)$, where
\begin{align}\label{eq1.adef1}
\A^\alpha_1 {\bf f}(x)&\, :=\,
\left( \iint_{|x-y|<\alpha t}|t \,\p_t\dv_x {\bf f}(y)|^2 \frac{dy dt}{t^{n+1}}\right)^{1/2}\\[4pt]\label{eq1.adef2}
\A^\alpha_2 {\bf f}(x)&\, :=\,
\left( \iint_{|x-y|<\alpha t}|t^2 \nabla_{x,t}\,\p_t\dv_x {\bf f}(y)|^2 \frac{dy dt}{t^{n+1}}\right)^{1/2}\,\,.
\end{align}

Trivially, \eqref{eq1.kato}-\eqref{eq1.kato*} also entail $L^2$ bounds for the vertical square functions
\begin{align}\label{eq1.gdef1}
\G_1 {\bf f}(x)&\, :=\,
\left( \int_0^\infty |t\,\p_t\dv_x {\bf f}(x)|^2 \frac{dt}{t}\right)^{1/2}\\[4pt]\label{eq1.gdef2} 
\G_2 {\bf f}(x)&\, :=\,
\left( \int_0^\infty |t^2 \nabla_{x,t}\,\p_t\dv_x {\bf f}(x)|^2 \frac{dt}{t}\right)^{1/2}\,.
\end{align}
The $L^2$ bounds for these vertical square functions may also be extended to $L^p$:
\begin{equation}\label{eq1.katovertp}
\|\G_1 {\bf f}\|_{L^p(\rn)}+ \|\G_2 {\bf f}\|_{L^p(\rn)}  \leq\, C_{p} 
\,\|{\bf f}\|_{L^p(\rn)} \,,
\end{equation}
for every $p\in [2,2+\eps_0)$, with $\eps_0>0$ chosen small enough
 depending on dimension and ellipticity.  
For $\G_1$ the latter fact is a routine consequence of
local H\"older regularity in $x$, of the kernel of $\p_t$, and in fact the $L^p$ bounds hold more generally
for $2\leq p<\infty$;  for $\G_2$, the $L^p$ estimates in the range $2<p< 2+\eps_0$ are essentially due
to Auscher \cite{A}, and in that case the upper endpoint $2+\eps_0$ is best possible.

Clearly,  \eqref{eq1.katop} and \eqref{eq1.katovertp} hold also 
for the analogous operators 
corresponding to $\p^*_t$.

Finally, we note that for $2\leq p< \infty$, 
\begin{align}\label{eq2.19}
\|N^\alpha_*(\partial_t \p_t f)\|_p 
&\,\leq\, C_{\alpha,p} \,\|\nabla_x f\|_p\\[4pt]\label{eq2.20}
\|\eta^{-1}N^\eta_*(\partial_t \Pe f)\|_p 
&\,\leq\,  \,C_p\, \|\nabla_x f\|_p \\[4pt]\label{eq2.21}
\|\widetilde{N}^\eta_*(\nabla_x \Pe f)\|_p
&\,\leq\,  \,C_p\, \|\nabla_x f\|_p 
\end{align}
and similarly for $\p^*_t$, where we shall define
$\widetilde{N}^\eta_*$ momentarily.  
Indeed, since the kernel of the operator $t\partial_t\p_t$ enjoys pointwise
Gaussian bounds, and kills constants, we have 
$$|\partial_t\p_t f(y)|=|\partial_t\p_t (f-f_{x,t})(y)|\leq C_{\alpha} M(\nabla_x f)(x)\,,$$
whenever $|x-y|<\alpha t$, where $f_{x,t}:= 
\fint_{|x-z|<t} f(z) dz$, and where in the last step we have used
a dyadic annular decomposition, the decay of the kernel, a telescoping identity, 
and the $L^1$ Poincare inequality.  
The bound \eqref{eq2.19} now follows immediately.
A slightly more careful version of the same argument,
in which we replace $f_{x,t}$ by $f_{x,\eta t}$, yields \eqref{eq2.20},
since the kernel of $t\partial_t  \Pe$, call it $k_{\eta t}(x,y)$ enjoys the Gaussian estimate
$$|k_{\eta t}(x,y)|\lesssim (\eta t)^{-n} \exp\left(-\frac{|x-y|^2}{\eta^2 t^2}\right)\,.$$
Here, our interest is in the case that $\eta$ is fairly small, so it is important that we have
specified that the aperture
of the cone in \eqref{eq2.20} is equal to $\eta$ (it would of course
also be fine to allow any aperture $\alpha \lesssim \eta$). 
To prove \eqref{eq2.21}, in which
\begin{equation}\label{eq2.22*}
\widetilde{N}^\eta_*(v)(x):= \sup_{(y,t):\,|x-y|<\eta t} \left(\fint_{|y-z|< \eta t} |v(z,t)|^2
dz \right)^{1/2}\,,
\end{equation}
we may argue as in \cite{KP}, using a variant of Caccioppoli's inequality
to obtain a bound in terms of $N_*^{2\eta}(\partial_t \Pe f)$,  $\sup_{t>0}|\partial_t\p_t f|$, and
a tangential gradient on the boundary.    
We omit the details.

 \section{Proof of Theorem \ref{th1}:  Main arguments for ``$S<N$"}\label{sSN}

In this section, we present the main arguments in the proof of Theorem \ref{th1},
in three steps. 
We first show that $S(u)$ is controlled, in $L^p$ norm for $p$ 
sufficiently large, by a vertical square function 
involving only the $t$-derivative of $u$ (plus $N_*(u)$).  We then show that this
vertical square function is controlled by $N_*(u)$, again in $L^p$ norm for $p$ large.
Finally, we shall remove the restriction on $p$.   We will sometimes vary the apertures of our cones,
in the definitions of $S(u)$ and $N_*(u)$, from
one of these steps to the next, but as we have already noted, this is harmless,
as all choices of aperture yield
equivalent $L^p$ norms (\cite{FS}, \cite{CMS}.)  Within each step, 
we shall always maintain a consistent choice of aperture.

 \subsection{Step 1:  $S(u)$ is controlled by a vertical square function of $\partial_t u$.}
 
Set
\begin{equation}\label{eq3.vert}
g(u)(x):= \left(\int_0^\infty |\partial_t u(x,t)|^2\,tdt\right)^{1/2}\,.
\end{equation}
Our goal at this stage is to establish the following ``good-$\lambda$" inequality,
for arbitrary positive $\lambda$, and for all sufficiently small $\gamma$:
\begin{equation}\label{eq3.goodl}
\left|\Big\{x\in Q: S(u)(x)>3\lambda, \left(M\left(g(u)^2 + N_*(u)^2\right)(x)\right)^{1/2}
\leq \gamma \lambda\Big\}\right| \leq\,C\gamma^2 |Q|\,,
\end{equation}
whenever $Q$ is a Whitney cube for the open set $\{S(u)>\lambda\}$.
Here and in the sequel, $M$ denotes the non-centered Hardy-Littlewod maximal operator, taken with respect to averages on cubes.  As is well known,  \eqref{eq3.goodl} implies the global $L^p$ bound 
\begin{equation}\label{eq3.global1}
\|S(u)\|_{L^p(\rn)}\,\leq\, C_p\,\Big(\|g(u)\|_{L^p(\rn)}  +\|N_*(u)\|_{L^p(\rn)}\Big)\,,\qquad 2<p<\infty\,. 
\end{equation}
For the sake of specificity, let us fix the aperture of the cones defining $S(u)$ to be 1, and that of the
cones defining $N^*(u)$ to be $\gg 1$. 

We now fix a cube $Q$ in the Whitney decomposition of $\{S(u)>\lambda\}$, and
we introduce a truncated square function
$$S_Q(u)(x):=\left(\iint_{|x-y|< t<\ell(Q)}|\nabla u(y,t)|^2 \frac{dy dt}{t^{n-1}}\right)^{1/2}\,.$$
To prove \eqref{eq3.goodl}, we may suppose that there is at least one
point in $Q$, call it $x_*$, for which 
\begin{equation}\label{eq3.4++}
\left(M\left(g(u)^2 + N_*(u)^2\right)(x_*)\right)^{1/2}
\leq \gamma \lambda\,.
\end{equation}
Then by the arguments of \cite{DJK}
(which are now standard), using interior estimates for solutions,
properties of Whitney cubes, and the fact that the cones defining $N_*(u)$ have aperture
much larger than do those defining $S(u)$,  the set on the left hand 
side of \eqref{eq3.goodl} is contained in
$\{x\in Q: S_Q(u)(x)>\lambda\}$,
provided $\gamma$ is chosen small enough, depending on dimension and ellipticity.
We omit the details, which may be found in \cite{DJK}.
By Tchebychev's inequality, and then Fubini's Theorem, 
we therefore have that the left hand side of \eqref{eq3.goodl} is bounded by
\begin{multline}\label{eq3.5++}
\left|\Big\{x\in Q: S_Q(u)(x)>\lambda\Big\}\right| \\[4pt]
\leq \frac1{\lambda^2} \int_QS_Q(u)^2(x)\,dx \lesssim  \frac1{\lambda^2} 
\int_{3Q} \int_0^{\ell(Q)} |\nabla u(y,t)|^2 \,t dtdy=: \frac1{\lambda^2}\,I\,.
\end{multline}
We claim that
\begin{equation}\label{eq3.6++}
I\lesssim  |Q| \,M\left(g(u)^2 + N_*(u)^2\right)(x_*)\,,
\end{equation}
whence \eqref{eq3.goodl} follows from \eqref{eq3.4++}.

Let us now verify the claim.  Set $\Phi_Q(t)\equiv \Phi(t/\ell(Q))$, where 
$\Phi \in C^\infty (\RR)$, with $0\leq\Phi\leq 1$,
$\Phi(t)\equiv 1$ if $t\leq  1$, and $\Phi(t) \equiv 0$ if $t\geq 2$.  
Integrating by parts in $t$, we then have that 
\begin{multline*}
I \leq \int_{3Q} \int_0^{2\ell(Q)} |\nabla u(y,t)|^2\,\Phi_Q(t) \,t dtdy
\approx \int_{3Q} \int_0^{2\ell(Q)} \partial_t\left(|\nabla u(y,t)|^2\,\Phi_Q(t) \right)\,t^2 dtdy\\[4pt]\lesssim
\int_{3Q} \int_0^{2\ell(Q)} \Big\langle\nabla\partial_t u(y,t),\nabla u(y,t)\Big\rangle\,\Phi_Q(t) \,t^2 dtdy
\,+\,\int_{3Q} \fint_{\ell(Q)}^{2\ell(Q)} |\nabla u(y,t)|^2 \,t^2 dtdy \\[4pt]
=: I' + I''\,.
\end{multline*}
By Caccioppoli's inequality, $I'' \lesssim |Q|\, M\left(N_*(u)^2\right)(x_*)$. 
Moreover, by Cauchy's inequality, we have that
$$I'\lesssim \epp \int_{3Q} \int_0^{2\ell(Q)} |\nabla u(y,t)|^2\,\Phi_Q(t) \,t dtdy
\,+\, \frac1{\epp}\int_{3Q} \int_0^{2\ell(Q)} |\nabla \partial_t u(y,t)|^2 \,t^3 dtdy\,.$$
Fixing $\epp$ small enough, depending only upon allowable parameters, 
we may hide the first of these terms (to do this rigorously, we would smoothly
truncate the $t$-integral away from 0, to guarantee that $I$ is finite;  the truncation results
in additional error terms which may be shown, via Caccioppoli's inequality, to be controlled by
$|Q| M(N_*(u)^2)(x_*)$;  we omit the routine details).  Covering the region 
$3Q\times (0,2\ell(Q))$ by Whitney boxes (of the decomposition
of the open set $\reu$), and using Caccioppoli's inequality
(as we may, since by $t$-independence, $\partial_t u$ is a solution),
we find that the last term is bounded by a constant times
$$ \int_{4Q} \int_0^{3\ell(Q)} | \partial_t u(y,t)|^2 \,t dtdy \,\lesssim
\,|Q|\,M\left(g(u)^2\right)(x_*)\,.$$
Collecting estimates, we obtain \eqref{eq3.6++}, as claimed.  This concludes Step 1.

To conclude this subsection, let us note that in the context of the Carleson measure estimate of
Corollary \ref{c1}, the preceding argument shows that the left hand side of \eqref{eq1.9} may be replaced by a similar expression, but with $\nabla u$ replaced by $\partial_t u$, modulo errors on the order
of $\|u\|_\infty$.  Thus, to establish Corollary \ref{c1}, it suffices to verify:
\begin{equation*}
\sup_Q \frac1{|Q|} \iint_{T_Q} |\partial_t u(x,t)|^2 t dt dx \leq C\, \|u\|_{L^\infty(\Omega)}\,.
\end{equation*}  
We further note that since $\partial_t u$ is a solution, it satisfies De Giorgi/Nash local H\"older continuity estimates.   Consequently, by \cite[Lemma 2.14]{AHLT}, it is enough to show that there is a
uniform constant $c$, and for each cube $Q$, a set $F\subset Q$, with $|F|\geq c|Q|$,
for which
\begin{equation}\label{eq1.9a}
\frac1{|Q|} \int_F\!\int_0^{\ell(Q)} |\partial_t u(x,t)|^2 t dt dx \leq C\, \|u\|_{L^\infty(\Omega)}\,,
\end{equation} 

\subsection{Step 2:  a ``good-$\lambda$" inequality for the vertical square function}\label{ss3.2}  
We turn now to the heart 
of the proof of Theorem \ref{th1}, namely,
to establish a ``good-$\lambda$" inequality for the vertical square function \eqref{eq3.vert}
in terms of $N_*(u)$.    Throughout this subsection,
we may assume that our solution $u$ is continuous up the the boundary of $\reu$;
indeed, having established
the desired bounds for continuous $u$, we may apply those bounds to
$u_\delta(x,t):= u(x,t+\delta)$, with $\delta>0$ which is a solution
of the same equation, by $t$-independence of the coefficients.   In turn, 
these bounds are preserved in the limit, as $\delta\to 0$, by a monotone convergence argument.
We omit the routine details.

For a given $\lambda >0$, suppose that $Q$ is a Whitney cube for the open set
$$E_\lambda:=\{x\in\rn: M\left(g(u)\right)(x)>\lambda\}\,.$$
We now fix $\eps>0$
so that $2+\eps$
is an exponent for which the Hodge decomposition holds for $L_\|$ and $L_\|^*$
(cf. \eqref{eq1.hodge}-\eqref{eq1.hodge**}.)
Let $\vp,\tilde{\vp}\in W_0^{1,2+\eps}(5Q)$ be as in
\eqref{eq1.hodge}, and for a small $\eta>0$ to be chosen, set
\begin{align}\label{eq3.1*}
\Lambda_1&\,:= \,\eta^{-1}N^\eta_*(\partial_t\pe\vp)+
N_*(\partial_t\p^*_t\vp)+\widetilde{N}^\eta_*(\nabla
\pe\vp) + \left(M(|\nabla_x\vp|^2)\right)^{1/2}
\\[4pt]\label{eq3.2*}
\Lambda_2&\,:=\, \eta^{-1}N^\eta_*(\partial_t\Pe\tilde{\vp})+
N_*(\partial_t\p_t\tilde{\vp})+\widetilde{N}_*^\eta(\nabla
\Pe\tilde{\vp})+ \left(M(|\nabla_x\tilde{\vp}|^2)\right)^{1/2}\,,
\end{align}
where the non-tangential maximal operator $N_*$ in the 
second terms  on the two right hand sides is defined with respect to cones of
aperture 1.  We define a certain ``maximal differentiation operator"
\begin{equation}\label{eq3.MMW}
D_{*,p}f(x):=\sup_{r>0}\left(\fint_{|x-y|<r}\left(\frac{|f(x)-f(y)|}{|x-y|}\right)^p\,dy\right)^{1/p},
\end{equation}
which obeys the estimate
\begin{equation}\label{eq3.MMW2}
\|D_{*,p_1} f\|_p \leq C_{p,p_1,n}\, \|\nabla f\|_p\,,\quad 1\leq p_1<p<\infty\,.
\end{equation}
Indeed, by a classical  ``Morrey type" inequality (see, e.g.,
\cite[Lemma 7.16]{GT}), we have 
$$\frac{|f(x)-f(y)|}{|x-y|} \lesssim M(\nabla f)(x) + M(\nabla f)(y)\,,$$
whence it follows that 
$$D_{*,p_1}f(x)\lesssim M(\nabla f)(x) + \left(M \big(M(\nabla f)\big)^{p_1}(x)\right)^{1/p_1}\,.$$
The latter bound clearly implies \eqref{eq3.MMW2}.

We then fix $p_1\in(1,2)$ and define
\begin{equation}\label{eq3.fdef}
F:=\left\{x\in Q:\Lambda_1(x) + \Lambda_2(x)+D_{*,p_1}\vp(x) +D_{*,p_1}\tilde{\vp}(x)
\leq \kappa_0\right\}\,,
\end{equation}
and note that by \eqref{eq2.19}-\eqref{eq2.21}, \eqref{eq3.MMW2},
and Tchebychev's inequality, we have
\begin{equation}\label{eq3.4*}|Q\setminus F|\lesssim \kappa_0^{-2-\eps}\,|Q|\,,
\end{equation}
uniformly in $\eta$.

Set $p_0:= 2(2+\eps)/\eps$. 
Our goal is to prove that for some aperture $\alpha$ sufficiently large,

\begin{equation}\label{eq3.gl}
\left|\big\{x\in Q: g(u)(x)>3\lambda, 
\left(M\big(N^\alpha_*(u)^{p_0}\big)(x)\right)^{1/p_0}\leq 
\gamma\lambda\big\}\right|\leq C\left(C_{\kappa_0,\eta}\,
\gamma^2\, +\,\kappa_0^{-2-\eps}\right)|Q|\,,
\end{equation}
for all $\gamma >0$ sufficiently small, for all $\kappa_0$ sufficiently large, and for $\eta$ chosen small enough depending on $\kappa_0$.  Here, $\gamma$ is at our disposal, and \eqref{eq3.4*} holds uniformly in $\eta$, so we may choose first $\kappa_0$,
then $\eta$, and finally $\gamma$, to obtain a bound on the RHS of
\eqref{eq3.gl} which is a small portion of $|Q|$, whence the standard good-lambda arguments may be carried out to show that
\begin{equation}\label{eq3.6*} 
\|g(u)\|_p\leq C_p\, \|N_*^\alpha(u)\|_p\,,\qquad \forall p_0<p<\infty\,.
\end{equation}
Let us note at this point that the latter bound, together with \eqref {eq3.global1}, yield that
\begin{equation}\label{eq3.12++} 
\|S(u)\|_p\leq C_p\, \|N_*(u)\|_p\,,\qquad \forall p_0<p<\infty\,.
\end{equation}
By \eqref{eq3.4*}, it is enough to prove the following modified version of \eqref{eq3.gl}:
\begin{equation}\label{eq3.gl2}
\left|\big\{x\in F: g(u)(x)>3\lambda, 
\left(M\big(N^\alpha_*(u)^{p_0}\big)(x)\right)^{1/p_0}\leq 
\gamma\lambda\big\}\right|\,\leq\, C_{\eta,\kappa_0}\,\gamma^2\, |Q|\,,
\end{equation}
As usual, we may assume that there is a point in $Q$, call it $x_*$, such that 
\begin{equation}\label{eq3.x0}
N^\alpha_*(u)(x_*)\leq \left(M\big(N^\alpha_*(u)^{p_0}\big)(x_*)\right)^{1/{p_0}}\leq \gamma\lambda\,,
\end{equation} otherwise there is nothing to prove. 
Let us note that
$$g(u)\,\leq \,\left(\int_0^{\ell(Q)} |\partial_t u|^2\,tdt\right)^{1/2} \,+\,
\left(\int_{\ell(Q)}^\infty |\partial_t u|^2\,tdt\right)^{1/2} =:g_1(u) +g_2(u).$$
We claim that 
\begin{equation}\label{eq3.g2}
g_2(u)(x) \leq (1+C\gamma) \lambda\,,\qquad \forall x\in Q\,.
\end{equation}
Indeed, we have that
$$g_2(u)(x) \leq g(u)(x_Q) + \left(\int_{\ell(Q)}^\infty 
|\partial_t u(x,t)-\partial_t u(x_Q,t)|^2\,tdt\right)^{1/2}\,,$$
where we may choose $x_Q\in \rn\setminus E_\lambda$,
with $\dist(x_Q,Q) \approx \ell(Q)$, since $Q$ is a Whitney cube for $E_\lambda$.
Then $g_2(u)(x_Q)\leq \lambda$, by definition of $E_\lambda$.  Moreover, since our coefficients 
are $t$-independent, we may apply standard De Giorgi/Nash/Moser interior estimates to obtain that
$$\left(\int_{\ell(Q)}^\infty 
|\partial_t u(x,t)-\partial_t u(x_Q,t)|^2\,tdt\right)^{1/2}\lesssim 
\left(\int_{\ell(Q)}^\infty 
\left(\frac{\ell(Q)}{t}\right)^{2\beta}\frac{dt}{t}\right)^{1/2}N^\alpha_*(u)(x_0)\lesssim \gamma\lambda\,,$$
by \eqref{eq3.x0}, where $\beta>0$ is the De Giorgi/Nash exponent, and where
we have taken the aperture $\alpha$ to be sufficiently large.
This proves the claim.  

Taking $\gamma$ sufficiently small, we may therefore suppose that $g_2(u)<2\lambda$
in $Q$, so that the LHS of
\eqref{eq3.gl2} is bounded by
\begin{multline}\label{eq3.tcheb}
\left|\big\{x\in F: g_1(u)(x)>\lambda\big\}\right|\,\leq\,
\frac1{\lambda^2} \int_F \int_0^{\ell(Q)} |\partial_t u|^2 tdtdx\\[4pt] \lesssim
\frac1{\lambda^2} \int_F \int_0^{\ell(Q)} A(x)\nabla u(x,t)\cdot\nabla u(x,t)\,tdt dx\,,
\end{multline}
where in the last step, we have crudely dominated $|\partial_tu|$ by $|\nabla u|$
and then used ellipticity.   We note at this point that in the context of Corollary \ref{c1},
the integral in the middle term is precisely that which appears in \eqref{eq1.9a}.
In the remainder of this subsection, we shall prove that
\begin{equation}\label{eq3.21aa}
\int_F \int_0^{\ell(Q)} A(x)\nabla u(x,t)\cdot\nabla u(x,t)\,tdt dx\,\leq C_{\eta,\kappa_0}\,|Q|
\left(\fint_{2Q}N^\alpha_*(u)^{p_0}\right)^{2/p_0}\,.
\end{equation} 
Clearly, this estimate yields both our desired
``good-lambda" inequality, as well as the bound \eqref{eq1.9a}.

We turn to the proof of \eqref{eq3.21aa}.
By the change of variable $t\to t-\vp(x)+\pe\vp(x)$ (that this change of variable is ``legal"
follows from \eqref{eq3.6} and \eqref{eq3.tder} below),
we have
\begin{equation}\label{eq3.11}
\int_F \int_0^{\ell(Q)} A\nabla u\cdot \nabla u \, tdtdx \lesssim
\int_F \int_0^{2\ell(Q)} A_1\nabla u_1\cdot \nabla u_1 \, tdtdx\,,\end{equation}
where $u_1(x,t):= u(x,t-\vp(x)+\pe\vp(x))$,
and where $A_1$ and $u_1$ are as in Section 2 above.
Here, we have chosen  $\eta\ll\kappa_0^{-2}$, so that
\begin{equation}\label{eq3.6}
|(I-\pe)\vp(x)|=\left|\int_0^{\eta t}\partial_s \p^*_s \vp(x)ds\right|\leq \eta t \kappa_0 \ll\eta^{1/2} t
 \ll t/8\,,\quad \forall x\in F\,.
\end{equation}
We note at this point that the analogue of \eqref{eq3.6} holds
also for $(I-\Pe)\tilde{\vp}$, and moreover, by \eqref{eq3.1*}-\eqref{eq3.fdef},  we have
\begin{equation}
\label{eq3.tder}
\max\left(|\partial_t \Pe\tilde{\vp}(x)|,|\partial_t \pe\vp(x)|\right) \leq \eta \kappa_0 \ll\eta^{1/2}\,,
\qquad \forall (x,t) \in\Omega_0\,,
\end{equation}
where $\Omega_0$ is the sawtooth domain
\begin{equation}
\label{eq3.saw}
\Omega_0:=\bigcup_{x\in F} \,\Gamma_0(x)\,,
\end{equation}
and $\Gamma_0(x)$ denotes the cone with vertex at $x$ 
and aperture $\eta$.   Thus,
if $(x,t)\in \Omega_0$, then $|x-x_0|<\eta t$ for some $x_0\in F$, so that, 
setting $\vp_{x_0,\eta t}
:= \fint_{|x_0-y|<2 \eta t}\vp(y) dy$, we have
\begin{equation}\label{eq3.16a}
|\pe\left(\vp-\vp_{x_0,\eta t}\right)(x)|\,\lesssim \,\eta t M(\nabla\vp)(x_0)\,\lesssim\,\eta t \kappa_0
\ll\eta^{1/2}t\,,\quad \forall (x,t)\in\Omega_0\,,
\end{equation}
by a telescoping argument and Poincar\'e's inequality, and
by the Gaussian bounds for $\pe$.

We now define a smooth cut-off adapted to $\Omega_0$, or to be more precise, to
a slightly smaller sawtooth domain $\Omega_1:= \cup_{x\in F}\Gamma_1(x)$,
where $\Gamma_1(x)$ has aperture $\eta/8$.
Let $\delta(x):= \dist(x,F)$, and let $\Phi\in C^\infty (\re)$, with $\Phi(r) \equiv 1$ if
$r\leq 1/16$, and $\Phi(r)\equiv 0$, if  $r>1/8$.  We then set
\begin{equation}\label{eq3.psidef}
\Psi(x,t)\,:=\,\Phi\left(\frac{\delta(x)}{\eta t}\right)\,\Phi\left(\frac{t}{32\,\ell(Q)}\right)\,.
\end{equation}
Let us record some observations concerning the cut-off $\Psi$, and certain related sawtooth regions.
To begin, we note that 
\begin{equation}\label{eq3.16*}
\Psi(x,t)\equiv 1\,,\qquad \forall (x,t)\in F\times (0,2\,\ell(Q))\,,
\end{equation}
and also, since $\eta$ is small, that 
$$\supp(\Psi)\subset \Omega_{1,Q}:= \Omega_1\cap \Big(2Q\times (0,4\ell(Q))\Big)\,.$$

Next, we claim that
\begin{equation}\label{eq3.18*}
|(I-\pe)\vp(x)| \ll \eta^{1/2} t \,,\qquad \forall (x,t) \in \Omega_{0,Q}:=\Omega_0\cap 
\Big(2Q\times (0,4\ell(Q))\Big) \,,
\end{equation}
and that an analogous bound holds for $(I-\Pe)\tilde{\vp}$.
To verify the claim, 
we first observe that for $(x,t)\in\Omega_0$, there is a point $x_0\in F$ such that
 $$x\in \Delta:= \Delta(x_0,\eta t):=\{x:|x-x_0|<\eta t\}.$$  Let us further observe that  
$2\Delta \subset 5Q$, since 
$t\leq 4\ell(Q)$, and $\eta$ is small.
Next, we note that by \eqref{eq1.hodge}, 
$\vp$ is a $W^{1,2}$ weak solution of the inhomogeneous PDE
$$L^*_\| \vp= \dv ({\bf c})\,,$$
in the domain $5Q$, and the same is true with $\vp$ replaced by $\vp-c$, for any constant $c$.  
Thus, by Moser-type interior estimates, and the definition of $F$ (cf. \eqref{eq3.fdef})
we have that 
\begin{multline}\label{eqmoser*}
\sup_{\Delta}|\vp-\vp(x_0)|\,\lesssim \,
\left(\fint_{2\Delta} |\vp(z)-\vp(x_0)|^{p_1} \,dz\right)^{1/p_1} 
\,+ \,\eta t\,\|{\bf c}\|_\infty\\[4pt]
\lesssim \eta t\,\Big(D_{*,p_1}\vp(x_0) +  \|{\bf c}\|_\infty\Big)\lesssim \eta t\left(\kappa_0
+ \|{\bf c}\|_\infty\right)\ll\eta^{1/2} t\,,
\end{multline}
where the implicit constants depends only upon $p_1$, ellipticity and dimension (see, e.g.,
\cite[ Theorem 8.17, p. 194]{GT}).
Consequently, for {\bf every} $y\in \Delta$, 
we then have
\begin{multline}\label{eq3.18**}
|(I-\pe)\vp(y)|\\[4pt]
\leq\, |\vp(y)-\vp(x_0)| + |(I-\pe)\vp(x_0)|+|\pe\left(\vp-\vp_{x_0,\eta t}\right)(x_0)|+
|\pe\left(\vp-\vp_{x_0,\eta t}\right)(y)| \\[4pt]
\ll \eta^{1/2} t \,,
\end{multline}
where we have used  \eqref{eq3.6} and \eqref{eq3.16a}, along with \eqref{eqmoser*}.  
In particular, since $x\in \Delta$, we obtain \eqref{eq3.18*}, as claimed.
The corresponding bound for $(I-\Pe)\tilde{\vp}$ follows by an identical argument.

Moreover, for $(x,t)\in \Omega_0$, 
by \eqref{eq3.tder} we have
\begin{align}\label{eq3.jbounds}
J(x,t)\,=\,
\partial_t \left(t-\vp(x)+\pe\vp(x)\right)&\, \approx\, 1
\\[4pt]
\label{eq3.jbounds2}
\widetilde{J}(x,t)\,=\,\partial_t \left(t-\tilde{\vp}(x)+\Pe\tilde{\vp}(x)\right)&\, \approx\, 1\,.
\end{align}
We then have that the mapping $\rho(x,t):= (x,\tau(x,t)):= (x,t+\pe\vp(x)-\vp(x))$ is 1-1
on $\supp(\Psi)$, with 
\begin{equation}\label{eq3.23saw}
7t/8< \tau(x,t)<9t/8\,,\qquad \forall (x,t)\in\supp(\Psi)\,.
\end{equation}  
Consequently, if 
$\Omega_\beta:=\cup_{x\in F}\Gamma_\beta(x)$ is the sawtooth domain with respect to $F$, with 
cones of aperture $\beta$, we have that
\begin{equation}\label{eq3.24saw}
\Omega_{8\beta/9}\subset \rho(\Omega_\beta)\subset \Omega_{8\beta/7}\,,
\qquad \forall \beta\leq \eta\,.
\end{equation}

Let us note also that 
\begin{equation}\label{eq3.19*}
|\nabla_{x,t}\Psi(x,t)|\,\lesssim\,
\frac1{\eta t}1_{E_1}(x,t) \,+\,
\frac{1}{\ell(Q)}1_{E_2}(x,t) \,,
\end{equation}
where 
\begin{align}\label{eq3.17}
E_1&\,:=\,\left\{(x,t)\in 2Q\times(0,4\,\ell(Q)): \eta t/16\leq \delta(x) \leq \eta t/8 \right\}
\\[4pt]\nonumber
E_2&\,:=\,2Q\times \Big(2\,\ell(Q),4\,\ell(Q)\Big)
\end{align}

By \eqref{eq3.16*}, we have that the RHS of \eqref{eq3.11} is bounded by
\begin{multline}\label{eq3.12}
\iint_{\mathbb{R}^{n+1}_+}A_1\nabla u_1\cdot\nabla u_1\,\Psi^2\, t\, dt dx =-\frac12
\iint_{\mathbb{R}^{n+1}_+}L_1(u_1^2)\, \Psi^2\,tdt dx\\[4pt]
=-\frac12
\iint_{\mathbb{R}^{n+1}_+}u_1^2\,L^*_1(t)\, \Psi^2 dt dx \,
-\,\frac12\iint_{\reu} A_1\nabla (u_1^2) \cdot\nabla (\Psi^2) t dt dx\\[4pt]
+\,\frac12 \iint_{\reu} (u_1)^2  \,e_{n+1}\cdot A_1\nabla(\Psi^2) \,dx dt\,+\,\frac12
\int_{F} 
u^2\,A_{n+1,n+1}\, dx \\[4pt]
=:\s + \e_1 + \e_2 + \B\,,
\end{multline}
where $e_{n+1}:= (0,...0,1)$, and where in the boundary term $\B$ we have used that 
$(A_1^*)_{n+1,n+1}(x,0) = A_{n+1,n+1}(x)$, that $u_1(x,0) = u(x,0)$ on $F$ (cf. \eqref{eq3.6}),
and that $\Psi(x,0)=1_F(x)$.
We note that
\begin{equation}\label{eq3.22}
|\B|\leq C\,|Q|\fint_Q N^\alpha_*(u)^2 \leq C(\gamma \lambda)^2 |Q|\,,
\end{equation}
by H\"older's inequality and \eqref{eq3.x0}.  Let us 
now consider the ``error terms" $\e_1$ and $\e_2$.  For a small constant $\sigma$ to be chosen later,
we have that
\begin{multline}\label{eq3.23}
|\e_1|\, \leq\, \sigma \iint_{\reu} A_1\nabla u_1\cdot \nabla u_1\, \Psi^2 \,t\, dtdx
\,+\, \frac1{\sigma} \iint_{\reu}u_1^2\,A_1\nabla \Psi\cdot\nabla\Psi \,t\,dtdx\\[4pt]
=: \,\e_1' + \e_1''\,.
\end{multline}
Choosing $\sigma$ small enough, we shall eventually hide $\e_1'$,
along with several
copies of it that will arise later, on the LHS of
\eqref{eq3.12}.   By \eqref{eq3.19*}, and  the 
definition of $A_1$ \eqref{eq2.2}, writing ${\bf h}= {\bf c}1_{5Q}
+A_\|^*\nabla \vp$ (cf. \eqref{eq1.hodge}), and using \eqref{eq3.jbounds}
and the fact that the original coefficient matrix is bounded,
we find that
$$\e''_1\leq \e''_{11}+\e''_{12}\,,$$
where
$$\e''_{11}=\frac{C_\eta}{\sigma} \iint_{E_1} 
u_1^2\,\Big[1+ |\nabla_x(I-\pe)\vp(x)|^2\Big]\,\frac{dx dt}{t}\,,$$
and where $\e''_{12}$ is a similar integral over the region
$E_2$.  We shall treat only $\e''_{11}$, as the 
term $\e''_{12}$ is easier. 

To this end, we write
\begin{equation}\label{eq3.28*}
\e_{11}'' = \frac{C_\eta}{\sigma}\sum_k\sum_{Q'\in \dd_k^\eta}\int_{Q'}\int_{2^{-k}}^{2^{-k+1}}
u_1^2\,\Big(1+ |\nabla_x(I-\pe)\vp(x)|^2\Big)\,1_{E_1}\,\frac{dx dt}{t}\,,
\end{equation}
where $\dd_k^\eta$ denotes the grid of dyadic cubes such that
\begin{equation}\label{eq3.dyadicgrid}
\frac1{64} \eta2^{-k}\leq\diam Q'< \frac1{32} \eta 2^{-k}\,,\qquad Q'\in \dd_k^\eta\,.
\end{equation}
Consider now any fixed $k$ and $Q'\in\dd_k^\eta$, for which the double integral
in \eqref{eq3.28*} is non-zero, thus, 
for which there is a point 
\begin{equation}\label{eqintersect}
(x_1,t_1)\in E_1\cap \left(Q'\times \left[2^{-k},2^{-k+1}\right]\right).
\end{equation}
We now fix such a point $(x_1,t_1)$.  By definition of $E_1$,
\begin{equation}\label{eq3.35****}
\frac{\eta t_1}{16}\leq \delta(x_1)\leq \frac{\eta t_1}8\,.\end{equation}
In particular, there is a point $x_0\in F$ such that $|x_1-x_0|<(\eta t_1)/8$.  
Note that 
\begin{equation}\label{eqndisk}
Q'\subset \Delta':=\Delta(x_0,\eta 2^{-k}):= 
\{z: |x_0-z|<\eta 2^{-k}\}\,,
\end{equation} 
by \eqref{eq3.dyadicgrid}.  Consequently, 
\begin{equation}\label{eq3.34***} 
Q'\times[2^{-k},2^{-k+1}]\subset \Omega_{0,Q}
\end{equation}
(we recall that $\Omega_{0,Q}$ is defined in \eqref{eq3.18*}).
Furthermore, since $\delta$ is Lipschitz with norm 1, using \eqref{eq3.dyadicgrid}
and \eqref{eq3.35****}, we obtain that
there is a uniform constant $C$ such that
\begin{equation}\label{eq3.36***}
Q'\times\left[2^{-k},2^{-k+1}\right]\,\subset\, \widetilde{E}_1:=\Big\{(y,s)
\in 2Q\times(0,4\ell(Q)): \frac{\eta s}C\leq\delta(y)\leq C\eta s\Big\} \,.
\end{equation}
It then follows that 
\begin{equation}\label{eq3.38***}
|Q'|\lesssim \int_{Q'}\int_{2^{-k}}^{2^{-k+1}}1_{\widetilde{E}_1}(y,s) \frac{ds}{s}dy\,.
\end{equation}

Now, by \eqref{eq2.22*}, \eqref{eq3.1*}, and \eqref{eq3.fdef}, 
we have that for every $t\in[2^{-k},2^{-k+1}]$, 
\begin{multline}\label{eq3.29a}
\fint_{Q'}|\nabla_x(I-\pe)\vp(x)|^2\,dx\,\lesssim\,\fint_{\Delta'}|\nabla_x\pe\vp(x)|^2\,dx \,+\,
\fint_{\Delta'}|\nabla_x\vp(x)|^2\,dx
 \\[4pt]\lesssim  \,\left(\widetilde{N}^\eta_*(\nabla \pe\vp)\right)^2(x_0) +
M(|\nabla_x\vp|^2)(x_0)\,\lesssim\, \kappa_0^2\,.
\end{multline}
Moreover, by \eqref{eq3.18*}, \eqref{eq3.34***}, and the definition of $u_1$, for $\alpha$ large
enough we have
\begin{equation}\label{eq3.33b}
\sup |u_1(x,t)|\leq \essinf_{y\in Q'} N_*^\alpha(u)(y)\,,
\end{equation}
where the supremum runs over all $(x,t)\in Q'\times (2^{-k},2^{-k+1})$.
Thus,
\begin{multline}\label{eq3.29*}
\int_{Q'}\int_{2^{-k}}^{2^{-k+1}}
u_1^2\,\Big(1+ |\nabla_x(I-\pe)\vp(x)|^2\Big)\,1_{E_1}\,\frac{dx dt}{t}\\[4pt]
\leq\int_{2^{-k}}^{2^{-k+1}} \!\!
\essinf_{Q'} \left(N_*^\alpha (u)^2\right)\fint_{Q'}\Big(1+ |\nabla_x(I-\pe)\vp(x)|^2\Big) \,dx\,\,
|Q'| \,\frac{ dt}{t}\\[4pt]
\lesssim\, \left(1+\kappa_0^2\right) 
\int_{Q'}N_*^\alpha (u)^2(y) \int_{2^{-k}}^{2^{-k+1}}1_{\widetilde{E}_1}(y,s) \, \frac{ds}{s}dy\,,
\end{multline}
where we have used \eqref{eq3.38***} and \eqref{eq3.29a}.
Returning to \eqref{eq3.28*}, we then have
\begin{multline*}
\e_{11}'' \leq C_{\eta,\kappa_0, \sigma}\sum_k\sum_{Q'\in \dd_k^\eta}\int_{Q'}N_*^\alpha (u)^2(y)
 \int_{2^{-k}}^{2^{-k+1}}1_{\widetilde{E}_1}(y,s) \, \frac{ds}{s}\, dy\\[4pt]
 \leq C_{\eta,\kappa_0, \sigma} \int_{2Q}N_*^\alpha (u)^2(y)
  \int_{\delta(y)/(C\eta)}^{C\delta(y)/\eta} \, 
  \frac{ds}{s}\,dy \leq C_{\eta,\kappa_0, \sigma} (\gamma\lambda)^2  |Q|\,,
 \end{multline*}
 where in the last step we have used \eqref{eq3.x0}.

The term $\e_2$ in \eqref{eq3.12} satisfies the same bounds as $\e''_1$.
It therefore remains to treat the main term $\s$.    
To this end, we first observe that
$$
L_1^*(t) = \dv_x A_\|^*\nabla_x \pe \vp 
\,-\,\partial_t \left(\frac1J\langle A \,{\bf p},{\bf p}\rangle\right)\,=:
-L^*_\|\pe\vp \,-\,\partial_t \left(\frac1J\langle A \,{\bf p},{\bf p}\rangle\right)\,,
$$
since $\dv_x {\bf h} = 0$.
We then have that
\begin{multline}\label{eq3.31sdef}
\s=\frac12
\iint_{\mathbb{R}^{n+1}_+}u_1^2\,\left(L^*_\| \pe \vp\right) \, \Psi^2 dt dx\,+\,
\frac12
\iint_{\mathbb{R}^{n+1}_+}u_1^2\,\partial_t \left(\frac1J\langle A \,{\bf p},{\bf p}\rangle\right) 
\, \Psi^2 dt dx\\[4pt]=:\, \s_1+\s_2\,.
\end{multline}
We treat $\s_1$ first.  We note that by definition of $\pe$, we have 
\begin{equation}\label{eq3.31a}
 \partial_t\pe=-2\eta^2tL^*_\|\pe=-2\eta^2t\,\pe \,L^*_\|\,.
\end{equation} Integrating by parts in $t$, we then obtain 
\begin{multline}
\s_1 \, =\,-\frac12 \iint_{\mathbb{R}^{n+1}_+}u_1^2\, \partial_t
\left(L^*_\| \pe \vp\right)\,\Psi^2\, t \,dt dx\\[4pt]+\,C_\eta
\iint_{\mathbb{R}^{n+1}_+}\left(u_1\,\partial_t u_1\right)\,
\partial_t\pe\vp\,\Psi^2\, dt dx \, +\,C_\eta
\iint_{\mathbb{R}^{n+1}_+}\,u^2_1\,
\partial_t\pe\vp\,\left(\Psi\, \partial_t \Psi\right)\,dt dx\\[4pt]=: \s_1' + \s_1''+\s_1'''.
\label{eq2.4}\end{multline}  
The term $\s_1'''$ may be handled like $\e_1''$ and $\e_2$ above,
except that the present term is somewhat easier, since $\partial_t\pe\vp$ is bounded
in the support of $\Psi$ (cf. \eqref{eq3.1*} and \eqref{eq3.fdef}.)

Next, using  \eqref{eq3.31a}, and that the original matrix $A\in L^\infty$, we have
\begin{multline}|\s_1'| \leq \left|\iint_{\mathbb{R}^{n+1}_+}u_1 \,\nabla_x u_1\cdot A^*_\| 
\nabla_x \partial_t
\pe\vp\,\Psi^2\, t \,dt dx\right|\\[4pt] +\,
\left|\iint_{\mathbb{R}^{n+1}_+}u_1^2\, \left(A^*_\| 
\nabla_x \partial_t
\pe\vp\cdot \nabla_x \Psi\right)\,\Psi\, t \,dt dx\right|\,=:\, J + K
\\[4pt] \lesssim \sigma
\iint_{\mathbb{R}^{n+1}_+}|\nabla_x u_1|^2 \,\Psi^2\,t\,dtdx\,
+\, \left(\frac1{\sigma}+ 1\right)
\iint_{\mathbb{R}^{n+1}_+}u_1^2\,\left|\eta^2\nabla_x 
\pe\,L_\|^*\vp\right|^2\,\Psi^2\,
t^3dtdx\\[4pt] +\,
\iint_{\mathbb{R}^{n+1}_+}u_1^2\,  
|\nabla_x \Psi|^2 \,t \,dt dx := \s_{11}' + \s_{12}'\, +\s_{13}',
\label{eq2.5}\end{multline}
where once again $\sigma$ is a small number at our disposal.
The term $\s_{13}'$ is a slightly simpler version of $\e_1''$, and may be handled by a similar
argument.

Next, we consider $\s_{12}'$.   By \eqref{eq3.18*}, and the definition of $u_1$,
we have that
\begin{equation}\label{eq3.34}
|u_1(x,t)|\leq \sup_{s>0}|u(x,s)|\leq N_*^\alpha(u)(x)\,,\qquad \forall (x,t)\in\Omega_{0,Q}\,.
\end{equation}
Consequently,
\begin{multline}\label{eq3.40s12}
\s_{12}'\leq C_{\sigma} \int_{2Q}N_*^\alpha(u)^2(x)\,\left(\widetilde{\G}_2(A^*_\|\nabla\vp)(x)\right)^2\,dx\\[4pt]
\leq C_\sigma \left(\int_{2Q}N_*^\alpha(u)^{2(2+\eps)/\eps}\,dx\right)^{\eps/(2+\eps)}
 \left(\int_{\rn}\left(\widetilde{\G}_2(A^*_\|\nabla\vp)\right)^{2+\eps}\,dx\right)^{2/(2+\eps)}\\[4pt]
 \leq C_\sigma (\gamma\lambda)^2 |Q|\,,
\end{multline}
where $\widetilde{G}_2$ is the $\p^*_t$ analogue of the vertical square function defined
in \eqref{eq1.gdef2}, and where we have used \eqref{eq1.katovertp}, \eqref{eq1.hodge*},
and \eqref{eq3.x0}
(with $p_0:= 2(2+\eps)/\eps$.)

We would like to handle $\s_{11}'$ by simply hiding it on the LHS of \eqref{eq3.12},
with $\sigma$ chosen small enough, but there is a slightly delicate issue of ellipticity that 
we must address in order to do this.  Before doing so, let us observe that 
$$|\s_1''|\leq \sigma\! \iint_{\reu} |\partial_t u_1|^2 \, \Psi^2\, t dt dx \,+\,C_{\eta,\sigma}\!\iint_{\reu}
|u_1|^2\,|\partial_t\pe\vp|^2\, \Psi^2 \,\frac{dxdt}{t}=: \s_{11}''+\s_{12}''\,.$$
The term $\s_{12}''$ may be handled exactly like $\s_{12}'$ above, but with the 
$\p^*_t$ analogue of \eqref{eq1.gdef1} in place of \eqref{eq1.gdef2}, and we obtain the bound
$$\s_{12}'' \leq C_{\eta,\sigma} (\gamma\lambda)^2|Q|\,.$$
The term $\s_{11}''$ is of the same nature as $\s_{11}'$, and we shall treat them together.
In fact,
\begin{equation}\label{eq3.35}
\s_{11}'+\s_{11}'' = \sigma \iint_{\reu} |\nabla u_1|^2\,\Psi^2\,t dt dx\,,
\end{equation}
where, unless otherwise specified, $\nabla:= \nabla_{x,t}$.	
We recall that $u_1=u\circ \rho$, with 
$$\rho(x,t):= (x,t+\pe\vp(x)-\vp(x))=:(x,\tau(x,t))\,.$$
Thus, 
\begin{align}\label{eq3.36}
\partial_t u_1 (x,t)&= J(x,t)  (\partial_\tau u)(x,\tau(x,t))\\[4pt]\label{eq3.37}
\nabla_x u_1(x,t)&= \Big(\nabla_x u\Big)(x,\tau(x,t)) + 
 \Big(\partial_\tau u\Big)(x,\tau(x,t)\Big(\nabla_x\tau(x,t)\Big)\,,
\end{align}
where $J(x,t):= \partial_t \tau(x,t) = 1 +\partial_t\pe\vp(x)$. Consequently,
$$(\nabla u)\circ \rho = 
\left(\nabla_xu_1- \frac{\partial_t u_1}{J}\Big(\nabla_x\tau\Big), \frac{\partial_t u_1}{J}\right)$$
Since $J\approx 1$ in $\Omega_0$, we have that
\begin{multline*}|\nabla u_1|\, \lesssim \,\left|\left(\nabla_x u_1, \frac{ \partial_t u_1}{J}\right)\right|\\[4pt]
\lesssim\, \left|\left(\nabla_x u_1- \frac{\partial_t u_1}{J}\Big(\nabla_x\tau\Big), \frac{ \partial_t u_1}{J}\right)\right|\,+\, |\nabla_x\tau|\,|\partial_t u_1|\, =\,|(\nabla u)\circ \rho| \,+\,|\nabla_x\tau|\,|\partial_t u_1|\,.
\end{multline*}
By \eqref{eq2.10}, the ellipticity of $A$, and the fact that $J\approx 1$, we have
that
\begin{equation}\label{eq3.40*}
|(\nabla u)\circ\rho|^2\, \lesssim\, A_1\nabla u_1\cdot \nabla u_1\,.
\end{equation} 
The latter term gives a contribution to \eqref{eq3.35}
that may be hidden on the LHS of \eqref{eq3.12}, if $\sigma$ is chosen small enough.
It remains to treat $|\nabla_x\tau|\,|\partial_t u_1|$.  To this end, we make the same 
dyadic decomposition as in \eqref{eq3.28*}-\eqref{eq3.dyadicgrid} to write

\begin{equation}\label{eq3.39}
\iint_{\reu} |\nabla_x\tau|^2\,|\partial_t u_1|^2\,\Psi^2\, t \,dxdt
= \sum_k\sum_{Q'\in \dd_k^\eta}\int_{2^{-k}}^{2^{-k+1}}\!\!\!\int_{Q'}
|\nabla_x\tau|^2\,|\partial_t u_1|^2\,\Psi^2\, t \,dx dt\,.
\end{equation}
Consider now some $t_1\in[2^{-k},2^{-k+1}]$ and a cube $Q'\in \dd_k^\eta$ for which
$Q'\times\{t_1\}$ meets $\supp(\Psi)$, say at the point $(x_1,t_1)$.  Then $\delta(x_1)<\eta t_1/8$,
by the construction of $\Psi$, whence by \eqref{eq3.dyadicgrid}, we have
$\delta(x) <\eta t_1/4$, for every $x\in Q'$.  Thus, for each $Q'$ and $t_1$ as above,
there is a point $x_0\in F$ and an $n$-disk $\Delta'$ such that \eqref{eqndisk}, and thus also
\eqref{eq3.34***} and  \eqref{eq3.29a}, hold.  In particular,
\begin{equation*}
\frac{7}8t < \tau(x,t) <\frac{9}{8}t\,,\qquad \forall (x,t) \in I(Q'):=Q'\times[2^{-k},2^{-k+1}]\,,
\end{equation*}
by \eqref{eq3.18*} and the definition of $\tau(x,t)$.  It then follows that for $t\in[2^{-k},2^{-k+1}]$,
$$\sup_{x\in Q'}|\partial_t u_1(x,t)|\approx
\sup_{x\in Q'}|(\partial_\tau u)(x,\tau(x,t))|\lesssim \left((\eta t)^{-n-1}\int_{2Q'}\int_{t/2}^{2t}
|\partial_s u(y,s)|^2 ds dy\right)^{1/2}\,,$$
by \eqref{eq3.jbounds}, \eqref{eq3.36},
Moser's interior estimates, and the $t$-independence of $A$.  

We let $\dd_k^\eta(\Psi)$ denote those $Q' \in \dd_k^\eta$ for which
$I(Q'):=Q'\times[2^{-k},2^{-k+1}]$ meets $\supp(\Psi)$;  thus, for which
there is a point $(x,t)\in I(Q')$ such that $\delta(x)<\eta t/8$, by construction of
$\Psi$.  Consequently, for any such $Q'$, by \eqref{eq3.dyadicgrid} we have that 
$$\delta(y) <\diam(2Q') + \frac18 \eta t \leq \frac3{16} \eta t \leq \frac38\eta\, s
\,,\qquad \forall y\in 2Q',\, s>t/2\,.$$
Moreover, we have $t< 4\ell(Q)$ in $\supp(\Psi)$, so that  $s\leq 2t$ implies $s <8\,\ell(Q)$.
Set $\Omega^*:= \{(y,s)\in \reu: \delta(y)< 3\eta s/8, 0<s<8\,\ell(Q)\}$.   As noted above,
 \eqref{eq3.29a} holds in the present context, so that
 \eqref{eq3.39}
is bounded by a constant times
\begin{multline}\label{eq3.42}
\frac1\eta\sum_k\sum_{Q'\in \dd_k^\eta}\int_{2^{-k}}^{2^{-k+1}}\!\!\!\fint_{Q'}
|\nabla_x\tau|^2\,dx\int_{t/2}^{2t}\!\int_{2Q'}
|\partial_s u(y,s)|^2 1_{\Omega^*}(y,s)\,dyds\,   dt\\[4pt]
\leq C_{\eta,\kappa_0}
\sum_k\sum_{Q'\in \dd_k^\eta}\int_{2^{-k-1}}^{2^{-k+2}}\!\!
\int_{2Q'} |\partial_s u(y,s)|^2 1_{\Omega^*}(y,s)\,s\,dyds\\[4pt]
=C_{\eta,\kappa_0}\left(\iint_{\Omega^{**}} |\partial_s u|^2 \,s\,dyds\,+\, \iint_{\Omega^*\setminus\Omega^{**}}
|\partial_s u|^2 \,s\,dyds\right)\,=:\m+\e\,,
\end{multline}
where
$$\Omega^{**}:=\{(y,s)\in \reu: \delta(y)< \eta s/18, 0<s<\,\ell(Q)\}\,.$$

We observe that by
\eqref{eq3.23saw}-\eqref{eq3.24saw}, we have 
$$\rho^{-1}(\Omega^{**})\subset 
\Omega_{\eta/16}\cap (2Q\times (0,2\ell(Q))\,,$$
and we note that $\Psi\equiv 1$ on the latter set.  Therefore, making the change of variable
$s=\tau(y,t)$, we find that 
$$\m\leq C_{\eta,\kappa_0}
\iint_{\reu} |(\partial_\tau u)\circ \rho|^2 \,\Psi^2\, t\, dt dy\,,$$
since, as above, $J(y,t) \approx 1$.  By \eqref{eq3.40*},
the latter term gives a contribution to \eqref{eq3.35}
that may be hidden on the LHS of \eqref{eq3.12}, if $\sigma$ is chosen small enough.

To handle the error term $\e$, we first note that by Moser's interior estimates, and the $t$-independence of $A$, we have
$$|\partial_s u(y,s)|\lesssim \frac1s\,N_*^\alpha (u)(y)\,.$$
Thus, by definition of $\Omega^*\setminus\Omega^{**}$,  we have
$$\e \leq C_{\eta,\kappa_0}\int_{2Q}\Big(N_*^\alpha (u)(y)\Big)^2 
\left(\int^{18\delta(y)/\eta}_{8\delta(y)/(3\eta)} \frac{ds}{s} +\int_{\ell(Q)}^{8\ell(Q)} \frac{ds}{s}\right)\,dy
\,\leq C_{\eta,\kappa_0} (\gamma \lambda)^2 |Q|\,,$$
where in the last step we have used \eqref{eq3.x0}
and H\"older's inequality.  This concludes our treatment of the term $\s_1$
in \eqref{eq3.31sdef}.  It remains only to treat the term $\s_2$.

To this end, we write
\begin{multline}\label{eq3.45} 2\,\s_2=
\iint_{\mathbb{R}^{n+1}_+}u_1^2\,\partial_t 
\left(\frac1J\langle A \,{\bf p},{\bf p}\rangle\right)\, \Psi^2 dt dx
=\iint_{\mathbb{R}^{n+1}_+}u_1^2\,\partial_t 
\left(\frac1J\right)\langle A \,{\bf p},{\bf p}\rangle\, \Psi^2 dt dx\\[4pt]
+\,\iint_{\mathbb{R}^{n+1}_+}u_1^2\,
\frac1J\langle  \,\partial_t {\bf p},A^*{\bf p}\rangle\, \Psi^2 dt dx
\,+\,\iint_{\mathbb{R}^{n+1}_+}u_1^2\,
\frac1J\langle  A\,{\bf p},\partial_t {\bf p}\rangle\, \Psi^2 dt dx\\[4pt]
=:I+II+III\,,
\end{multline}
where we have used that $A$ is $t$-independent.

We treat these terms in order.  We recall that $J(x,t) = 1 + \partial_t \pe\vp(x)$.  
Then 
\begin{multline*}
I = -\iint_{\mathbb{R}^{n+1}_+}u_1^2\,
\frac{\partial_t^2 \pe\vp}{J^2}\,\langle  A\,{\bf p},{\bf p}\rangle\, \Psi^2 \,dt dx\\[4pt]
= \iint_{\mathbb{R}^{n+1}_+}\partial_t\left(u_1^2\right)\,
\frac{\partial_t \pe\vp}{J^2}\,\langle  A\,{\bf p},{\bf p}\rangle\, \Psi^2\,dt dx
\,+\, \iint_{\mathbb{R}^{n+1}_+}u_1^2\,
\frac{\partial_t \pe\vp}{J^2}\,\partial_t\langle  A\,{\bf p},{\bf p}\rangle\, \Psi^2 \,dt dx\\[4pt]
+\,\iint_{\mathbb{R}^{n+1}_+}u_1^2\,
\partial_t \pe\vp\,
\partial_t\left(\frac1{J^2}\right)\,\langle  A\,{\bf p},{\bf p}\rangle\, \Psi^2 \,dt dx\\[4pt]
+\, \iint_{\mathbb{R}^{n+1}_+}u_1^2\,
\frac{\partial_t \pe\vp}{J^2}\,\langle  A\,{\bf p},{\bf p}\rangle\, \partial_t\left(\Psi^2\right) \,dt dx
\,=:\, I_1+I_2+I_3+I_4\,,
\end{multline*}
where we have used that the boundary terms vanish, since $\partial_t\pe\vp\big|_{t=0} = 0$
(as may be seen by first considering $\vp$ in the domain of $L_\|^*:=-\dv A_\|^*\nabla$, and then using a density argument).    

We recall that ${\bf p}:= (\nabla_x(\pe-I)\vp(x),-1)=(\nabla_x\tau(x,t),-1)$. 
Since $\partial_t \pe\vp$ is bounded, and $J\approx 1$, in $\Omega_0$,
the term $I_4$ may then be handled exactly like the terms $\e_{11}''$ and $\e_{12}''$.

The other terms will require some further work.  To begin,
\begin{equation}\label{eq3.47}
|I_1|\leq\, \sigma\iint_{\reu}|\partial_t u_1|^2\,|{\bf p}|^2\,\Psi^2\, t \,dt dx\,+\,
\frac{C}{\sigma} \iint_{\reu}u_1^2\,|\partial_t \pe\vp|^2\,|{\bf p}|^2\,\Psi^2\,  \frac{dt dx}{t}\,.
\end{equation}
By definition of {\bf p}, the first of these terms may be handled exactly like
\eqref{eq3.39}, and hidden on the LHS of \eqref{eq3.12}, if $\sigma$ is chosen small enough.
The second term is treated via the same dyadic decomposition as above:
\begin{equation*}
\iint_{\reu}u_1^2\,|\partial_t \pe\vp|^2\,|{\bf p}|^2\,\Psi^2\,  \frac{dt dx}{t}
\,=\,\sum_k\sum_{Q'\in \dd_k^\eta}\int_{Q'}\int_{2{-k}}^{2^{-k+1}}
u_1^2\,|\partial_t \pe\vp|^2\,|{\bf p}|^2\,\Psi^2\,  \frac{dt dx}{t}\,,
\end{equation*}
and in turn we note that
\begin{multline*}
\int_{Q'}\int_{2{-k}}^{2^{-k+1}}
u_1^2\,|\partial_t \pe\vp|^2\,|{\bf p}|^2\,\Psi^2\,  \frac{dt dx}{t}\\[4pt]
\lesssim\,\int_{2{-k}}^{2^{-k+1}}\!
\Big(\essinf_{Q'}(N_*^\alpha(u))\Big)^2
\,\left(\int_{2Q'}\!\int_{2^{-k-1}}^{2^{-k+2}}|\partial_s \p_{\eta s}^*\vp(y)|^2\,\frac{dy ds}{s}\right)
\fint_{Q'}|{\bf p}|^2\,\Psi^2\,  \frac{dxdt}{t}\\[4pt]
\lesssim\,C_{\kappa_0}
\,\left(\int_{2Q'}(N_*^\alpha(u))^2\,\int_{2^{-k-1}}^{2^{-k+2}}|\partial_s \p_{\eta s}^*\vp(y)|^2\,1_{\Omega_0}\frac{dy ds}{s}
\right)\,,
\end{multline*}
where we have used \eqref{eq3.33b}, and Moser's parabolic local interior estimates
(of course, accounting for the rescaling $t\to t^2$) in the first inequality, and
\eqref{eq3.29a} (which holds in the present situation), along with the definitions of 
$\Psi$ and $\Omega_0$ in the second.  At this point, we may sum in $Q'$ and in $k$,
and then argue as in our treatment of $\s_{12}'$ above (cf. \eqref{eq3.40s12}),  
using \eqref{eq1.katovertp}
(or rather its analogue for $\p_t^*$),
to obtain a bound on the order of $C_{\sigma,\kappa_0}(\gamma\lambda)^2|Q|$, as desired.

Next, we consider the term $I_2$, which by definition of {\bf p} satisfies the bound
\begin{equation}\label{eq3.49}
|I_2|\,\lesssim \,\iint_{\reu}u_1^2\,|\nabla_x\partial_t \pe\vp|^2\,\Psi^2\,  t\, dt dx \,+\,
\iint_{\reu}u_1^2\,|\partial_t \pe\vp|^2\,|{\bf p}|^2\,\Psi^2\,  \frac{dt dx}{t}\,.
\end{equation}  
But the terms above are both OK, since the first is the same as $\s_{12}'$ in \eqref{eq2.5},
and the second is the same as the second term on the RHS of \eqref{eq3.47}.  We therefore obtain the bound $|I_2|\lesssim (\gamma\lambda)^2|Q|$.

To conclude our treatment of term $I$, we observe that by definition of $J$, we have
\begin{equation*}
|I_3|\,\lesssim\,\iint_{\reu}u_1^2\,|\partial_t^2\, \pe\vp|^2\,\Psi^2\,  t\, dt dx \,+\,
\iint_{\reu}u_1^2\,|\partial_t \pe\vp|^2\,|{\bf p}|^2\,\Psi^2\,  \frac{dt dx}{t}\,.
\end{equation*}
Except for the $t$-derivative in place of $\nabla_x$ in the first term, this is exactly the same bound
as we had for $I_2$, and these terms may therefore be handled in exactly the same way.

Next we treat term $II$.  By definition of {\bf p}, we have $\partial_t{\bf p} = (\nabla_x\partial_t\pe\vp,0)$,
whence it follows from \eqref{eq1.hodge} that, for $x\in 5Q$,
\begin{align}\nonumber
\langle \partial_t {\bf p},A^*{\bf p}\rangle &= \langle \nabla_x\partial_t\pe\vp,A_\|^*
\nabla_x\pe\vp\rangle \,-\, \langle \nabla_x\partial_t\pe\vp,A_\|^*
\nabla_x\vp\rangle - \langle \nabla_x\partial_t\pe\vp,
{\bf c}\rangle \\[4pt]\label{eq3.50a}
&= \langle \nabla_x\partial_t\pe\vp,A_\|^*
\nabla_x\pe\vp\rangle \,-\, \langle \nabla_x\partial_t\pe\vp,{\bf h}\rangle
\end{align}
Thus,
\begin{multline*}
II =\\[4pt] \iint_{\mathbb{R}^{n+1}_+}u_1^2\,\frac1J
\,\langle  \,\nabla_x\partial_t\pe\vp,A_\|^*\nabla_x\pe\vp\rangle\, \Psi^2 dt dx\,\,
-\,\iint_{\mathbb{R}^{n+1}_+}u_1^2\,\frac1J
\,\langle  \,\nabla_x\partial_t\pe\vp,{\bf h}\rangle\, \Psi^2 dt dx\\[4pt]
=:II_1 + II_2\,.
\end{multline*}
In turn,
\begin{multline*}
II_1 \,=\,\iint_{\mathbb{R}^{n+1}_+}u_1^2\,\frac1J
  \,\left(\partial_t\pe\vp\right)\,\left(L_\|^*\pe\vp\right)\, \Psi^2 dt dx\\[4pt] 
-\,\iint_{\mathbb{R}^{n+1}_+}\partial_t\pe\vp\,\langle\,\nabla_x\left(u_1^2\,\frac1J\right)
 \,,\,A_\|^*\nabla_x\pe\vp\rangle\, \Psi^2 dt dx \\[4pt]
 -\,\iint_{\mathbb{R}^{n+1}_+}u_1^2\,\frac1J\,\partial_t\pe\vp\,\langle\,\nabla_x\left( \Psi^2\right)
 \,,\,A_\|^*\nabla_x\pe\vp\rangle\, dt dx\\[4pt]
=:II_1'+II_1''+II_1'''\,.
\end{multline*}
Since $L_\|^*\pe = -(2\eta^2 t)^{-1} \partial_t\pe$, 
the term $II_1'$ is like the second term
on the RHS of \eqref{eq3.47}, only a bit simpler, as we just have 1 in place of {\bf p}. 

Distributing $\nabla_x$, and using that $J\approx 1$, and that $\nabla_x J= \nabla_x\partial_t
\pe\vp$, we have that
\begin{multline}\label{eq3.50}
|II_1''| \leq \, \sigma \iint_{\mathbb{R}^{n+1}_+}|\nabla_x u_1|^2\, \Psi^2 \,t\,dt dx \,+\,C
\iint_{\mathbb{R}^{n+1}_+} u_1^2
 \,|\nabla_x\partial_t\pe\vp|^2\, \Psi^2 \,t\,dt dx \\[4pt]+\,C\left(\sigma^{-1}+1\right)
\iint_{\mathbb{R}^{n+1}_+}u_1^2\,|\partial_t\pe\vp|^2\,
 \,|\nabla_x\pe\vp|^2\, \Psi^2 \frac{dt dx}{t} \,.
\end{multline}
The first of these terms is bounded by \eqref{eq3.35}, and may 
therefore be treated in exactly the same way.
The second and third terms are essentially like the two terms bounding
$I_2$ in \eqref{eq3.49}, since in the last term
we may handle the factor
$|\nabla_x\pe\vp|^2$ just like $|{\bf p}|^2$, using
\eqref{eq3.29a}.

To complete our treatment of $II_1$, we observe that
$$|II_1'''|\lesssim \,\iint_{\mathbb{R}^{n+1}_+}u_1^2
\, |\nabla_x \Psi|^2\,t \,dt dx\,
 +\,\iint_{\mathbb{R}^{n+1}_+}u_1^2\,|\partial_t\pe\vp|^2\,
 \,|\nabla_x\pe\vp|^2\, \Psi^2 \frac{dt dx}{t}\,.$$
 The first of these is the same as $\s_{13}'$ in \eqref{eq2.5}, and the second is the same as the last term
 in \eqref{eq3.50}.

Next, we consider $II_2$.   Since {\bf h} is divergence free,
$$II_2 = \iint_{\mathbb{R}^{n+1}_+}\partial_t\pe\vp\,\langle\,
\nabla_x\left(\frac{u_1^2}J\right),{\bf h}\rangle\, \Psi^2 dt dx \,+\,
 \iint_{\mathbb{R}^{n+1}_+}\frac{u_1^2}J\,\partial_t\pe\vp\,
 \langle\,\nabla_x\left( \Psi^2\right),{\bf h}\rangle\, dt dx\,. $$
 The first of these terms may be treated exactly like $II_1''$ above, and the second exactly
 like $II_1'''$, since ${\bf h} = {\bf c}1_{5Q}
 +A_\|^*\nabla_x\vp$, and therefore may be handled via
 \eqref{eq3.29a}, just like the factor $\nabla_x\pe\vp$.

Last, we consider term $III$.  By an identity analogous to \eqref{eq3.50a}, we have
\begin{multline*}
III \,=\, \iint_{\mathbb{R}^{n+1}_+}u_1^2\,\frac1J
\,\langle A_\| \nabla_x\pe\vp\,,\nabla_x\partial_t\pe\vp\rangle\, \Psi^2 dt dx\\[4pt]
-\,\,\iint_{\mathbb{R}^{n+1}_+}u_1^2\,\frac1J
\,\langle\,{\bf b} +A_\|\nabla_x\vp \, ,\nabla_x\partial_t\pe\vp\rangle\, \Psi^2 dt dx\\[4pt]
=\, \iint_{\mathbb{R}^{n+1}_+}u_1^2\,\frac1J
\,\langle  \,\nabla_x\left(\pe\vp-\vp\right),A_\|^*\nabla_x\partial_t\pe\vp\rangle\, \Psi^2 dt dx\\[4pt]
-\,\,\iint_{\mathbb{R}^{n+1}_+}u_1^2\,\frac1J
\,\langle\,{\bf b}   \,,\,\nabla_x\partial_t\pe\vp\rangle\, \Psi^2 dt dx\,\,
=:\,III_1 + III_2\,.
\end{multline*}
In turn,
\begin{multline*}
III_1 \,=\, -
 \iint_{\mathbb{R}^{n+1}_+}\left(\pe\vp-\vp\right)\,\langle  \,\nabla_x\left(u_1^2\,\frac1J\,\Psi^2\right)
\,,A_\|^*\nabla_x\partial_t\pe\vp\rangle\,  dt dx\\[4pt]
-\,\,\iint_{\mathbb{R}^{n+1}_+}u_1^2\,\frac1J\,\left(\pe\vp-\vp\right)
\,\left(L_\|^*\partial_t\pe\vp\right)\, \Psi^2\, dt dx\,=:III_1' + III_1''\,.
\end{multline*}
By \eqref{eq3.18*}, we have that $|\pe\vp-\vp|\ll t$ in the support of $\Psi$.   Thus, $III_1'$,
upon distributing $\nabla_x$ over $u_1^2, 1/J$, and $\Psi^2$,  yields integrals that may be handled just like the terms $J$, $S_{12}'$ and $K$, respectively, in \eqref{eq2.5}.
To handle $III_1''$, we first note that 
\begin{equation}\label{eq3.52}
\int_{\rn}\left(\int_0^\infty|(\pe-I) F|^2 \frac{dt}{t^3}\right)^{p/2}\,dx \lesssim \|\nabla F\|_{L^p(\rn)}^p\,,
\end{equation}
as may be seen by the use of the elementary identity
$\pe-I = \int_0^{\eta t} \partial_s P^*_s ds$,
along with Hardy's inequality in $t$, to reduce matters to \eqref{eq1.katovertp}.  We further note that by \eqref{eq3.18*} and the definition of $u_1$,
$$\sup_{t>\delta(x)/\eta}|u_1(x,t)| \leq \sup_{t>0}|u(x,t)| \leq N_*^\alpha(u)(x)\,.$$
Consequently, 
\begin{multline*}
III_1''
\lesssim\int_{2Q} \Big(N_*^\alpha(u)(x)\Big)^2 
\left(\int_0^\infty|(\pe-I) \vp|^2 \frac{dt}{t^3}\right)^{1/2}
\left(\int_0^\infty|t^2\partial_t\,\pe L_\|^*\vp|^2 \,\frac{dt}{t}\right)^{1/2}\!dx\\[4pt]
\lesssim \,  \left(\int_{2Q} \Big(N_*^\alpha(u)(x)\Big)^{2(2+\eps)/\eps}\right)^{\eps/(2+\eps)}  
\|\nabla \vp\|_{2+\eps}^2\, \lesssim\, (\gamma\lambda)^2 |Q|\,,
 \end{multline*}
where we have used \eqref{eq1.katovertp}, \eqref{eq3.52}, \eqref{eq1.hodge*},
and \eqref{eq3.x0}
(with $p_0:= 2(2+\eps)/\eps$.)

It remains now only to treat term $III_2$.
To this end, we use the Hodge decomposition \eqref{eq1.hodge} to write
$${\bf b}1_{5Q} = A_\|\nabla \vpt + \widetilde{\bf h}=
\left(A_\|\nabla_x \vpt-A_\| \nabla_x \Pe\vpt\right) \,+\,A_\| \nabla_x \Pe\vpt\,+\, \widetilde{\bf h}\,,$$
where $\Pe:= e^{-(\eta t)^2L_\|}$, and where $\widetilde{\bf h}$ is divergence free.  We recall that by construction, the various estimates that we have used for $\vp$ and $\pe\vp$, 
hold also for $\vpt$ and $\Pe\vpt$.
The contribution of $\widetilde{\bf h}$ may then be handled exactly like $II_2$ above,
while the contribution of $A_\| \nabla_x \Pe\vpt$ may be handled like $II_1$ above, i.e.,
by integrating by parts in $x$ to move $\nabla_x$ away from $\partial_t \pe\vp$.
Finally, the contribution of  $\left(A_\|\nabla_x \vpt-A_\| \nabla_x \Pe\vpt\right)$ in term $III_2$
equals
$$\iint_{\mathbb{R}^{n+1}_+}u_1^2\,\frac1J
\,\langle\,\left(\nabla_x \vpt- \nabla_x \Pe\vpt\right)  
 \,,\,A_\|^*\nabla_x\partial_t\pe\vp\rangle\, \Psi dt dx\,,$$
 which can then be handled like $III_1$.
 
 \subsection{Step 3:  from large $p$ to arbitrary $p$} 
 At this point, we observe that our work in the previous two subsections yields the 
 $S<N$ bound \eqref{eq1.SN}, for all finite $p>p_0$, where as above $p_0=2(2+\eps)/\eps$,
 and $2+\eps$ is the exponent in the Hodge decomposition \eqref{eq1.hodge}-\eqref{eq1.hodge**}
 (cf. \eqref{eq3.12++}.)
 We now proceed to remove the restriction on $p$, following \cite{FS}. Let us observe that 
 the standard pullback mechanism, as used in the proof of Corollary
 \ref{c2}, implies that on any Lipschitz graph domain
 $\Omega_\psi$ as in  \eqref{eq1.2*}, we obtain from \eqref{eq3.12++} the bound
\begin{equation}\label{eq3.65*}
\|S_\psi(u)\|_{L^p(\po_\psi)} \leq C_p \|N_{*,\psi}(u)\|_{L^p(\po_\psi)}\,,
\qquad p\left({\|\nabla\psi\|_\infty}\right)<p<\infty\,,
\end{equation}
for $Lu=0$ in $\Omega_\psi$, 
where $S_\psi(u),\,N_{*,\psi}(u)$ are the square function and non-tangential
maximal function relative to $\Omega_\psi$
(cf. \eqref{eq1.squarelip}-\eqref{eq1.ntlip}.)   For the moment, the range of $p$ depends upon the Lipschitz constant of $\psi$, because the ellipticity
of the pullback matrix depends upon this Lipschitz constant, and 
in turn,  the parameter $\eps$ that appears in
the Hodge decomposition, and in the definition of $p_0$, depends upon ellipticity.
The conclusion of Theorem \ref{th1} then follows immediately from \eqref{eq3.65*} and the following
 
 \begin{lemma} Suppose that for every Lipschitz graph domain $\Omega_\psi$, 
 and every elliptic $t$-independent matrix $A$ with real bounded 
 measurable coefficients, there exist constants $C$ and $q\in (0,\infty)$, 
 depending on dimension, ellipticity, and $\|\nabla\psi\|_\infty$, 
 such that   any solution  $u$ to
the equation $-{\rm div}_{x,t} A\nabla_{x,t}u=0$ in $\Omega_\psi$ satisfies 
\begin{equation}\label{lpap1}
\|S_\psi u\|_{L^q(\po_\psi)}\leq C \|N_{*,\psi} u\|_{L^q(\po_\psi)}\,.
\end{equation}
Then the $S<N$ estimate \eqref{eq1.SN} is valid for all $p\in (0,q)$.
 \end{lemma}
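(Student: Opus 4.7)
My plan is to deduce \eqref{eq1.SN} from the single-exponent hypothesis \eqref{lpap1} by a good-$\lambda$ extrapolation carried out on an adaptively chosen family of Lipschitz sawtooth domains, in the spirit of the classical Fefferman--Stein argument. Fix a solution $u$ with $Lu=0$ in $\reu$ and a parameter $\gamma\in(0,1/2)$ to be chosen later; for each level $\lambda>0$, set
\[
F_\lambda:=\{x\in\rn:N_*(u)(x)\leq \gamma\lambda\}\,,\qquad E_\lambda:=\rn\setminus F_\lambda\,.
\]

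The first step is to construct a Lipschitz function $\psi_\lambda:\rn\to[0,\infty)$ with $\|\nabla\psi_\lambda\|_\infty\leq\kappa$, for a fixed small $\kappa$ chosen less than the reciprocal of the aperture of the cones defining $S$ and $N_*$, with $\psi_\lambda\equiv 0$ on $F_\lambda$ and $\psi_\lambda(x)\approx \dist(x,F_\lambda)$ on $E_\lambda$. Two geometric consequences follow. First, for every $x\in F_\lambda$ the full cone $\Gamma(x)\subset\reu$ lies inside the sawtooth $\Omega_{\psi_\lambda}$ (since $\psi_\lambda(y)\leq\kappa|y-x|<|y-x|<t$ for $(y,t)\in\Gamma(x)$), so that $S(u)(x)=S_{\psi_\lambda}(u)(x,0)$ pointwise on $F_\lambda$. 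Second, by tilting cones at boundary points on the slope back to cones based at the closest point of $F_\lambda$, one obtains the uniform bound $N_{*,\psi_\lambda}(u)\leq C\gamma\lambda$ on all of $\po_{\psi_\lambda}$. Crucially, since $\|\nabla\psi_\lambda\|_\infty\leq\kappa$ is independent of $\lambda$, the constants $C$ and $q$ furnished by the hypothesis \eqref{lpap1} on $\Omega_{\psi_\lambda}$ are uniform in $\lambda$ as well.

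Applying \eqref{lpap1} on $\Omega_{\psi_\lambda}$ (in a suitable localized form; see below) would then yield the good-$\lambda$ distributional estimate
\[
\bigl|\{x\in F_\lambda:S(u)(x)>\lambda\}\bigr|\,\leq\,C\gamma^q\,|E_\lambda|\,.
\]
Combining with the trivial inclusion $\{S(u)>\lambda\}\subset E_\lambda\cup(\{S(u)>\lambda\}\cap F_\lambda)$, and integrating against $p\lambda^{p-1}\,d\lambda$ via Fubini, gives
\[
\|S(u)\|_p^p\,\leq\,\bigl(\gamma^{-p}+C\gamma^{q-p}\bigr)\,\|N_*(u)\|_p^p\,,
\]
where the factor $\gamma^{q-p}$ arises from the computation $\int_0^\infty p\lambda^{p-1}|E_\lambda|\,d\lambda=\gamma^{-p}\|N_*(u)\|_p^p$. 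The right-hand side is finite precisely when $p<q$; this is the source of the $p<q$ restriction in the statement. Fixing any admissible $\gamma$ (e.g.\ $\gamma=1/4$) then yields \eqref{eq1.SN}.

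The principal technical obstacle is that $\po_{\psi_\lambda}$ has infinite surface measure whenever $|F_\lambda|=\infty$, so that the global \eqref{lpap1} is vacuous when applied directly: both $\|S_{\psi_\lambda}(u)\|_{L^q(\po_{\psi_\lambda})}$ and $\|N_{*,\psi_\lambda}(u)\|_{L^q(\po_{\psi_\lambda})}$ are \emph{a priori} infinite. I would circumvent this either by exhausting $\Omega_{\psi_\lambda}$ by a sequence of bounded Lipschitz sub-domains (and passing to the limit by monotone convergence), or, equivalently, by first extracting from \eqref{lpap1} a local version on surface balls via interior De Giorgi/Nash/Moser regularity and the fact that $|u|\leq N_{*,\psi_\lambda}(u)\leq C\gamma\lambda$ uniformly on $\Omega_{\psi_\lambda}$. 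In either case the final bound above is unaffected, and the extrapolation proceeds as indicated.
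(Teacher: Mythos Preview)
Your overall strategy---build a sawtooth Lipschitz graph domain over $F_\lambda$, apply the hypothesis \eqref{lpap1} there, and then integrate the resulting distributional inequality---is exactly the Fefferman--Stein argument the paper uses. However, the specific distributional inequality you assert,
\[
\bigl|\{x\in F_\lambda: S(u)(x)>\lambda\}\bigr|\le C\gamma^q\,|E_\lambda|\,,
\]
is too strong and does not follow from \eqref{lpap1}. Chebyshev together with \eqref{lpap1} gives
\[
\bigl|\{x\in F_\lambda: S(u)>\lambda\}\bigr|\,\lesssim\, \lambda^{-q}\!\int_{\po_{\psi_\lambda}}\!(N_{*,\psi_\lambda}u)^q\,d\sigma
=\lambda^{-q}\!\int_{F_\lambda}\!(N_*u)^q\,dx \,+\, \lambda^{-q}\!\int_{\po_{\psi_\lambda}\setminus F_\lambda}\!(N_{*,\psi_\lambda}u)^q\,d\sigma\,.
\]
The second integral is indeed $\le C(\gamma\lambda)^q|E_\lambda|$, since the graph over $E_\lambda$ has surface measure $\approx|E_\lambda|$. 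But the first integral---the contribution from the flat portion $F_\lambda$ of the boundary---is \emph{not} controlled by $|E_\lambda|$: on $F_\lambda$ one only knows $N_*u\le\gamma\lambda$, so that term can be as large as $(\gamma\lambda)^q|F_\lambda|$, and $|F_\lambda|=\infty$ whenever $N_*u\in L^p$. Your proposed localization does not help; the same term survives on every bounded piece and blows up in the limit.

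The correct fix (this is what the paper does, taking $\gamma=1$) is simply to \emph{keep} that term rather than absorb it into $|E_\lambda|$. One writes $\int_{F_\lambda}(N_*u)^q\,dx\le q\int_0^{\gamma\lambda}t^{q-1}|\{N_*u>t\}|\,dt$ via the layer-cake formula, and then integrates the full inequality
\[
|\{Su>\lambda\}|\,\lesssim\, |\{N_*u>\gamma\lambda\}|\,+\,\lambda^{-q}\int_0^{\gamma\lambda}t^{q-1}|\{N_*u>t\}|\,dt
\]
against $p\lambda^{p-1}\,d\lambda$; Fubini and the convergence of $\int_{t/\gamma}^\infty\lambda^{p-q-1}\,d\lambda$ for $p<q$ give $\|Su\|_p^p\lesssim\|N_*u\|_p^p$. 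Observe also that with this term retained, the right-hand side of \eqref{lpap1} on $\Omega_{\psi_\lambda}$ is actually finite (assuming, as one may, that $\|N_*u\|_p<\infty$), so the ``principal technical obstacle'' you flag is not really present: no localization of the hypothesis is required.
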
 
 

\bp We follow the argument of \cite{FS}.    Set the aperture of the cone defining $N_*(u)$ to be 2.
Fix any $\lambda>0$ and let 
\begin{equation*}
F_\lambda:=\{x\in \rn:\,N_*u(x)\leq \lambda\}.
\end{equation*}

\noindent Then the distribution function $\tau_{N_*u}(\lambda):=|F_\lambda^c|$. 
Denote by ${\mathcal R}$ an (infinite) saw-tooth region above $F_\lambda$, i.e., 
${\mathcal R}={\mathcal R}(F_\lambda):=\cup_{x\in F_\lambda} \Gamma(x)$, where
the vertical cones $\Gamma(x)$ have aperture 1 and vertex at $x\in\rn$. 
Clearly, ${\mathcal R}$ is a Lipschitz graph domain (with boundary given by the graph of
$\psi(x):= \dist(x,F_\lambda)$), with Lipschitz constant 1, 
so, in particular, \eqref{lpap1} holds in ${\mathcal R}(F_\lambda)$ for some $q<\infty$.
Furthermore, we may take the cones defining $S_\psi$ and $N_{*,\psi}$ to have aperture 1/2.

Let $\tau_{S(u)}:=\{x\in\rn:S(u)>\lambda\}$, where we have fixed the aperture of the cone defining
$S(u)$ to be 1/2.  Then 
\begin{eqnarray*}
\tau_{Su}(\lambda)&=&|\{x\in F_\lambda:\, Su(x)>\lambda\}|+|\{x\in F_\lambda^c:\, 
Su(x)>\lambda\}|\nonumber \\[4pt]
&\leq & \frac{C}{\lambda^q}\int_{F_\lambda}\left(Su(x)\right)^q\,dx+\tau_{N_*u}(\lambda).
\end{eqnarray*}

\noindent However, due to \eqref{lpap1} on ${\mathcal R}(F_\lambda)$, 
\begin{eqnarray*}
\int_{F_\lambda}\left(Su(x)\right)^q\,dx &\leq& \int_{\partial{\mathcal R}(F_\lambda)}
\left(S_\psi u(x)\right)^q\,d\sigma (x)\leq \int_{\partial{\mathcal R}(F_\lambda)}\left(N_{*,\psi}u(x)\right)^q\,d\sigma(x)\nonumber \\[4pt]
&\lesssim &\int_{F_\lambda}\left(N_*u(x)\right)^q\,dx + \int_{\partial{\mathcal R}(F_\lambda)\setminus F_\lambda}\left(N_{*,\psi}u(x)\right)^q\,d\sigma (x),
\end{eqnarray*}
where $d\sigma$ is surface measure on the Lipschitz graph $t=\psi(x)$.
\noindent However, 
\begin{equation*}
\int_{F_\lambda}
\left(N_*u(x)\right)^q\,dx \leq C \int_0^\lambda t^{q-1}\tau_{N_*u}(t)\,dt. 
\end{equation*}

\noindent Furthermore, any point $x\in {\mathcal R}(F_\lambda)$ belongs to some cone with a vertex in $F_\lambda$. Since $N_*u\leq \lambda$ on $F_\lambda$, it follows that $|u(x)|\leq \lambda$ 
for any $x\in {\mathcal R}(F_\lambda)$, and therefore, $N_{*,\psi}u(x)\leq \lambda$ for any $x\in \partial  {\mathcal R}(F_\lambda)$. Hence, 
\begin{equation*}
\int_{\partial{\mathcal R}(F_\lambda)\setminus F_\lambda}\left(N_{*,\psi}u(x)\right)^q\,d\sigma(x) 
\leq C \lambda^q |\partial{\mathcal R}(F_\lambda)\setminus F_\lambda| \leq C \lambda^q 
|F_\lambda^c| = C \lambda^q \tau_{N_*u}(\lambda).
\end{equation*}
All in all, we have 
\begin{equation*}
\tau_{Su}(\lambda)\leq C \tau_{N_*u}(\lambda)+C 
\lambda^{-q}\int_0^\lambda t^{q-1}\tau_{N_*u}(t)\,dt.
\end{equation*}
Consequently, 
\begin{multline*}
\|Su\|_{L^p(\rn)}^p=C \int_0^{\infty}\lambda^{p-1}\tau_{Su}(\lambda)\,d\lambda \nonumber\\[4pt] 
\leq C \int_0^{\infty}\lambda^{p-1}\tau_{N_*u}(\lambda)\,d\lambda+
C \int_0^{\infty}\lambda^{p-q-1}\int_0^\lambda t^{q-1}\tau_{N_*u}(t)\,dtd\lambda 
\leq C \|N_*u\|_{L^p(\rn)}^p,
\end{multline*}
provided that $p<q$. \ep

\section{Proof of Theorem \ref{th2}:  local ``$N<S$" bounds}\label{sNS}
In this section, taking the global $S/N$ bounds,  as expressed in \eqref{eq1.10}
and \eqref{eq1.11}, as our starting point, we shall establish the local 
$N<S$ estimate as stated in Theorem \ref{th2},
following the proof of \cite[Theorem 3.18]{KKPT} very closely.
We shall prove Theorem \ref{th2} in the special case that the bounded
solution $u$ is continuous on the closure of $\reu$.  Of course, we shall obtain
the desired estimate \eqref{eq1.NSloc} with bounds depending only on dimension and ellipticity.
Eventually, in Section \ref{s-epp-app}, we shall see that, in order to prove Theorem \ref{th3},
it is enough to verify  \eqref{eq1.NSloc} in the sense of an {\it a priori} bound,
for solutions that are continuous up to the boundary.   On the other hand, {\it a posteriori},
with Theorem \ref{th3} in hand, the interested reader 
could revisit the arguments of the present section,
which continue to work with continuity at the boundary replaced by non-tangential convergence a.e. 
($dx$), to obtain Theorem \ref{th2} in the general case.  We omit the details, except to note that,
by the Fatou theorem of \cite{CFMS} (whose proof carries over, {\it mutatis mutandi}, 
to the case of non-symmetric coefficients),
a bounded solution has a non-tangential trace a.e. ($d\omega)$,
and thus, in the presence of Theorem \ref{th3}, also a.e. ($dx$).

Consider now a solution $u$ of the equation $Lu=0$ in $\reu$, which is
bounded and continuous on $\overline{\reu}$.  
We fix a cube $Q\subset \rn$, a constant $\theta\in (0,1)$, and recall that
$\theta Q$ is the cube concentric with $Q$, of side length $\theta\, \ell(Q)$.
We further fix constants 
$\theta_0,\theta_1...,\theta_6$ satisfying
$0<\theta<\theta_0<\theta_1<...
<\theta_6<1$.   Define a Lipschitz function $\psi:\rn\to
[0,\infty)$ such that $\|\nabla\psi\|_\infty\leq\eps_0$,
where $\eps_0$ is a small positive number to be chosen, 
$\psi\equiv 0$ on $\theta_0 Q$ and on $\rn\setminus Q$, and $\psi>0$
on $\theta_6Q\setminus \overline{\theta_0 Q}$.  In addition, we may suppose that
$\psi(x)\approx \ell(Q)$ on $\theta_5Q\setminus \theta_1Q$
(with the implicit constants depending on $\eps_0$).  In this section, we shall find it convenient
to work with the following variant of the non-tangential maximal function:
\begin{equation}\label{eq4.0-}
\widetilde{N}_*(w)(x)=\widetilde{N}^\gamma_*(w)(x):= 
\sup_{t>0} \frac1{|B_\gamma(x,t)|}\iint_{B_\gamma(x,t)}|w(y,s)|\,dy ds\,,
\end{equation} 
where $B_\gamma(x,t)$ is the ball with center $(x,t)$ and radius $\gamma t$, with $0<\gamma<1$.
We note that by Moser's interior estimates, if $Lu = 0$ in $\reu$, then
$N_*(u) \lesssim \widetilde{N}_*(u)$, pointwise, provided that the aperture of the cone defining
$N_*(u)$ is sufficiently small, depending on $\gamma$.

We recall that $R_Q$ is the ``short
Carleson box" above
$Q$ (cf. \eqref {eq1.cboxshort}), and we consider the domain $\Omega\subset\Omega_\psi$
(where $\Omega_\psi$ is the usual graph domain as in \eqref{eq1.2*}),  given by
$$\Omega:= \left\{(x,t):  x\in \theta_5 Q,\, \psi(x)<t<\psi(x) + \theta_5\ell(Q)/2\right\}\,.$$
We observe that $\Omega\subset R_Q$, provided that $\eps_0$ is chosen sufficiently small,
depending upon dimension and $\theta_5$.  
Let $K:= \po\setminus\{(x,\psi(x)): x\in \theta_1Q\}$.  We note that $K\subset\subset R_Q$, with
$\dist(K,\partial R_Q) \approx\ell(Q)$ (again provided that $\eps_0$ is small enough.) 
Let $\Phi_1\in C^\infty_0(\theta_2 Q)$, with $0\leq\Phi_1\leq 1$, and $\Phi_1 \equiv 1$ on 
$\theta_1 Q$.   We split $u=u_1+u_2$ in $\Omega$, where $Lu_i = 0$ in $\Omega$
and where $u_1,u_2$ are continuous and bounded in $\overline{\Omega}$, with
$$u_2\big|_{\po} = u(x,\psi(x)) \,\Phi_1(x)\,,$$
on $\{(x,\psi(x))\}\cap\po$, and zero otherwise on $\po$.   Note that
\begin{equation}\label{eq4.0}
\sup_\Omega |u_1|\leq\sup_{\po} |u_1|  \leq \sup_K |u|\,.
\end{equation} 
Consequently, 
\begin{equation}\label{eq4.1}\fint_{\theta Q} \widetilde{N}_{*,Q}(u_1)^2\, dx \leq \Big(\sup_K |u|\Big)^2\,,
\end{equation}
where the ``truncated" maximal function $\widetilde{N}_{*,Q}(u)$ is defined 
as in \eqref{eq4.0-}, except that we now consider a restricted supremum over $0<t\lesssim \ell(Q)$.
Moreover, by Fubini's theorem and Caccioppoli's inequality at the boundary,
\begin{equation}\label{eq4.2}\fint_{\theta Q} S_Q(u_1)^2(x) \,dx\lesssim
\sup_{0<t<c\ell(Q)}\fint_{\theta_0 Q}  |u_1(y,t)|^2 dy \lesssim \Big(\sup_K |u|\Big)^2\,,
\end{equation}
where $S_Q$ is defined with respect to cones $\Gamma_Q(x)$, which have been truncated
at height $\approx\ell(Q)$, so that $\Gamma_Q(x)\subset\Omega$, for $x\in \theta Q$, 
and where the implicit constants depend upon $\theta$ and $\theta_0$.

We now consider $u_2$.  Let $\Phi\in C^\infty_0(\theta_4 Q)$, with 
$0\leq\Phi\leq 1$, and $\Phi \equiv 1$
on $\theta_3 Q$.  Let $\mu \in C^\infty_0(\RR)$, with $\mu$ supported in $|t|<\theta_4 \ell(Q)/4$,
$\mu(t) \equiv 1$ for $|t|<\theta_4 \ell(Q)/8$, and set
$$v(x,t):= \Phi(x)\,\mu(t-\psi(x))\,u_2(x,t)\,.$$
As above, let $\Omega_\psi=\{t>\psi(x)\}$, 
and decompose $v=v_1+v_2$ in $\Omega_\psi$,
where $v_1$ is bounded and continuous in $\overline{\Omega_\psi}$, and 
solves
\begin{equation}
\begin{cases} Lv_1=0\quad \text{ in }\,\Omega_\psi\\ 
v_1\big|_{\po_\psi} = v\big|_{\po_\psi}\,,
\end{cases}\label{eq4.3}\end{equation}
while $Lv_2=Lv$ in $\Omega_\psi$, with $v_2|_{\po_\psi} = 0$.
We note that the solution $v_1$ may be constructed so that $v_1\to 0$ at infinity,
since its boundary data has compact support.
We now claim that there is a set $F\subset\Omega$,
with $\dist(F,\partial R_Q)\approx \ell(Q)$, such that
\begin{equation}\label{eq4.4}
\int_{\po_\psi}\left(\widetilde{N}_{*,\psi}(v_2)^2 +
S_\psi(v_2)^2\right)\, dx \lesssim |Q|\,\Big(\sup_F |u_2|\Big)^2\,,
\end{equation}
where $\widetilde{N}_{*,\psi},\,S_\psi$ are defined relative to $\Omega_\psi$
(cf. \eqref{eq4.0-} and \eqref{eq1.squarelip};  in the case of $\widetilde{N}_{*,\psi}$, the ball $B_\gamma(x,t)$ now has radius equal to $\gamma(t-\psi(x))$, with $\gamma$ sufficiently small depending
on $\|\nabla\psi\|_\infty$.)  Let us momentarily take this claim for granted.
By \eqref{eq1.11}, 
we have
\begin{equation*}
\int_{\po_\psi} \widetilde{N}_{*,\psi}(v_1)^2  \lesssim 
\int_{\po_\psi} S_{\psi}(v_1)^2 
\lesssim\int_{\po_\psi} S_{\psi}(v)^2\,+\,\int_{\po_\psi} S_{\psi}(v_2)^2\,,
\end{equation*}
where we have used the pointwise bound $\widetilde{N}_{*,\psi}(w)\leq N_{*,\psi}(w)$.
We observe that
\begin{equation*}\nabla v\, =\, \Phi(x)\,\mu(t-\psi(x))\,\nabla u_2(x,t)\,+\,
\nabla \Big(\Phi(x)\,\mu(t-\psi(x))\Big)\,u_2(x,t)
=: {\bf V}_1 +{\bf V}_2\,,
\end{equation*}
and in turn,
\begin{multline*}
\nabla \Big(\Phi(x)\,\mu(t-\psi(x))\Big)\\[4pt] =
\Big(\nabla_x\Phi(x)\,\mu(t-\psi(x)) - \Phi(x)\,\mu'(t-\psi(x))\,\nabla_x\psi(x),
\Phi(x)\,\mu'(t-\psi(x))\Big)\,.%
\end{multline*}
Thus,  $\nabla (\Phi(x)\,\mu(t-\psi(x)))$ (restricted to $\Omega_\psi$), 
and hence also ${\bf V}_2$, are supported in
\begin{multline}\label{eq4.6}
\Big\{(x,t): x\in \theta_4 Q \setminus\theta_3 Q, \,0<t-\psi(x)<\theta_4\ell(Q)/4\Big\}\\[4pt]
\cup \Big\{(x,t): x\in \theta_4 Q , \,\theta_4\ell(Q)/8<t-\psi(x)<\theta_4\ell(Q)/4\Big\}\,=:E_1\cup E_2\,.
\end{multline}
Consequently, there is a set $F\subset\Omega$, with $\dist(F,\partial R_Q)
\approx \ell(Q)$, such that $|{\bf V}_2| \lesssim \ell(Q)^{-1} \sup_F |u_2|,$ whence it follows that
\begin{equation}\label{eq4.7}
\int_{\po_\psi} S_{\psi}(v)^2 \,d\sigma\lesssim
 \int_{\theta_5 Q} S_Q(u_2)^2(x)\,dx \,+\,|Q|\,\Big(\sup_F |u_2|\Big)^2\,,
 \end{equation}
 provided that the constant $\eps_0$ (which controls $\|\nabla\psi\|_\infty$) is sufficiently small.
 Moreover
 $$\fint_{\theta Q} \widetilde{N}^{\gamma}_{*,Q}(u_2)^2(x)\,dx 
 \lesssim \frac1{|Q|}\int_{\po_\psi}\widetilde{N}^{\gamma}_\psi(v)^2 d\sigma + 
 \Big(\sup_F |u_2|\Big)^2\,,$$  
if $\gamma$ is sufficiently small.
Indeed, in that case, for $x\in \theta Q$, and $0<t\lesssim \ell(Q)$,  we have that 
$(u_2-v))1_{B_\gamma(x,t)}$ is supported in a region of Whitney type, i.e., so that $t\approx \ell(Q)$,
inside $\Omega$.
Gathering these estimates, we obtain
\begin{multline*}
\fint_{\theta Q} \widetilde{N}_{*,Q}(u_2)^2(x)\,dx\\[4pt] \lesssim  \,\Big(\sup_F |u_2|\Big)^2
\,+\,  \frac1{|Q|}\int_{\po_\psi} \widetilde{N}_{*,\psi}(v_1)^2 d\sigma\,
+ \, \frac1{|Q|}\int_{\po_\psi} \widetilde{N}_{*,\psi}(v_2)^2 d\sigma
\\[4pt] \lesssim  \,\Big(\sup_F |u_2|\Big)^2
\,+\, \fint_{\theta_5 Q} S_Q(u_2)^2(x)\,dx \,+\,\frac1{|Q|}\int_{\po_\psi} S_{\psi}(v_2)^2\,
+ \, \frac1{|Q|}\int_{\po_\psi} \widetilde{N}_{*,\psi}(v_2)^2\,\\[4pt]
\lesssim  \,\Big(\sup_F |u_2|\Big)^2
\,+\, \fint_{\theta_5 Q} S_Q(u_2)^2(x)\,dx\,,
\end{multline*}
where in the last step we have used the claim \eqref{eq4.4} (and where we have also used 
that the set $F$ 
may be taken to be the same in \eqref{eq4.4} and \eqref{eq4.7}:  just take the union of the two,
or, see the proof of \eqref{eq4.4} below.)
Combining the latter estimate with \eqref{eq4.0}-\eqref{eq4.2}, 
and setting $K_Q:= F\cup K$, we have
\begin{multline*}
\fint_{\theta Q}  \widetilde{N}_{*,Q}(u)^2(x)\,dx\lesssim
\fint_{\theta Q}  \widetilde{N}_{*,Q}(u_1)^2(x)\,dx+
\fint_{\theta Q}  \widetilde{N}_{*,Q}(u_2)^2(x)\,dx\\[4pt]
\lesssim  \,\Big(\sup_{K} |u|\Big)^2\,+ \,\Big(\sup_F |u_2|\Big)^2
\,+\, \fint_{\theta_5 Q} S_Q(u_2)^2(x)\,dx\\[4pt]
\lesssim  \,\Big(\sup_{K_Q} |u|\Big)^2\,+ \,\Big(\sup_F |u_1|\Big)^2\,+\,
\fint_{\theta_5 Q} S_Q(u)^2(x)\,dx\,+\,\fint_{\theta_5 Q} S_Q(u_1)^2(x)\,dx\\[4pt]
\lesssim  \,\Big(\sup_{K_Q} |u|\Big)^2\,+ \,
\fint_{\theta_5 Q} S_Q(u)^2(x)\,dx\,,
\end{multline*}
whence \eqref{eq1.NSloc}, the conclusion of Theorem \ref{th2}, follows directly.

It remains to prove the claim \eqref{eq4.4}.  To this end, 
we shall require the following lemma.
For notational convenience, 
we write $X=(x,t)$ to denote points in $\ree$.
\begin{lemma}\label{l4.8}  Let $x_0\in \rn$, $r>0$, and set $X_0:=(x_0,0)$ and
$B:= B(X_0,r)$.  Let $\kappa B$ denote the concentric dilate of $B$ by a factor of $\kappa$.
Suppose that $w$ is bounded and continuous on $\overline{\reu}$,
with $w\to 0$ at infinity, that
$Lw=0$ in $\reu\setminus B$, and that $w|_{\rn\setminus B} \equiv 0$.
Set $$\m:= \|w\|_{L^\infty\left(\reu\cap (3B\setminus 2B\right)}\,.$$
Then there exist constants $C$ and $\nu >0$, depending only upon dimension and ellipticity,
such that
$$|w(X)|\leq C \m
\left(\frac{r}{|X-X_0|}\right)^{n-1+\nu}\,,\qquad |X-X_0|\geq 3r\,.$$
\end{lemma}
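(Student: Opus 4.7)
The plan is to reduce, via the maximum principle, to a decay estimate for $L$-harmonic measure on $\reu$, and then to extract the exponent $n-1+\nu$ by combining the Caffarelli--Fabes--Mortola--Salsa (CFMS) comparison of harmonic measure with the Green function together with boundary De~Giorgi/Nash H\"older regularity.

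First I would work in the unbounded region $\Omega' := \reu \setminus \overline{2B}$, in which $w$ satisfies $Lw=0$, has $|w|\le \m$ on $\partial(2B)\cap\reu$ (which lies in $3B\setminus 2B$), vanishes on $\rn\setminus 2B$, and decays to $0$ at infinity. Setting $v(X):=\omega^X(\Delta(x_0,3r))$, where $\omega^X$ denotes the $L$-harmonic measure on $\reu$, one has $v\ge c_0>0$ uniformly on the hemisphere $\partial(2B)\cap\reu$: this is a standard non-degeneracy fact, which can be seen by applying boundary H\"older regularity to $1-v$ (which vanishes on $\Delta(x_0,2r)$) for points close to $\rn$, and by a short Harnack chain to the corkscrew $X^*_{3r}:=(x_0,3r)$ for points bounded away from $\rn$. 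Since $v\ge 0$ everywhere on $\partial\Omega'$, the maximum principle in $\Omega'$ (using the decay of $w$ at infinity) gives
\[
|w(X)|\le C\m\,\omega^X(\Delta(x_0,3r)),\qquad X\in\Omega'.
\]

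Next, for $\rho:=|X-X_0|\ge C_0 r$ (say $C_0=24$), I would estimate $\omega^X(\Delta(x_0,3r))$ via the CFMS relation
\[
\omega^X(\Delta(x_0,3r))\approx r^{n-1}\,G_L(X,X^*_{3r}),
\]
which is valid for divergence form operators with bounded measurable, possibly non-symmetric coefficients. Regarded as a function of its second argument (with $X$ fixed), $G_L(X,\cdot)$ is $L^*$-harmonic in $\reu\setminus\{X\}$ and vanishes on $\rn$. On the ball $\widetilde B:=B((x_0,0),\rho/2)$, which does not contain $X$, the standard pointwise bound gives $\sup_{\widetilde B\cap\reu}G_L(X,\cdot)\le C\rho^{-(n-1)}$, since $|X-Z|\ge \rho/2$ for $Z\in\widetilde B$. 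Applying boundary De~Giorgi/Nash H\"older regularity for $L^*$ at $X^*_{3r}$ (noting $|X^*_{3r}-(x_0,0)|=3r\le\rho/8$) then yields
\[
G_L(X,X^*_{3r})\le C\left(\frac{r}{\rho}\right)^{\!\nu}\rho^{-(n-1)},
\]
and hence $\omega^X(\Delta(x_0,3r))\le C(r/\rho)^{n-1+\nu}$, giving the desired decay in this range.

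The remaining range $3r\le\rho\le C_0 r$ is handled trivially: the maximum principle in $\reu\setminus\overline{3B}$ gives $|w|\le\m$ throughout, while $(r/\rho)^{n-1+\nu}$ is bounded below by a positive constant there. The main technical obstacle is invoking the CFMS comparison for non-symmetric $t$-independent operators; this is by now a classical consequence of the maximum principle, Harnack's inequality, boundary H\"older regularity, and the standard pointwise Green function bound $G_L(X,Y)\lesssim|X-Y|^{-(n-1)}$, all of which hold for the class of $L$ considered here.
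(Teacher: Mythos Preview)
Your argument is correct, and takes a genuinely different route from the paper's own proof. The paper normalizes to the unit ball, reduces to $w\geq 0$ by splitting boundary data, and uses the \emph{fundamental solution} $\Gamma(X,0)\approx |X|^{1-n}$ as a barrier: first the maximum principle gives $w\leq C_0\Gamma$ outside $2B$, and then an elementary iteration shows that $w\leq (1-\delta)^k C_0\Gamma$ outside $2^{k+1}B$. The gain of $(1-\delta)$ at each step comes from applying boundary H\"older continuity to $w$ near $\rn$ (where it vanishes) to get $w\leq \tfrac12 w_0$ on a small cap, and then spreading this via Harnack's inequality applied to $w_0-w$. By contrast, you bound $|w|$ by $L$-harmonic measure of a surface ball, invoke the CFMS comparison $\omega^X(\Delta)\lesssim r^{n-1}G_L(X,X^*)$, and then apply boundary De Giorgi/Nash regularity to $G_L(X,\cdot)$ (an $L^*$-solution vanishing on $\rn$) to extract the extra factor $(r/\rho)^\nu$ in one stroke.

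Both arguments rest on the same primitive ingredients (maximum principle, Harnack, boundary H\"older regularity), so there is no circularity in your appeal to CFMS: the upper bound $\omega^X(\Delta)\lesssim r^{n-1}G_L(X,X^*)$ follows, for non-symmetric $L$, from the Carleson estimate, which in turn needs only those primitives and the pointwise Green bound $G_L\lesssim |X-Y|^{1-n}$. The paper's proof buys self-containment---no need to develop or cite Green function and harmonic measure estimates---while yours is shorter once the CFMS machinery is granted, and makes transparent that the exponent $\nu$ is precisely the boundary De Giorgi/Nash exponent for $L^*$.
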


\begin{remark} We note that in the case that $L$ is symmetric, Lemma 4.9 is a 
well-known classical result of Serrin and Weinberger \cite{SW}.  
However, their proof does not carry over
to the non-symmetric case, therefore we shall supply a proof below.
\end{remark}

We defer for the moment the proof of the lemma.

Recall that $Lv_2=Lv$ in $\Omega_\psi$, 
that $Lu_2=0$ in $\Omega$, and that $\Phi(x)\mu(t-\psi(x))1_{\Omega_\psi}(x,t)$ is 
supported in $\Omega$.  Therefore,
\begin{equation}\label{eq4.9*}
Lv_2 = \dv\Big( A\nabla\big(\Phi \, \mu\big)\, u_2\Big) + \nabla(\Phi\mu)\cdot
A\nabla u_2\,=:\dv {\bf f} + g\,.
\end{equation}
Recall also that $\nabla(\Phi\mu)$ (restricted to $\Omega_\psi$) 
is supported in the union $E_1\cup E_2$ of the sets defined in \eqref{eq4.6}.  We observe that,
by construction,  $u_2|_{\po}$ is supported in $\po\cap\po_\psi$, and
$\supp u_2(x,\psi(x))\subset \theta_2 Q$, while
$E_1\cup E_2\subset \Omega$, with $\dist(E_2,\po\cap\po_\psi)\approx \ell(Q)$, and 
$1_{E_1}(x,\psi(x) \subset \theta_4 Q\setminus\theta_3 Q$.  Therefore,
by Caccioppoli's inequality at the boundary,
we have that
\begin{multline}\label{eq4.10*}
\iint|g|^2 dx dt \lesssim \ell(Q)^{-2}\iint_{E_1\cup E_2}|\nabla u_2|^2 dx dt \lesssim 
\ell(Q)^{-4} \iint_{E_1^*\cup E_2^*}|u_2|^2 dx dt\\[4pt]
\lesssim \ell(Q)^{n-3}\,\Big(\sup_F |u_2|\Big)^2\,,
\end{multline}
where $E_i^*\subset \Omega$ is a slightly fattened version of $E_i$, with
$ E^*_1\cup E^*_2\subseteq F \subset\Omega$, and $F\subset \subset R_Q$,
with $\dist(F,\partial R_Q)\approx\ell(Q)$.
Moreover, we have that
\begin{equation}\label{eq4.11*}
\|{\bf f}\|_{\infty} \lesssim \ell(Q)^{-1} \sup_F |u_2|\,.
\end{equation} 
Since $v_2$ vanishes on $\po_\psi$,
it follows from \eqref{eq4.9*} that
$$v_2 = L_D^{-1}\Big(\dv {\bf f} + g\Big)\,,$$
where $L_D$ is the operator $L$ with Dirichlet boundary condition in $\Omega_\psi$.
Now, $\nabla L_D^{-1} \dv$ is bounded on $L^2(\Omega_\psi)$, and
$\nabla L_D^{-1}:\, L^{2_*}(\Omega_\psi)\to L^2(\Omega_\psi)$,
where $2_*:= (2n+2)/(n+3)$ is the $(n+1)$-dimensional Sobolev exponent. 
Therefore, since ${\bf f}$ and $g$ are supported in $\Omega\subset R_Q$, we have
\begin{multline}\label{eq4.12*}
\iint_{\Omega_\psi} |\nabla v_2|^2 \lesssim \iint_{\Omega} |{\bf f}|^2\,+\,
\left(\iint_{\Omega} |g|^{2_*}\right)^{2/2_*}\\[4pt]
\leq \,|R_Q|\, \|f\|^2_\infty  + |R_Q|^{-1+2/2_*} \iint |g|^2 \lesssim 
 \ell(Q)^{n-1}\,\Big(\sup_F |u_2|\Big)^2\,,
\end{multline}
where in the last step we have used \eqref{eq4.10*}-\eqref{eq4.11*}.
Consequently,
\begin{multline}\label{eq4.14}
\int_{\po_\psi} \left(S_\psi \left(v_2 \,1_{t\lesssim \ell(Q)}\right)\right)^2d\sigma
\approx \int_{\rn}\int_{\psi(x)}^{C\ell(Q)} |\nabla v_2(x,t)|^2\, \big(t-\psi(x)\big)\,dt dx\\[4pt]
\lesssim \ell(Q)\iint_{\Omega_\psi} |\nabla v_2|^2  \lesssim |Q|\,\Big(\sup_F |u_2|\Big)^2\,.
\end{multline}
Moreover,  since $v_2$ vanishes on $\po_\psi$, we have that 
\begin{multline}\label{eq4.15}
 \int_{\rn}\int_{\psi(x)}^{C\ell(Q)} | v_2(x,t)|^2\,dt dx =  
 \int_{\rn}\int_{\psi(x)}^{C\ell(Q)} \left| \int_{\psi(x)}^t \partial_sv_2(x,s) \,ds\right|^2\,dt dx \\[4pt]
 \lesssim \ell(Q)^2 \int_{\rn}\int_{\psi(x)}^{C\ell(Q)} \left| \nabla v_2(x,s) \right|^2\,ds dx
 \lesssim  \ell(Q)^{n+1}\,\Big(\sup_F |u_2|\Big)^2\,,
\end{multline}
by \eqref{eq4.12*}.  We let $x_Q$ denote the center of $Q$, and set
$r_Q= C_1 \ell(Q)$, with $C_1$ chosen large enough that $T_Q\subset
B_Q:=B(x_Q,r_Q)$.  Since $Lv_2 = 0 $ in $\reu\setminus B_Q$, and $v_2=0$
in $(\rn\times\{0\})\setminus B_Q$, by Moser's estimates we have that
\begin{equation}\label{eq4.16}
\m_Q:= \|v_2\|_{L^\infty\left(\Omega_\psi\cap(3B_Q\setminus 2B_Q)\right)}\lesssim |B_Q|^{-1/2}\,
\|v_2\|_{L^2\left(\Omega_\psi\cap 4B_Q\right)}\,\lesssim\, \sup_F |u_2|\,,
\end{equation}
where in the last step we have used \eqref{eq4.15}.
We observe that $-v_2=v_1$ in $\Omega_\psi\setminus B_Q$.
We may therefore apply Lemma \ref {l4.8} to $v_2$,  
with $r=r_Q$, $x_0=x_Q$, to obtain
\begin{multline}\label{eq4.17}
\int_{\po_\psi} \left(S_\psi \left(v_2 \,1_{t\gtrsim \ell(Q)}\right)\right)^2d\sigma
\approx \int_{\rn}\int_{C\ell(Q)}^{\infty} |\nabla v_2(x,t)|^2\, t\,dt dx\\[4pt]
\approx \sum_{k=k_0}^\infty2^k\ell(Q)\int_{2^k\ell(Q)}^{2^{k+1}\ell(Q)}\int_{\rn} |\nabla v_2(x,t)|^2\, dt dx
\\[4pt]\lesssim\, \sum_{k=k_0-1}^\infty\frac1{2^{k}\ell(Q)}
\int_{2^k\ell(Q)}^{2^{k+1}\ell(Q)} \int_{\rn}| v_2(x,t)|^2\, dxdt\\[4pt]
\lesssim \m_Q^2 \sum_{k=k_0-1}^\infty\frac1{2^{k}\ell(Q)}\left(\ell(Q)\right)^{2n-2+2\nu} 
\int_{2^k\ell(Q)}^{2^{k+1}\ell(Q)} \int_{\rn}| X-x_Q|^{-2(n-1+\nu)}\, dX\\[4pt]
\lesssim \, |Q|\, \Big(\sup_{F}|u_2|\Big)^2 
\sum_k2^{-k(n-2+2\nu)} \,\lesssim \,|Q|\, \Big(\sup_{F}|u_2|\Big)^2\,,
\end{multline}
since $n\geq 2$ and $\nu>0$, where in the third, fourth and fifth lines, respectively, 
we have used Caccioppoli's inequality, Lemma \ref {l4.8}, and \eqref{eq4.16}.   
Combining \eqref{eq4.14} and \eqref{eq4.17}, we produce the desired bound for
$S_\psi(v_2)$.   

We now turn to $\widetilde{N}_{*,\psi}(v_2)$.  By Lemma \ref {l4.8},
it is enough to establish \eqref{eq4.4} for $\widetilde{N}_{*,\psi,Q}(v_2)$, where the latter is defined by
restricting the supremum to values of $t\leq 3C_1\ell(Q)$.
To this end, we fix $(x,t) \in \Omega_\psi$, with $t\lesssim \ell(Q)$, and a ball $B_\gamma(x,t)$, centered at $(x,t)$, of radius $\gamma (t-\psi(x)$.  Our goal is to show that
\begin{equation}\label{eq4.18}
\frac{1}{B_\gamma(x,t)}\iint_{B_\gamma(x,t)}|v_2(y,\tau)| dy d\tau \lesssim 
M \left(\int_{\psi(\cdot)}^{C\ell(Q)}|\nabla v(\cdot,s)|\,ds \right)(x)\,,
\end{equation}
where $M$ denotes the Hardy-Littlewood operator acting in the ``horizontal"
(i.e., $x$) variable.  Momentarily taking \eqref{eq4.18} for granted, we find that
\begin{multline*}\int_{\po_\psi} \left(\widetilde{N}_{*,\psi,Q}(v_2)\right)^2\,d\sigma
\lesssim \int_{\rn} \left(M \left(\int_{\psi(\cdot)}^{C\ell(Q)}|\nabla v(\cdot,s)|\,ds \right)(x)\right)^2\,dx\\[4pt]
\lesssim \ell(Q) \int_{\rn}\int_{\psi(x)}^{C\ell(Q)}|\nabla v(\cdot,s)|^2\,ds dx
\lesssim \ell(Q)\iint_{\po_\psi}|\nabla v_2|^2\,  \lesssim \,|Q|\, \Big(\sup_{F}|u_2|\Big)^2\,,
\end{multline*}
as desired, by \eqref{eq4.12*}.  Turning to the proof of \eqref{eq4.18}, we observe that, since
$v_2$ vanishes on $\po_\psi$, the left hand side of \eqref{eq4.18} equals
$$\frac{1}{B_\gamma(x,t)}\iint_{B_\gamma(x,t)}\left|\int_{\psi(y)}^\tau\! \partial_sv_2(y,s)\, ds\right| 
dy d\tau\lesssim \fint_{|x-y|< C (t-\psi(x))}\int_{\psi(y)}^{C\ell(Q)}\!| \nabla v_2(y,s)|\, ds dy\,,$$
whence \eqref{eq4.18} follows immediately.  This concludes the proof of Theorem \ref{th2}
(for solutions that are continuous up to the boundary of $\reu$), 
modulo the proof of Lemma \ref{l4.8}.  

\begin{proof}[Proof of Lemma \ref{l4.8}]  Let us make several elementary reductions, as follows.
By dilation and translation invariance of the class of operators under consideration,
we may suppose that $B$ is the unit ball centered at 0, i.e., that $x_0=0$ and that $r=1$.
Furthermore, by renormalizing, we may suppose that $\m=1$, i.e., that
$|w|\leq 1$ on $\reu\cap (3B\setminus 2B)$.  Finally, we claim that without loss of generality,
we may suppose that $w\geq 0$.  Indeed, let $\Omega':= \reu\setminus 2B$
and set $f:= w|_{\po'}$.
Let $f=f^+-f^-$ be the splitting of $f$ into its positive and negative parts, and observe that
\begin{equation}\label{f+-}
\max(f^+,f^-) = |f| \leq \|w\|_{L^\infty\left(\reu\cap (3B\setminus 2B\right)}=\m =1\,,
\end{equation}
by our renormalization, since $f$ vanishes on $\po'\cap (\rn\times\{0\})$.
We then may construct  solutions $w_+,w_-$ in $\Omega'$, continuous up to the boundary
of $\Omega'$, with compactly supported
data $f^+, f^-$, respectively, which decay to 0 at infinity.  By the maximum principle,
$w=w_+-w_-$ in $\Omega'$, and furthermore, by \eqref{f+-}, we have that
$$\max(\|w_+\|_{L^\infty(\Omega')},\|w_-\|_{L^\infty(\Omega')})\leq \m=1\,.$$
Therefore, by treating separately $w_+,w_-$, we may suppose that $w$ is a
non-negative solution in $\Omega'$, with $\|w\|_{L^\infty(\Omega')}\leq1$.

Let $\Gamma(X,0)$ be
the fundamental solution for $L$ with pole at the origin,
so that $\Gamma(X,0)\approx |X|^{1-n}$ in $\ree\setminus\{0\}$.  
Set $w_0(X):=C_0\,\Gamma(X,0)$, where we choose the constant
$C_0$, depending only upon dimension and ellipticity, so that $w(X)\leq w_0(X)$ for
$X\in \reu\cap (3B\setminus 2B)$.    
By the decay of $w$ at infinity, it follows by the maximum principle that
$w(X) \leq w_0(X)$ for $X\in \reu\setminus 2B$.   We now make the following claim. 

\begin{claim} Suppose that
$w_1$ is continuous and bounded in $\overline{\reu\setminus 2B}$, with 
$w_1\geq 0$, $Lw_1=0$ in $\reu\setminus 2B$, $w_1\to 0$ at infinity,
and $w_1|_{\rn\setminus B}=0$.  Suppose further that 
$w_1(X) \leq w_0(X)$ for $X\in \reu\setminus 2^j B$, for some integer $j\geq 1$.
Then $w_1(X) \leq (1-\delta) \,w_0(X)$, in $\reu\setminus 2^{j+1}B$, for some $\delta>0$ 
depending only upon dimension and ellipticity.
\end{claim}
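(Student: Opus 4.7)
The plan is to exploit the strict positivity of $w_0$ on the flat boundary $\rn\setminus B$, where $w_1$ vanishes.  Set $\Omega_j:=\reu\setminus 2^jB$ and let $\omega^X=\omega^X_{L,\Omega_j}$ denote $L$-harmonic measure for $\Omega_j$ with pole at $X$.  Both $w_0$ and $w_1$ are bounded, continuous on $\overline{\Omega_j}$, $L$-harmonic in $\Omega_j$, and decay to $0$ at infinity, so the harmonic-measure representation gives
\begin{align*}
w_0(X) &= \int_{\partial(2^jB)\cap\reu} w_0\,d\omega^X \,+\,\int_{\rn\setminus 2^jB} w_0(y)\,d\omega^X(y)\,,\\
w_1(X) &= \int_{\partial(2^jB)\cap\reu} w_1\,d\omega^X\,,
\end{align*}
where the second equality uses the hypothesis $w_1\equiv 0$ on $\rn\setminus B\supset \rn\setminus 2^jB$.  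Subtracting and invoking the standing hypothesis $w_1\leq w_0$ on $\Omega_j$ yields
\[
w_0(X)-w_1(X)\,\geq\,\int_{\rn\setminus 2^jB} w_0(y)\,d\omega^X(y)\,,\qquad X\in\Omega_j\,.
\]
Consequently, the claim reduces to the scale-invariant lower bound
\begin{equation*}
\int_{\rn\setminus 2^jB} w_0(y)\,d\omega^X(y)\,\geq\,\delta\,w_0(X)\,,\qquad X\in\reu\setminus 2^{j+1}B\,,
\end{equation*}
for some $\delta=\delta(n,\lambda)>0$ independent of $j$.

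Next I would reduce to the case $j=0$ by the dilation $X\mapsto 2^{-j}X$, which preserves the class of real, $t$-independent, uniformly elliptic operators with the same ellipticity constants, sends $2^jB$ to $B$, and rescales the fundamental solution by a factor of $2^{j(1-n)}$ on both sides; hence the ratio $\int w_0\,d\omega^X/w_0(X)$ is invariant under the scaling.  For the fixed-scale problem, I would decompose $\rn\setminus B$ into dyadic annuli $R_k:=\{y\in\rn:\,2^k\leq|y|<2^{k+1}\}$, $k\geq 0$, on each of which $w_0(y)\approx 2^{k(1-n)}$ by standard pointwise bounds on Green's functions for $t$-independent real elliptic operators.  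Since $w_0(X)\approx|X|^{1-n}\approx 2^{k_0(1-n)}$ with $k_0\approx\log_2|X|$, matters reduce to exhibiting a single index $k$ with $2^k\approx|X|$ for which $\omega^X(R_k)\gtrsim 1$ uniformly in $X\in\reu\setminus 4B$.

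This final harmonic-measure lower bound is a Bourgain-type accessibility estimate: from a corkscrew point $X^*$ for the flat boundary at scale $|X|$ (so at distance $\sim|X|$ from the origin and at height $\sim|X|$), the $L$-harmonic measure in $\reu$ of a surface ball of comparable size on $\rn$ is bounded below by a constant depending only on $n$ and $\lambda$.  The classical Bourgain argument (see the discussion in \cite{Ke}) uses only the maximum principle together with barriers built from the fundamental solution, and therefore carries over to the non-symmetric $t$-independent setting considered here.  A Harnack chain inside $\Omega_0=\reu\setminus 2B$ then transfers the bound from $X^*$ to the original point $X\in\reu\setminus 4B$, with attenuation depending only on $n$ and $\lambda$, using that $|X|\geq 4$ while the removed ball has radius only $2$.

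The main technical obstacle will be ensuring uniformity of the constant $\delta$ when $X$ lies close to the flat boundary, so that the natural Harnack chain from $X$ to a corkscrew point at height $\sim|X|$ must follow a thin tube adjacent to $\rn\setminus B$.  In that regime, however, $X$ is itself a near-corkscrew point for a surface ball in $\rn\setminus B$ of size comparable to $|X|-2$, and Bourgain's lower bound then applies directly to produce $\omega^X(R_{k_0})\gtrsim 1$ with $2^{k_0}\approx|X|$.  Combining this with the reduction above, we obtain $w_0(X)-w_1(X)\geq\delta\,w_0(X)$, i.e., $w_1(X)\leq(1-\delta)w_0(X)$, on $\reu\setminus 2^{j+1}B$, as claimed.
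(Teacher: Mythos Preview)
Your approach is correct but takes a genuinely different route from the paper.  The paper's argument is considerably more elementary: it never invokes harmonic measure at all.  Instead, it observes that since $w_1$ vanishes on the flat boundary and $w_1\le w_0\approx 2^{j(1-n)}$ in the annulus $2^{j+2}B\setminus 2^jB$, De Giorgi--Nash boundary H\"older continuity yields $w_1(Y)\le\frac12 w_0(Y)$ in every half-ball $B(X,\eta_0 2^j)\cap\reu$ centered at a flat-boundary point $X=(x,0)$ with $|x|=2^{j+1}$.  Setting $h:=w_0-w_1\ge0$, one then has $h\ge\frac12 w_0$ in all such half-balls, and Harnack's inequality (with a chain of uniformly bounded length) propagates $h\ge\delta w_0$ to the full hemisphere $\{|Y|=2^{j+1}\}\cap\reu$; the maximum principle finishes.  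This uses only boundary H\"older, Harnack, and the maximum principle---all tools already in play in the paper---and avoids both the representation formula for unbounded domains and any Bourgain-type lemma.

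Your harmonic-measure approach is sound in outline, and the reduction $w_0-w_1\ge\int_{\rn\setminus 2^jB}w_0\,d\omega^X$ together with the scaling to $j=0$ is a clean way to organize the argument.  One small correction to your final paragraph: when $X=(x,t)$ lies close to the flat boundary (so $t\ll|X|$, $|x|\approx|X|$), the relevant surface ball for Bourgain's lemma has radius comparable to $t$, not to $|X|-2$.  With that radius, $X$ is a genuine corkscrew point for a surface ball $\Delta$ centered at $(x,0)$, $\Delta$ lies in the annulus $R_{k_0}$ with $2^{k_0}\approx|X|$ (since $|x|\approx|X|$ and $t\ll|X|$), and Bourgain applied directly in $\Omega_0=\reu\setminus B$ (which satisfies the capacity density condition at every flat-boundary point) gives $\omega^X_{\Omega_0}(\Delta)\gtrsim1$ with no Harnack chain needed.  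For points $X$ at height $t\approx|X|$, a bounded-length Harnack chain inside $\Omega_0$ (the removed ball $B$ being tiny at scale $|X|\ge2$) transfers the Bourgain bound from a corkscrew point as you describe.  With that fix your argument goes through, though the paper's route is shorter.
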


Since $w_0(X)\approx |X|^{1-n}$, the conclusion of Lemma \ref{l4.8} 
follows from the claim by a straightforward iteration argument,
whose details we omit.  Therefore, it remains only to establish the claim.
To this end, we fix $j$ such that
$w_1(X) \leq w_0(X)$ for $X\in \reu\setminus 2^j B$.
We note that by H\"older continuity at the boundary, 
and the fact that $w_0(Y)\approx 2^{j(1-n)}$ in $2^{j+2}B\setminus 2^jB$,
there is a constant $\eta_0$ depending only on ellipticity and dimension,
such that for $X=(x,0)$, with $|X|=2^{j+1}$, we have 
$$w_1(Y)\leq \frac12 w_0(Y)\,,\qquad \forall Y\in B(X,\eta_0 2^j)\cap\reu\,.$$
Set $h:= w_0-w_1$.  Then $h\geq 0$, $Lh=0$ in $\reu\setminus B$, and 
$$h(Y)\geq \frac12 w_0(Y)\,,\qquad \forall Y\in B(X,\eta_0 2^j)\cap\reu\,,$$
and for all $X=(x,0)$ with $|X|=2^{j+1}$.  Therefore, by Harnack's inequality,  
there is some constant $\delta>0$ depending only upon ellipticity and dimension such that
$$h(Y) \geq \delta \,w_0(Y)\,\qquad \forall Y\in \reu\,\,{\rm with}\,\, |Y|=2^{j+1}\,,$$
i.e., $w_1(Y) \leq (1-\delta) w_0(Y)$ for all $Y\in \reu$ with $|Y|=2^{j+1}$.  The claim now follows by the
maximum principle.
 \end{proof}

\section{$\epp$-approximability and the proof of Theorem \ref{th3}}\label{s-epp-app}

In order to prove Theorem \ref{th3}, it is enough, by \cite[Theorem 2.3]{KKPT}, to show that
if $u$ is bounded in $\reu$, with $\|u\|_\infty\leq 1$, and  $Lu=0$ in $\reu$, 
then $u$ enjoys the following ``$\epp$-approximability" property, for every $\epp>0$:
\begin{definition}\label{def5.1} Let $u\in L^\infty(\reu)$,
with $\|u\|_\infty \leq 1$.  Given $\epp>0$, we say that $u$ 
is $\epp$-{\bf approximable}
if for every cube $Q_0\subset \rn$, there is a $\vp =\vp_{Q_0}\in W^{1,1}(T_{Q_0})$ 
such that 
\begin{equation}\label{eq5.2a}\|u-\vp\|_{L^\infty(T_{Q_0})}<\epp\,,
\end{equation}
and 
\begin{equation}\label{eq5.2}
\sup_{Q\subset Q_0}\frac1{|Q|}\iint_{T_Q}|\nabla \vp(x,t)| \,dx dt \leq C_\epp\,,
\end{equation}
where $C_\epp$ depends also upon dimension and ellipticity, but not on $Q_0$.
\end{definition}
Actually, the definition of $\epp$-approximability given in \cite{KKPT}, is stated in
terms of the existence of a smooth, globally defined
$\vp$, but the version above is in fact all that is needed in the proof of Theorem 2.3
of that paper.  Moreover, the arguments of \cite{KKPT} do not require $\epp$-approximability
for all bounded solutions, but only for solutions whose boundary data is the characteristic function of
a bounded Borel set.  We shall return to this point below.

In this section, we shall assume that $u$ satisfies the following pair of estimates.
Given a cube $Q\subset \rn$, with center $x_Q$,
we let $P_Q:=(x_Q, (1-\eta) \ell(Q))$ denote the ``Corkscrew point" relative to 
$Q$, where $\eta>0$ is a small number to be chosen.  Note that,
if $\psi: \rn\to \re$ is Lipschitz, $\lVert \nabla \psi\rVert_\infty \leq M$, and 
$0\leq \psi\leq \frac{1}{8}\ell(Q)$ in $Q$, then 
$\dist\left(P_{Q}, \partial \Omega_\psi \cap T_Q\right)>\frac{\eta}{2}\,\ell(Q)$, 
provided that $\eta$ is sufficiently small.  Here, as usual,
$\Omega_\psi:=\left\{ (x,t):t>\psi(x) \right\}$.
\begin{est}\label{est1}
Let $Lu=0$ in $\reu, \lVert u\rVert_\infty<\infty$.  
We say that Estimate 1 holds if for every cube $Q\subset \rn$, and every $\psi$ as above, we have:
\begin{equation}
\int_{(1-s_n)Q}\lvert u\big(x,\psi(x)\big)-u(P_{Q})\rvert^2 dx\leq 
C_{M,\eta}
\iint_{\Omega_\psi\cap T_Q} \lvert \nabla u\rvert ^2 \,t\,dtdx\,,
\label{assumption}
\end{equation}
for some $s_n<1$ sufficiently small, where $C_{M,\eta}$ depends also on dimension and ellipticity.
\end{est}
\begin{est}\label{est2}
	Let $L$, $u$ be as in Estimate \ref{est1}, $\lVert u\rVert_\infty\leq 1$.  We say that
	Estimate 2 holds if
	\begin{equation}
		\sup_Q\frac1{|Q|}\iint_{T_Q} \lvert \nabla u(x,t)\rvert^2\,t\,dtdx\leq C\,.
		\label{est.2}
\end{equation}
\end{est}
\begin{remark}  For bounded null solutions of $t$-independent operators,
Estimate \ref{est2} has already been proved in general: 
indeed, it is simply a re-statement of Corollary \ref{c1}.
Moreover, at this point, we have verified 
Estimate \ref{est1} for solutions $u$ that are continuous up to the boundary.
Indeed, Estimate \ref{est1} follows easily from \eqref{eq1.12}, for every $s_n \in (0,1)$,
by interior estimates for solutions,  since $\psi\geq 0$ and thus $t-\psi(x)\leq t$.
In turn, by the pull-back mechanism described in the proof of Corollary
\ref{c2}, \eqref{eq1.12} for continuous $u$ follows directly from
\eqref{eq1.NSloc} for continuous $u$, 
and we have established the latter in Section \ref{sNS}.
As discussed at the beginning of Section \ref{sNS}, this will be enough
to establish Theorem \ref{th3}, as we shall see
momentarily.
\end{remark}

The main result in this section is:
\begin{theorem}\label{thm.a}
Assume that  $L u=0$ in $\reu, \lVert u\rVert_\infty \leq 1$, and that 
Estimate \ref{est1} and Estimate \ref{est2} hold for $u$.  Then, for each $\epp>0$, $u$ is $\epp$-approximable.  The constant $C_\epp$ in 
the Carleson measure condition \eqref{eq5.2} depends also on dimension, ellipticity
and the constants in Estimate \ref{est1}, Estimate \ref{est2}, but not on $Q_0$.
\end{theorem}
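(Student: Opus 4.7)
The plan is to follow the stopping-time/corona paradigm pioneered by Garnett and Dahlberg, and adapted to general elliptic operators in \cite{KKPT}, but invoking Estimate \ref{est1} in place of the usual $S/N$ bound on arbitrarily oriented Lipschitz subdomains. Fix $Q_0$ and $\epp>0$. I would construct a family $\mathcal{S}$ of stopping cubes inside $Q_0$ inductively: $Q_0\in\mathcal{S}$, and given $Q\in\mathcal{S}$, its successors $\mathcal{S}(Q)$ are the maximal dyadic subcubes $Q'\subsetneq Q$ such that the oscillation of $u$ on the Whitney-style region adjacent to $Q'$ exceeds $c\epp$ relative to the reference value $u(P_Q)$. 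The corona $\mathcal{C}(Q)$ consists of dyadic cubes in $Q$ not strictly inside any element of $\mathcal{S}(Q)$. To each $Q\in\mathcal{S}$ we associate a Lipschitz cutoff function $\psi_Q:\rn\to[0,\ell(Q)/8]$, vanishing outside $Q$, whose graph traces the tops of the successors $\mathcal{S}(Q)$ in a smoothed-out way; the sawtooth $\Omega_Q:=T_Q\cap\Omega_{\psi_Q}$ is then a vertical Lipschitz graph domain of precisely the form required by Estimate \ref{est1}, and by the stopping rule and interior De Giorgi-Nash estimates, $|u-u(P_Q)|\lesssim \epp$ everywhere on $\Omega_Q$.

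Next I would prove the packing inequality $\sum_{Q'\in\mathcal{S},\,Q'\subset Q}|Q'|\leq C_\epp |Q|$ for every $Q\in\mathcal{S}$. For each successor $Q'\in\mathcal{S}(Q)$, selection forces $|u(X_{Q'})-u(P_Q)|>c\epp$ for some $X_{Q'}$ close to the top of $T_{Q'}$; interior Holder regularity upgrades this to $|u(x,\psi_Q(x))-u(P_Q)|\gtrsim\epp$ on a subset $F_{Q'}\subset Q'$ with $|F_{Q'}|\gtrsim|Q'|$. Summing and applying Estimate \ref{est1} on $\Omega_{\psi_Q}\cap T_Q$ gives
\begin{equation*}
\sum_{Q'\in\mathcal{S}(Q)}|Q'|\,\lesssim\,\epp^{-2}\!\int_{(1-s_n)Q}|u(x,\psi_Q(x))-u(P_Q)|^2 dx\,\lesssim\,\epp^{-2}\!\iint_{\Omega_Q}|\nabla u|^2 t\,dtdx.
\end{equation*}
Iterating down the tree and using Estimate \ref{est2} to bound the cumulative sum by a geometric Carleson series yields the desired packing constant $C_\epp\lesssim\epp^{-2}$.

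With the corona structure in hand, I would construct $\varphi$ as follows. On the body $\Omega_Q^{*}$ of each sawtooth (a mild interior shrinkage of $\Omega_Q$) set $\varphi\equiv u(P_Q)$; in the thin transition layer of width $\sim\ell(Q')$ located near the top of each successor $Q'\in\mathcal{S}(Q)$, interpolate linearly (say via a smooth Whitney partition of unity adapted to the successors) between $u(P_Q)$ and $u(P_{Q'})$. Since successive reference values differ by at most $O(\epp)$ by construction and the bodies partition $T_{Q_0}$ up to the transition layers, $\|u-\varphi\|_{L^\infty(T_{Q_0})}\lesssim\epp$, giving \eqref{eq5.2a} after rescaling $\epp$. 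On each transition layer $|\nabla\varphi|\lesssim\epp/\ell(Q')$ in a volume $\lesssim\ell(Q')^{n+1}$, contributing $\lesssim\epp|Q'|$ to the $L^1$ norm of $\nabla\varphi$. Summing over successors and using the packing bound,
\begin{equation*}
\frac{1}{|Q|}\iint_{T_Q}|\nabla\varphi|\,dxdt\,\lesssim\,\frac{\epp}{|Q|}\sum_{Q'\in\mathcal{S},\,Q'\subset Q}|Q'|\,\lesssim\,\epp\cdot C_\epp,
\end{equation*}
which is the Carleson measure condition \eqref{eq5.2}.

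The main obstacle is Step 3, the construction of $\varphi$: one needs a piecewise-reference function which is simultaneously uniformly $\epp$-close to $u$ and has boundary transitions sharp enough to produce a Carleson-bounded gradient but not so sharp as to violate $W^{1,1}$ regularity. The key refinement of this paper over \cite{D}, \cite{KKPT} is that the sawtooths $\Omega_Q$ used throughout are vertical graph domains (with $t$-direction transverse to the boundary), which is precisely the setting in which Estimate \ref{est1} is available under mere $t$-independence; this is what allows the scheme to run without invoking $S/N$ bounds on tilted Lipschitz subdomains. A secondary technical point, to be handled routinely, is that the initial corona at $Q_0$ must also absorb Whitney cubes near the ceiling of $T_{Q_0}$ where $u$ may oscillate beyond $\epp$ before the first stopping generation.
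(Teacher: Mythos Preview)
Your outline captures the corona/stopping-time strategy correctly, and your insistence on using only \emph{vertical} Lipschitz graph domains so that Estimate~\ref{est1} applies is exactly the point of the paper's refinement over \cite{D} and \cite{KKPT}. However, there is a genuine gap in your packing step.

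You assert that one can build a Lipschitz function $\psi_Q$ whose graph ``traces the tops of the successors $\mathcal{S}(Q)$ in a smoothed-out way'', and then that interior regularity gives $|u(x,\psi_Q(x))-u(P_Q)|\gtrsim\epp$ on a subset $F_{Q'}\subset Q'$ with $|F_{Q'}|\gtrsim|Q'|$ for \emph{every} successor $Q'$. These two requirements are incompatible. If $Q_1,Q_2\in\mathcal{S}(Q)$ are adjacent with $\ell(Q_2)\ll\ell(Q_1)$, then any graph with Lipschitz constant $M$ sitting at height $\approx\ell(Q_1)$ over $Q_1$ is still at height $\approx\ell(Q_1)\gg\ell(Q_2)$ over all of $Q_2$. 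At that height the point $(x,\psi_Q(x))$ lies well inside the parent sawtooth, far above the Whitney region of $Q_2$, and there is no reason for $|u(x,\psi_Q(x))-u(P_Q)|$ to be large; the stopping information about $Q_2$ lives at height $\approx\ell(Q_2)$ and does not propagate upward. Consequently your displayed inequality $\sum_{Q'\in\mathcal{S}(Q)}|Q'|\lesssim\epp^{-2}\iint_{\Omega_Q}|\nabla u|^2\,t$ is unjustified.

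The paper resolves this with a nontrivial combinatorial layer. The graph $\Psi_1$ is taken to be the upper boundary of the union of downward cones $\Gamma_\delta$ emanating from the tops of the boxes $T(Q')$, $Q'\in G_1(Q)$; this is automatically Lipschitz with constant $1/\delta$. Only the \emph{uncovered} successors---those whose top face is not shadowed by the cone of a larger successor---have their tops lying on $\Psi_1$, and for these (and only these) the oscillation lower bound on the graph is available via the Claim and Estimate~\ref{est1}. The substance of Lemma~\ref{lma.1} (Facts~1--5) is a tree/chain argument showing that every successor lies in the ``tree'' of some uncovered successor, and that each tree has total measure $\leq c_n$ times its top cube; hence $\sum_{G_1}|Q_j|\leq c_n\sum_{\tilde G_1}|Q_j|$. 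This is then combined with a dichotomy: either the successors miss a $\mu$-fraction of $Q$ (giving the $(1-\mu)|Q|$ term), or they nearly cover $Q$, in which case enough uncovered successors lie inside $(1-s_n)Q$ for Estimate~\ref{est1} to bite. The $(1-\mu)$ factor is what makes the telescoping sum in the packing Lemma~\ref{lma.2} converge. You will need to supply this, or an equivalent, mechanism; it is not routine.

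A smaller remark: the paper builds $\varphi$ as a BV step function with distributional jumps across $\partial\Omega(Q_j)$, followed by a correction $\varphi\mapsto u$ on ``red'' Whitney rectangles where $u$ itself oscillates by more than $\epp/10$. Your smooth transition-layer construction and your oscillation-based (rather than corkscrew-value-based) stopping rule are reasonable variants, and may indeed let you avoid the red-rectangle correction; but this is secondary to the packing issue above.
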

Before proving the theorem, let us use it to complete the proof of Theorem \ref{th3}.
\begin{proof}[Proof of Theorem \ref{th3}]
As noted above, in order to obtain the conclusion of
Theorem \ref{th3} via the program of \cite{KKPT}, 
it is enough to establish $\epp$-approximability for solutions with boundary data
of the form $u(x,0)=1_{\B}$, where $\B$ is a bounded Borel set.  Thus, given Theorem
\ref{thm.a}, it is enough to establish Estimate \ref{est1} for such solutions 
(since we already know that Estimate \ref{est2} holds for bounded solutions in general).
Moreover, it is enough to do this for a $t$-independent operator $L$
with smooth coefficients, as long as the bound in  \eqref{assumption} depends only upon 
the stated parameters.
Indeed, to prove Theorem \ref{th3}, we may then proceed initially under the
qualitative assumption that the coefficients are smooth, to
obtain the $A_\infty$ property of $L$-harmonic measure, but with
$A_\infty$ constants depending only on dimension and ellipticity.  We may then deduce the
$A_\infty$ conclusion in the general case
(i.e., without  {\it a priori} smoothness of the coefficients), 
by an approximation argument as in \cite[pp. 256-257]{KKPT}.

Therefore, we suppose that the coefficients of $L$ are smooth, 
and we fix a bounded Borel set $\B$.  
For $Y=(y,s)\in\reu$, set $u(Y):=\hm^Y(\B)$, the 
solution of the Dirichlet problem with data
$1_{\B}$.
Let us first suppose that $\B$ is open.
Let $X=(x,t)$ be a fixed point in  $\reu$.
By the inner  regularity of $L$-harmonic measure, and  Urysohn's lemma, 
we may find a sequence $\{f_k\}$ of continuous functions,  
and closed sets $F_1\subset F_2\subset...
\subset F_k\subset ...\subset \B$, 
such that $f_k\equiv 1$ on 
$F_k$, $f_k\equiv 0$ on $\B^c$, and such that 
$u_k(Y) \leq u(Y)$, for all $Y\in\reu$, 
with $u_k(X)\to u(X)$, as $k\to\infty$,
where $u_k$ denotes the solution with data $f_k$.
Thus, by Harnack's inequality, 
\begin{equation}\label{eq5.8}
u_k \to u\,,\quad {\rm uniformly\,\, on\,\, compacta \,\,in}\,\, \reu\,.
\end{equation} 

Our goal at the moment is to show that \eqref{assumption} holds for $u$.
 To this end, fix a small number $\delta>0$, and given a Lipschitz function $\psi$,
 we set $\psi_\delta(x):= \max(\psi(x),\delta)$.  We note that $\|\nabla\psi_\delta\|_\infty
 \leq \|\nabla\psi\|_\infty = M$, uniformly in $\delta$.
 Since \eqref{assumption} holds for solutions that are continuous up to the boundary,
we have for each $\delta>0$, and for every cube $Q$, that
 \begin{multline*}
 \int_{(1-s_n)Q}\lvert u\big(x,\psi_\delta(x)\big)-u(P_{Q})\rvert^2 dx =\lim_{k\to\infty}
\int_{(1-s_n)Q}\lvert u_k\big(x,\psi_\delta(x)\big)-u_k(P_{Q})\rvert^2 dx\\[4pt]\lesssim \limsup_{k\to\infty}
\iint_{\Omega_{\psi_\delta}\cap T_Q} \lvert \nabla u_k\rvert ^2 \,t\,dtdx\\[4pt]
\lesssim \iint_{\Omega_{\psi_\delta}\cap T_Q} \lvert \nabla u\rvert ^2 \,t\,dtdx
\,+\,\limsup_{k\to\infty}\iint_{\Omega_{\psi_\delta}\cap T_Q} \lvert \nabla (u_k-u)\rvert ^2 \,t\,dtdx\\[4pt]
\lesssim \iint_{\Omega_{\psi}\cap T_Q} \lvert \nabla u\rvert ^2\,,
 \end{multline*}
since $\Omega_{\psi_\delta}\subset\Omega_\psi$, 
where the implicit constants depend only upon the stated parameters, and
where the first limit holds by \eqref{eq5.8}, and the second
by Cacciopoli's inequality and \eqref{eq5.8}.  Recall that at this point we have assumed qualitatively
that our coefficients are smooth, so that $L$-harmonic measure and Lebesgue measure $dx$
on the boundary are mutually absolutely continuous.  Thus, by the results of \cite{CFMS}
(which, as we have observed above, remain valid in the setting of non-symmetric coefficients),
the bounded solution $u$ converges non-tangentially to its boundary data a.e. ($dx$).   We may
therefore take a limit as $\delta\to 0$ to obtain \eqref{assumption} for solutions
with boundary data given by the characteristic function of a bounded open set.  To establish
\eqref{assumption} when $u(x,0)=1_{\B}$ is the characteristic function of
a general bounded Borel set $\B$, we simply use outer regularity
of harmonic measure, and repeat the previous argument, but now with
$u_k(Y):= \hm^Y(\oo_k)$, where $\{\oo_k\}$ is a nested sequence of open sets
containing $\B$.  We omit the details. \end{proof}

We have now reduced matters to proving Theorem \ref{thm.a}.

\begin{proof}[Proof of Theorem \ref{thm.a}]
We fix a cube $Q_0\subset\rn$, and by dilation and translation invariance, we may suppose that
$Q_0=\left\{ 0\leq x_j\leq 1 \right\}$ is the unit cube in $\rn$.  Then
$T(Q_0):=T_{Q_0}=\left\{ 0\leq x_j\leq 1, 0\leq t\leq 1 \right\}$ is the associated Carleson box. 
For $N$ large (to be chosen to depend only on $n$) we let 
$S(Q_0):=\left\{ 0\leq x_j\leq 1, 2^{-N}\leq t\leq 1 \right\}$ be a ``rectangle''. 
As above, we let $P_{Q_0}=\left( \frac{1}{2}, \frac{1}{2}, \dots, \frac{1}{2}, 1-\eta \right)$ be the ``Corkscrew point" relative to $Q_0$, where $ 0<\eta<1/4$ is to be chosen later, 
and set $\widetilde{P}_{Q_0}=\left( \frac{1}{2}, \dots, \frac{1}{2}, 1 \right)$. 
Thus, $\lvert P_{Q_0}-\widetilde{P}_{Q_0}\rvert = \eta l(Q_0)=\eta$. 

For each $k=1,2,\dots$ we partition $Q_0$ into $2^{kNn}$ dyadic sub-cubes 
$Q_j^k$, with $l(Q_j^k)=2^{-kN}l\left( Q_0 \right)=2^{-kN}$. By abuse of language we call the 
collection $\left\{ Q_j^k \right\}_{j,k}$ ``the dyadic'' sub-cubes of $Q_0$
(of course, they are dyadic,  but they are not all of the dyadics). 
For $Q$ a ``dyadic'' sub-cube of $Q_0$, we define $T(Q), S(Q), P_Q, \widetilde{P}_Q$ analogously. Note that for each ``dyadic'' Q, the ``rectangles'' $S(Q')$ such that $Q'\subset Q$ and $Q'$ is ``dyadic'', form a ``Whitney'' tiling of $T(Q)$. For $\kappa>1$ near $1$ (depending on $N,n$) we let $\widetilde{S}(Q)$ be the rectangle obtained by expanding $S(Q)$ around its center by a factor of $\kappa$. 
If $\kappa$ is close enough to $1$ (depending on $N,n$), $Q=Q_j^k$, we still have $\text{dist} \left(\widetilde{S}(Q), \rn\times\{0\}\right)\approx 2^{-Nk}$. Moreover, we have

\begin{enumerate}
	\item [1)] $\left\{ \widetilde S(Q) \right\}$ have bounded overlap.
	\item [2)] If we fix $Q_1$, a ``dyadic'' cube, and consider $\left\{ S(Q) \right\}$, $Q\subset Q_1$, $Q$ ``dyadic'', then this is a ``Whitney'' tiling of $T(Q_1)$;  moreover,
$\left\{ \widetilde S(Q) \right\}$ are all contained in $T(\widetilde{Q_1})$ where $\widetilde{Q_1}$ is the $\kappa$ expansion of $Q_1$. 
We fix such a $\kappa$ from now on.
\end{enumerate}

We now fix an operator of the form $L=-\text{div} A(x)\nabla$, where $(x,t)\in \reu, x\in \rn$, 
and $A(x)$ 
is an $(n+1)\times (n+1)$ real, elliptic, $t$-independent matrix, not necessarily symmetric, with 
ellipticity constant $\lambda>0$. For solutions of $Lu=0$ in $\reu$,
we have the following classical estimates:

\subsection{Preliminary Estimates} For the reader's convenience, we state here some classical estimates that we shall use repeatedly, in the form that we shall use them, i.e., stated  
for ``dyadic" $Q$.

\medskip

\noindent (Cacciopoli:)
\begin{equation}
	\iint_{S(Q)}\lvert \nabla u\rvert^2 \leq \frac{C_{\lambda,n,N,\kappa}}{l(Q)^2}
	\iint_{\widetilde S(Q)}|u|^2
	\label{eqn.3}
\end{equation}

\noindent (Regularity:)
\begin{equation}
	\text{For } x,y\in S(Q), \,\,\lvert u(x)-u(y)\rvert \leq C_{\lambda,n,N,\kappa}
	\left( \frac{\lvert x-y\rvert}{l(Q)} \right)^\alpha\cdot l(Q)\left(\left|\widetilde S(Q)\right|^{-1}
	\iint_{\widetilde S(Q)}\lvert \nabla u\rvert^2 \right)^{\frac{1}{2}},
	\label{eqn.4}
\end{equation}
$\alpha=\alpha(\lambda,n)>0.$

\noindent(Regularity bis:)
\begin{equation}
	\text{For } x,y\in S(Q), \,\,\lvert u(x)-u(y)\rvert \leq C_{\lambda,n,N,\kappa}
	\left( \frac{\lvert x-y\rvert}{l(Q)} \right)^\alpha\left(\left|\widetilde S(Q)\right|^{-1}
	\iint_{\widetilde S(Q)}\lvert  u\rvert^2 \right)^{\frac{1}{2}},
	\label{eqn.4'}
\end{equation}
$\alpha=\alpha(\lambda,n)>0.$

We now return to the proof of Theorem \ref{thm.a}.	
Fix $Q$ ``dyadic'', $Q\subset Q_0$, $\widetilde P_Q$ as before. Let $\widetilde Q$ be the interval (in $\rn\times \{l(Q)\}$) centered at $\widetilde P_Q$, with $\text{diam}(\tilde Q)=2\eta l(Q)$, 
	so that $\widetilde Q\subset \text{top} \,S(Q)$. 
	Note that $H^{n}(\widetilde Q)=c_n\eta^{(n)}\lvert Q\rvert$.

\begin{claim} \label{c5.12}
Assume $\epp>0$ is given, Assume that for some constant $A$, $\lvert A\rvert \leq 1$, we have $\lvert u(P_Q)-A\rvert \geq \frac{\epp}{10}$. Then $\forall X\in \widetilde Q$, we have 
	$\lvert u(X)-A\rvert \geq \frac{\epp}{20}$, 
	provided $\eta=\eta(\epp,\lambda,n)$ is small enough. \end{claim}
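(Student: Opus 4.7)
The plan is to use interior Hölder regularity (De Giorgi/Nash/Moser) for the solution $u$, applied in the "rectangle" $S(Q)$, to show that $u(X)$ is very close to $u(P_Q)$ for all $X \in \widetilde Q$, once $\eta$ is small. Then the conclusion follows from the triangle inequality.

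More precisely, first I would observe that both $P_Q$ and any $X \in \widetilde Q$ lie well inside $\widetilde S(Q)$ (at heights comparable to $\ell(Q)$, at distance $\approx \ell(Q)$ from $\rn \times \{0\}$), and that
$$|X - P_Q| \;\leq\; |X - \widetilde P_Q| \,+\, |\widetilde P_Q - P_Q| \;\leq\; \eta\,\ell(Q) \,+\, \eta\,\ell(Q) \;=\; 2\eta\,\ell(Q).$$
Then I would apply the regularity estimate \eqref{eqn.4'} to $u$ on $S(Q)$, together with $\|u\|_\infty \leq 1$, to get
$$|u(X) - u(P_Q)| \;\leq\; C_{\lambda,n,N,\kappa} \,(2\eta)^{\alpha} \left( |\widetilde S(Q)|^{-1} \iint_{\widetilde S(Q)} |u|^2 \right)^{1/2} \;\leq\; C_{\lambda,n,N,\kappa} \,(2\eta)^{\alpha},$$
where $\alpha = \alpha(\lambda,n) > 0$ is the De Giorgi/Nash exponent. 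Choosing $\eta$ small enough, depending only on $\epp$, $\lambda$, $n$ (and the already-fixed parameters $N,\kappa$), I can ensure that the right-hand side is at most $\epp/20$.

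Finally, the triangle inequality yields
$$|u(X) - A| \;\geq\; |u(P_Q) - A| \,-\, |u(X) - u(P_Q)| \;\geq\; \frac{\epp}{10} - \frac{\epp}{20} \;=\; \frac{\epp}{20},$$
for every $X \in \widetilde Q$, as claimed. No serious obstacle is anticipated here: the statement is essentially a quantitative continuity assertion, and the only delicate point is bookkeeping to confirm that the Hölder estimate \eqref{eqn.4'} is applicable with constants independent of $Q$ (which follows from the $t$-independence of $L$ and the scale invariance of the Whitney structure already fixed in the construction).
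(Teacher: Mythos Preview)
Your proposal is correct and follows essentially the same approach as the paper: apply the interior H\"older regularity estimate \eqref{eqn.4'} together with $\|u\|_\infty\le 1$ to get $|u(X)-u(P_Q)|\le C\eta^\alpha\le \epp/20$ for $\eta$ small, then use the triangle inequality. The paper's proof is the same one-line argument, only with slightly less detail in the distance bookkeeping; one minor point is that \eqref{eqn.4'} is stated for $x,y\in S(Q)$ (not $\widetilde S(Q)$), but both $P_Q$ and $X\in\widetilde Q\subset\text{top}\,S(Q)$ do lie in $S(Q)$, so this is harmless.
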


	Indeed, by (\ref{eqn.4'}), $$\lvert u(X)- u(P_Q)\rvert \leq C_{\lambda,n,N,\kappa}
	\left( \frac{\lvert X-P_Q\rvert}{l(Q)} \right)^\alpha\cdot\left(\aaviint_{\!\!\widetilde S(Q)}u^2 \right)^{\frac{1}{2}}\leq C_{\lambda,n,N,\kappa}\cdot \eta^\alpha\leq \frac{\epp}{20}$$ if $X\in \widetilde Q$, 
	and if $\eta$ is small.

	Now, given $\epp>0$ as in Thm \ref{thm.a}, we choose and fix $\eta$ as in Claim
	\ref{c5.12}.

	\subsection{Stopping Time Construction, part I} 
	
	We will now define ``generation''  cubes. We set $G_0=\left\{ Q_0 \right\}$. Fix $\epp>0$, 
	and define the first generation, $G_1=G_1(Q_0)$ to be the maximal ``dyadic'' $Q\subset Q_0$, for which $\lvert u(P_Q)-u(P_{Q_0})\rvert\geq \frac{\epp}{10}$. 
	The ``dyadic'' cubes 
in $G_1(Q_0)$ have pairwise disjoint interiors. For $Q\in G_1(Q_0)$ we define $G_1(Q)$ in the same way. We set $G_2=G_2(Q_0)=\cup\left\{ G_1(Q): Q\in G_1 \right\}$. Later generations, $G_3, G_4, \dots$ are defined inductively. Note that each $Q\in G_{p+1}$ is contained in a unique $Q'\in G_p$ and $\lvert u(P_Q)-u(P_{Q'})\rvert \geq \frac{\epp}{10}.$

\begin{lemma}
	There exist $0<\mu<1$, and $N=N(\lambda,n,\mu)$ such that 
	\begin{equation*}
		\sum_{Q_j\in G_1}\lvert Q_j\rvert \leq C_{\epp,\lambda,n,(\ref{assumption}),\,\mu}\iint_{T(Q_0)\backslash\cup_{Q_j\in G_1}T(Q_j)} t\lvert \nabla u\rvert^2 dxdt+(1-\mu)\lvert Q_0\rvert\,,
	\end{equation*}
	\label{lma.1}
and more generally, if $Q'\in G_p$, we have
\begin{equation*}
	\sum_{Q_j\in G_1(Q')}\lvert Q_j\rvert \leq C_{\epp,\lambda,\mu,(\ref{assumption}),\,\mu}\iint_{T(Q')\backslash\cup_{Q_j\in G_1(Q')} T(Q_j)} t\lvert \nabla u\rvert^2 dxdt+(1-\mu)\lvert Q'\rvert.
\end{equation*}
\end{lemma}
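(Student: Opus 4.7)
The strategy is to convert the pointwise oscillation supplied by Claim~\ref{c5.12} into an $L^2$-bound, and then to absorb it via Estimate~\ref{est1} applied to a carefully tuned Lipschitz sawtooth. By construction, each $Q_j \in G_1$ satisfies $|u(P_{Q_j})-u(P_{Q_0})| \ge \epp/10$, and since $\|u\|_\infty \le 1$ we may apply Claim~\ref{c5.12} with $A := u(P_{Q_0})$ to obtain $|u(X)-u(P_{Q_0})| \ge \epp/20$ for every $X$ in the top-cube $\widetilde{Q}_j$. Writing $\widehat{Q}_j \subset Q_j$ for the horizontal projection of $\widetilde{Q}_j$, we have $|\widehat{Q}_j| \gtrsim \eta^n|Q_j|$, and $(x,\ell(Q_j))\in\widetilde{Q}_j$ for every $x\in\widehat{Q}_j$.

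The next step is to build a Lipschitz profile $\psi:\rn \to [0,\ell(Q_0)/8]$ with $\|\nabla\psi\|_\infty \le M = M(\eta)$ such that (i) $\psi(x) \ge \ell(Q_j)$ on each $Q_j \in G_1$, and (ii) there is a subset $F_j \subset \widehat{Q}_j$ of measure $\gtrsim \eta^n|Q_j|$ on which $\psi(x) = \ell(Q_j)$ exactly. An explicit candidate is a glued family of pyramid profiles peaked on each $\widehat{Q}_j$; the upper bound $\psi \le \ell(Q_0)/8$ follows from $\max_j \ell(Q_j) \le 2^{-N}\ell(Q_0)$ once $N \ge 3$. Property~(i) gives the crucial containment $\Omega_\psi \cap T(Q_0) \subset T(Q_0)\setminus \bigcup_j T(Q_j)$, while property~(ii) ensures $(x,\psi(x))\in\widetilde{Q}_j$ for $x\in F_j$, which is exactly where Claim~\ref{c5.12} applies. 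Now invoke Estimate~\ref{est1} with $Q = Q_0$ and this $\psi$,
$$\int_{(1-s_n)Q_0}\!|u(x,\psi(x))-u(P_{Q_0})|^2\,dx \;\le\; C_{M,\eta}\!\iint_{\Omega_\psi\cap T(Q_0)} t\,|\nabla u|^2\,dt\,dx,$$
bound the left side from below by restricting to $\bigcup_j\bigl(F_j\cap (1-s_n)Q_0\bigr)$ and using $|u-u(P_{Q_0})|^2 \ge (\epp/20)^2$ there, and use the containment on the right, to obtain
$$\epp^2\,\eta^n\!\!\!\sum_{\substack{Q_j\in G_1\\ \widehat{Q}_j\subset (1-s_n)Q_0}}\!\!\!|Q_j| \;\lesssim\; C_{M,\eta}\iint_{T(Q_0)\setminus\bigcup_j T(Q_j)} t\,|\nabla u|^2\,dt\,dx.$$

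The $Q_j\in G_1$ excluded from the sum lie within an $(s_n + 2^{-N}\ell(Q_0))$-collar of $\partial Q_0$, so their combined measure is at most $C_n(s_n + 2^{-N})|Q_0|$. Given any target $\mu\in(0,1)$, one first picks $s_n$ small and then $N = N(n,\mu)$ large enough that this last quantity is $\le (1-\mu)|Q_0|$; this supplies the error term and proves the first inequality. The second inequality, with $Q'\in G_p$ in place of $Q_0$ and $G_1(Q')$ in place of $G_1$, follows by the identical argument, since Claim~\ref{c5.12}, Estimate~\ref{est1}, and the geometric construction are all translation- and dilation-invariant in the relevant sense. The main obstacle is the construction of $\psi$ in the middle paragraph: adjacent $G_1$-cubes may have vastly different side-lengths (differing by factors of $2^{kN}$), so enforcing property~(ii) while preserving property~(i) requires local truncation of the pyramid heights and forces the Lipschitz constant $M$ to grow with $\eta^{-1}$; this dependence is harmless because Estimate~\ref{est1} allows constants depending on $M$ and $\eta$.
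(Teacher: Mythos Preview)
Your overall strategy matches the paper's: convert the stopping-time oscillation into an $L^2$ lower bound via Claim~\ref{c5.12}, then invoke Estimate~\ref{est1} on a Lipschitz sawtooth whose graph sits above the $G_1$-boxes but touches their tops. The gap is in the construction of $\psi$, and it is not the harmless $\eta$-dependence you describe.

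Suppose $Q_k\in G_1$ shares a face with $Q_j\in G_1$ and $\ell(Q_k)=2^{mN}\ell(Q_j)$. Your property~(i) forces $\psi\ge\ell(Q_k)$ on $Q_k$, so any $M$-Lipschitz $\psi$ satisfies $\psi(x)\ge\ell(Q_k)-M\,\dist(x,Q_k)$ for $x\in Q_j$. Since $\dist(x,Q_k)\le\sqrt{n}\,\ell(Q_j)$ for $x\in\widehat Q_j$, property~(ii) would require $\ell(Q_j)\ge\ell(Q_k)-M\sqrt{n}\,\ell(Q_j)$, i.e.\ $M\gtrsim 2^{mN}$. Nothing in the stopping-time construction bounds $m$: adjacent $G_1$-cubes may have arbitrarily disparate side lengths, so no $M=M(\eta,n,\lambda)$ can make (i) and (ii) hold simultaneously for every $Q_j\in G_1$. ``Local truncation of the pyramid heights'' does not help: lowering the pyramid over $Q_k$ violates~(i) on $Q_k$, and the constant $C_{M,\eta}$ in Estimate~\ref{est1} blows up with $M$, so the dependence is not absorbable.

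The paper confronts exactly this obstruction by giving up the requirement that the graph meet the top of \emph{every} $G_1$-box. It takes $\Psi_1$ to be the upper boundary of the union of downward cones $\Gamma_\delta+P$ over $P\in\cup_j T(Q_j)$; this is automatically $(1/\delta)$-Lipschitz and satisfies your~(i), but $\Psi_1\equiv\ell(Q_j)$ on $Q_j$ only for the \emph{uncovered} cubes --- those not shadowed by a taller neighbor's cone (Fact~\ref{fact.3}). The real work (Facts~\ref{fact.1}--\ref{fact.5}) is then combinatorial: every $Q_j\in G_1$ belongs to a tree rooted at an uncovered cube, each tree has bounded spread, and hence $\sum_{G_1}|Q_j|\le c_n\sum_{\tilde G_1}|Q_j|$. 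Only after this reduction to uncovered cubes can Estimate~\ref{est1} be applied as you intend. Your proposal is missing this reduction, which is the substantive content of the lemma.
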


\begin{proof}
	We prove the first estimate, the proof of the second one being the same. Consider the infinite
	downward cone,  $\Gamma_\delta:=\left\{ (x,t): |x|<-\delta t, t<0 \right\}$, where $\delta>0$
	is small. Let $U_1= \cup T(Q_j)$, $Q_j\in G_1$. Consider $\Omega_-=\left( \cup_{P\in U_1} (P+\Gamma_\delta) \right)\cap T(Q_0)$ and also $\Omega_+=\left( T(Q_0)\backslash\Omega_- \right)^\circ.$

We begin with several observation. If $Q\in G_1$, then $l(Q)\leq 2^{-N}$, by the definition of ``dyadic'' and the fact that $Q_0\notin G_1$. Also, $\Omega=\cup_{P\in U_1}(\Gamma_\delta+P)$ is a domain given as the domain below the graph of a Lipschitz function $\Psi_1$, whose Lipschitz constant is less than $\frac{1}{\delta}$. (One way to see this is that $\Omega$ verifies the uniform infinite exterior and interior cone conditions with respect to uniform vertical cones, since $U_1$ is given by a graph). The next observation is that, for $N>2, 0\leq \Psi_1\leq \frac{1}{4}$ on $Q_0$. Another observation is that $\Omega_+\cap U_1=\emptyset$. Let $Q_{i_0}, Q_{i_1}\in G_1$ be given. We say that ``$Q_{i_0}$ partially covers'' 
$Q_{i_1}$ if $Q_{i_0}\neq Q_{i_1}$, and 
\begin{equation*}
	\left[ \left[ \cup_{P\in T(Q_{i_0})}(\Gamma_\delta+P) \right] \cap T(Q_{i_0})) \right] \cap \text{top} 
	\,T(Q_{i_1}) \neq \emptyset,
\end{equation*}
where we note that $\text{top}\, T(Q_{i_1})=\text{top} \,S(Q_{i_1})$, and $\cup_{P\in T(Q_{i_0})}(\Gamma_\delta+P)\cap T(Q_0)=\cup_{P\in \text{top}\, T(Q_{i_0})}(\Gamma_\delta+P)\cap T(Q_0)$.

Note that if $Q_{i_0}$ partially covers $Q_{i_1}$, we must have $l(Q_{i_1})<l(Q_{i_0})$.

We say that $Q_{i_0}, Q_{i_1}, \dots, Q_{i_k}\in G_1$ are such that 
$(Q_{i_0}, Q_{i_1}, \dots, Q_{i_k})$ forms a chain starting at $Q_{i_0}$ and ending at $Q_{i_k}$, 
if for each $0\leq j\leq k-1$, $Q_{i_j}$ partially covers $Q_{i_{j+1}}$. Fix $Q_{i_0}\in G_1$. We define $T_r(Q_{i_0})$, the tree with  $\text{top } Q_{i_0}$, by
\begin{multline*}
T_r(Q_{i_0}):=\\[4pt]
\left\{ \text{all intervals }Q_j\in G_1: \text{there exists a chain starting at } Q_{i_0}, \text{ending at } Q_j \right\}\cup Q_{i_0}.
\end{multline*}
Finally, we say that $Q_{j_0}\in G_1$ is ``uncovered'' if there exists no $Q\in G_1$ with $Q$ partially covering $Q_{j_0}$.

\begin{fact}\label{fact.1}
For $\delta$ small, $T_r (Q_{j_0})\subset 8Q_{j_0}$, for any $Q_{j_0}\in G_1$, where $8Q_{j_0}$ is the cube with length $8l(Q_{j_0})$ and same center as $Q_{j_0}$.
\end{fact}

\begin{proof}
	Let $Q_j\neq Q_{j_0}\in T_r(Q_{j_0})$. Then there exists a chain $\left( Q_{j_0}, Q_{j_1}, \dots, Q_{j_k} \right)$ with $Q_{j_k}=Q_j$. Note that since $Q_{j_s}$ partially covers 
$Q_{j_{s+1}}, s\geq 0, l(Q_{j_{s+1}})\leq 2^{-N}l(Q_{j_s}) $. Also note that if $Q_{i_0}$ partially 
covers $Q_{i_1}, \exists y_1\in Q_{i_1}$ with
	\begin{equation*}
		\left( y_1, l(Q_{i_1}) \right)\in \left[ \cup_{P\in T(Q_{i_0})}(\Gamma_\delta+P)\cap 
T(Q_0) \right]=\left[ \cup_{P\in\text{top} \,T(Q_{i_0})}(\Gamma_\delta+P)\cap T(Q_0) \right].
	\end{equation*}
	Since $(y_1, l(Q_{i_1}))\in T(Q_0)$, (because $Q_{i_1}\subset Q_0$), there exists 
	$P=\left( x,l(Q_{i_0}) \right), x\in Q_{i_0}$ such that $(y_1, l(Q_{i_1}))\in \Gamma_\delta+P$, 
	i.e., $\lvert y_1-x\rvert<-\delta \left[ l(Q_{i_1})-l(Q_{i_0}) \right]
	=\delta\left[ l(Q_{i_0})-l(Q_{i_1}) \right].$

	Let $x_{i_0}=$ center of $Q_{i_0}$, $r_{i_0}=\max_{x\in Q_{i_0}}\lvert x-x_{i_0}\rvert$, 
	so that $r_{i_0}=c_n l(Q_{i_0}).$ For any $\tilde y\in Q_{i_1}$,
	\begin{multline*}
		\lvert x_{i_0}-\tilde y\rvert \leq \lvert x-x_{i_0}\rvert
		+\lvert x-y_1\rvert +\lvert y_1-\tilde y\rvert \\[4pt]
\leq c_n l(Q_{i_0})+\delta\left[ l(Q_{i_0})-l(Q_{i_1}) \right]+2c_n l(Q_{i_1})\leq 4c_nl(Q_{i_0}),
	\end{multline*}
	if $\delta$ is small.

	Next note that if $(Q_{j_0}\dots,Q_{j_k})$ is a chain, then $l(Q_{j_k})\leq 
	2^{-kN}l(Q_{j_0}).$ Suppose now that $\tilde y\in Q_{j_k}=Q_j$. 
Then, $\lvert \tilde y-x_{j_{k-1}}\rvert \leq 4c_n l(Q_{j_{k-1}})$ by the previous estimate and
	\begin{equation*}
		\lvert x_{j_s}-x_{j_{s-1}}\rvert \leq 4c_n l(Q_{j_{s-1}}), s=1,\dots,k
	\end{equation*}

	Hence,
	\begin{eqnarray*}
		\lvert \tilde y-x_{j_{0}}\rvert &\leq& \lvert \tilde y-x_{j_{k-1}}\rvert+\lvert x_{j_{k-1}}-x_{j_{k-2}}\rvert+\dots+\lvert x_{j_1}-x_{j_0}\rvert\\
		&\leq& 4c_nl(Q_{j_{k-1}})+4c_nl(Q_{j_{k-2}})+\dots+4c_nl(Q_{j_0})\\
		&\leq&4c_nl(Q_{j_0})\left[ 1+\frac{1}{2^N}+\frac{1}{2^{2N}}+\dots+\frac{1}{2^{(k-1)N}} \right]\\
		&\leq&8c_nl(Q_{j_0}),
	\end{eqnarray*}
	where we used $l(Q_{j_s})\leq 2^{-Ns}l(Q_{j_0})$, which follows because $(Q_{j_0},\dots,Q_{j_s})$ is a chain. Fact \ref{fact.1} follows.
\end{proof}

\begin{fact}\label{fact.2}
\begin{equation*}
	\lvert \cup_{Q\in T_r(Q_{j_0})} Q\rvert \leq c_n\lvert Q_{j_0}\rvert.
\end{equation*}
\end{fact}
Follows from Fact \ref{fact.1} and the disjointness of the intervals in $G_1$.

\begin{fact}\label{fact.3}
Assume that $Q_{j_0}\in G_1$ is ``uncovered''. Then, $(x,l(Q_{j_0})), x\in Q_{j_0}$, belongs to the graph of $\Psi_1$, i.e. $\Psi_1(x)=l(Q_{j_0}), \,x\in Q_{j_0}$, and hence to the boundary of $\Omega_+\cap T(Q_0)$.
\end{fact}

This is immediate from the definition of $\Omega_-, \Psi_1$ and the definition of ``partially covers'' and ``uncovered''.

Let now $\tilde G_1=\left\{ Q\in G_1: Q \text{ is ``uncovered''} \right\}$.

\begin{fact}\label{fact.4}
	\begin{equation*}
		G_1=\cup_{Q\in \tilde G_1} T_r(Q).
	\end{equation*}
\end{fact}

It suffices to show $G_1\subset \cup_{Q\in \tilde G_1}T_r(Q)$. Define $i_1=
\min\left\{ i: l(Q)=2^{-iN}, Q\in G_1 \right\}$. Let $G_{1,1}=G_1, \tilde G_{1,1}=
\left\{ Q\in G_1: l(Q)=2^{-i_1N} \right\}$. Note that if $Q\in \tilde G_{1,1}$, then $Q$ is ``uncovered'', because if $Q'$ partially covers $Q$, $l(Q)\leq 2^{-N}l(Q')$ which is impossible for $Q\in \tilde G_{1,1}$ since $l(Q)$ is maximal among lengths in $G_1$. Note also that $i_1\geq 1$. Next, let $G_{1,2}=G_1\backslash \cup_{Q\in \tilde G_{1,1}}T_r(Q)$. Let $i_2=\min\left\{ i: l(Q)=2^{-iN}, Q\in G_{1,2} \right\}$, unless $G_{1,2}=\emptyset$, in which case the process stops. Note that unless the process stops, $i_2>i_1$. Let now $\tilde G_{1,2}=\left\{ Q\in G_{1,2}: l(Q)=2^{-i_2N} \right\}$. We claim that if $Q_1\in \tilde G_{1,2}$, then $Q_1$ is ``uncovered''. Suppose not, let $Q'\in G_1$ partially cover $Q_1$. Then, 
$l(Q_1)<l(Q')$, so $Q'$ cannot belong to $G_{1,2}$. Hence, 
$Q'\in \cup_{Q\in \tilde G_{1,1}}T_{r}(Q)$. Thus, there exists $Q\in \tilde G_{1,1}$ such that $Q'\in T_r(Q)$, i.e., $\exists \left( Q_{i_0},\dots,Q_{i_k} \right)$ a chain, with $Q=Q_i\in \tilde G_{1,1}, Q_{i_k}=Q'$. 
But then, since $\left( Q_{i_0},\dots,Q_{i_k},Q_1 \right)$ is a chain, $Q_1\in T_{r}(Q_{i_0}), Q_0\in 
\tilde G_{1,1}$, which contradicts the fact that $Q_1\in G_{1,2}$. Thus, $Q_1$ is ``uncovered''. Next, we define
\begin{equation*}
	G_{1,3}=G_{1,2}\backslash \bigcup_{Q\in \tilde G_{1,2}} T_r(Q) 
	= G_1\backslash \left[ \cup_{Q\in \tilde G_{1,1}}T_r(Q) \bigcup \cup_{Q\in \tilde G_{1,2}}T_r(Q) \right].
\end{equation*}

Let $i_3=\min\left\{ i: l(Q)=2^{-iN}, Q\in G_{1,3} \right\}$ (unless $G_{1,3}=\emptyset$ in which case the process stops). If the process does not stop, we let $\tilde G_{1,3}=\left\{ Q\in G_{1,3}:l(Q)
=2^{-i_3N} \right\}$. We claim that if $Q_1\in \tilde G_{1,3}$ then $Q_1$ is ``uncovered''. If not, 
$\exists Q'\in G_1$, with $Q'$ partially covering $Q_1$, so that $l(Q_1)<l(Q')$. Hence, $Q'$ cannot belong to $G_{1,3}$. If $Q'\in \cup_{Q\in \tilde G_{1,2}}T_r(Q)$, we reach a contradiction as before. Hence, $Q'$ cannot belong to $G_{1,2}$, since $G_{1,2}=G_{1,3}\cup 
\left[ \cup_{Q\in \tilde G_{1,2}}T_r(Q) \right]$. Since $G_1=G_{1,2}\bigcup 
\cup_{Q\in \tilde G_{1,1}}T_r(Q), Q'\in \cup_{Q\in \tilde G_{1,1}}T_r(Q)$. But then 
$Q_1\in \cup_{Q\in \tilde G_{1,1}}T_r(Q)$, a contradiction. We continue inductively in this manner. If the process stops at stage $k$, we have
\begin{equation*}
	G_1\subset \bigcup_{Q\in \tilde G_{1,k-1}}T_r(Q)\cup\bigcup_{Q\in \tilde G_{1,k-2}}T_r(Q)\cup\dots\cup\bigcup_{Q\in\tilde G_{1,1}}T_r(Q),
\end{equation*}
and Fact \ref{fact.4} follows. If the process never stops, $i_k\uparrow \infty$ and it is also easy to verify Fact \ref{fact.4}.

\begin{fact}\label{fact.5}
	\begin{equation*}
		\Sigma_{Q_j\in G_1}\lvert Q_j\rvert \leq c_n \sum_{Q_j\in 
		\tilde G_1}\lvert Q_j\rvert,\quad\quad\quad\quad (c_n>1).
	\end{equation*}
\end{fact}
Let $O_1=\cup_{Q_j\in G_1}Q_j$, $\lvert O_1\rvert=\sum_{Q_j\in G_1}\lvert Q_j \rvert$. 
Now use Fact \ref{fact.4} and Fact \ref{fact.2}.

\emph{End of the proof of Lemma \ref{lma.1}}: For $\mu$ to be chosen, $N$ to be chosen, consider now:

\begin{case}
	\begin{equation*}
		\sum_{Q_j\in G_1}\lvert Q_j\rvert \leq (1-\mu)\lvert Q_0\rvert.
	\end{equation*}
	In this case Lemma \ref{lma.1} clearly holds.
\end{case}

\begin{case}\label{case.2}
	\begin{equation*}
		\sum_{Q_j\in G_1}\lvert Q_j\rvert > (1-\mu)\lvert Q_0\rvert.
	\end{equation*}
\end{case}

	Consider now $\tilde G_1'=\left\{ Q_j\in \tilde G_1: 
Q_j\subset (1-s_n)Q_0 \right\}$. Let 
$$\tilde G_1'':=\left\{ Q_j\in \tilde G_1: Q_j\cap \left( Q_0\backslash (1-s_n)Q_0 \right)\neq 
\emptyset \right\}.$$ We claim that if $Q_j\in \tilde G_1''$, then, if $N$ is large enough, $Q_j\cap (1-2s_n)Q_0=\emptyset$. Let $x_0=$ center of 
$Q_0, x_1\in Q_j\cap (Q_0\backslash (1-s_n)Q_0), x\in Q_j$. Then, 
$$\lvert x-x_0\rvert \geq \lvert x_1-x_0\rvert-\lvert x-x_1\rvert \geq d_n(1-s_n)l(Q_0)-2d_n2^{-N}l(Q_0)
\geq d_n(1-2s_n)l(Q_0),$$ 
for $N$ large, where $d_n$ is chosen so that if $Q$ 
is a cube with center $x_Q$ and length $l(Q)$, then, for $x\in Q$, $\lvert x-x_Q\rvert \leq d_n l(Q)$.

	Because of the claim, $\sum_{Q_j\in \tilde G_1''}\lvert Q_j\rvert \leq 
	\left[ 1-(1-2s_n)^n \right]\lvert Q_0\rvert$. But then, if we choose $s_n$ so 
	small that, with $c_n$ as in Fact \ref{fact.5}, we have $c_n\left[ 1-(1-2s_n)^n \right]\leq \delta_n$, where $2\delta_n<1$ and $\mu=\delta_n$, then

	\begin{eqnarray*}
		(1-\mu)\lvert Q_0\rvert &\leq& \sum_{Q_j\in G_1}\lvert Q_j\rvert \leq 
		c_n\sum_{Q_j\in \tilde G_1}\lvert Q_j\rvert\\
		&\leq&c_n\sum_{Q_j\in \tilde G_1'}\lvert Q_j\rvert + 
		c_n\sum_{Q_j\in \tilde G_1''}\lvert Q_j\rvert\\
		&\leq&c_n\sum_{Q_j\in \tilde G_1'}\lvert Q_j\rvert + 
		c_n\left[ 1-(1-2s_n)^n \right]\lvert Q_0\rvert\\
		&\leq&c_n\sum_{Q_j\in \tilde G_1'}\lvert Q_j\rvert + \delta_n\lvert Q_0\rvert\,.
	\end{eqnarray*}
Then $(1-2\delta_n)\lvert Q_0\rvert\leq c_n\sum_{Q_j\in \tilde G_1'}\lvert Q_j\rvert$, and so
	\begin{equation*}
		\sum_{Q_j\in G_1}\lvert Q_j\rvert \leq \frac{c_n}{(1-2\delta_n)}\sum_{Q_j\in 
		\tilde G_1'}\lvert Q_j\rvert.
	\end{equation*}
Hence, using estimate (\ref{assumption}), the construction of generation cubes, the claim at the start of the proof of \ref{thm.a} and Fact \ref{fact.3}, we get
	\begin{equation*}
		\lvert u(P_{Q_0})-u(X)\rvert \geq \frac{\epp}{100},\quad X\in \widetilde Q_j.
	\end{equation*}
	\begin{eqnarray*}
		\int_{ \left\{ (x,\Psi_1(x)): x\in (1-s_n)Q_0 \right\}}\lvert u-u(P_{Q_0})\rvert^2 
		&\leq& C_{\delta,\eta,\lambda,n,(\ref{assumption})}\iint_{\Omega_+\cap T(Q_0)} t\lvert \nabla u\rvert^2 dxdt\\
		&\leq& C_{\delta,\eta,\lambda,n,(\ref{assumption})}\iint_{\leftexp{c}{U_1}\cap T(Q_0)} t\lvert \nabla u\rvert^2 dxdt,
	\end{eqnarray*}
	since $\Omega_+\cap T(Q_0)\subset \leftexp{c}{U_1}\cap T(Q_0)$. Thus,
	\begin{equation*}
		\frac{\epp^2}{100^2}\sum_{Q_j\in \tilde G_1'}\lvert Q_j\rvert \leq 
		C_{\delta,\eta,\lambda,n,(\ref{assumption})}\iint_{T(Q_0)\backslash \cup_{Q\in G_1}T(Q)} 
		t\lvert \nabla u\rvert^2,
	\end{equation*}
	which shows that in case \ref{case.2}, $\frac{\epp^2}{100^2}\sum_{Q_j\in G_1}\lvert Q_j\rvert \leq C_{\delta,n,\lambda,\mu,(\ref{assumption})}\iint_{T(Q_0)\backslash \cup_{Q\in G_1}T(Q)}t\lvert \nabla u\rvert^2$, finishing the proof of Lemma \ref{lma.1}.
\end{proof}
Recall that $Q$ is a ``generation cube'' if $Q\in G_p$ for some $p\geq 1$. We define 
	$G_0=\{Q_0\}$.

	\begin{lemma}{``Packing property''}
		Let $Q$ be a ``dyadic'' cube $\subset Q_0$. Then
		\begin{equation*}
			\sum_{Q_j\subset Q, Q_j\text{ a generation cube}}\lvert Q_j\rvert \leq 
			C_{\lambda,n,\epp,\eta,N,\mu,(\ref{assumption}),(\ref{est.2})}\lvert Q\rvert.
		\end{equation*}
		\label{lma.2}
	\end{lemma}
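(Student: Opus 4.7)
The plan is to iterate the energy estimate of Lemma \ref{lma.1} over the tree of generation cubes contained in $Q$, converting the factor $(1-\mu)<1$ gained at each level into a geometric bound, and controlling the Carleson-energy error term at the root via Estimate \ref{est.2}.

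First I would reduce to the case $Q\in G_p$ for some $p\ge 0$ (the second statement of Lemma \ref{lma.1} applies to every such $Q$, including $Q_0\in G_0$). Any two generation cubes are either disjoint or nested, so the maximal elements of $\{Q':\,Q'\text{ generation cube},\,Q'\subset Q\}$ form a pairwise disjoint family $\{Q_j^\ast\}$ with $\sum_j|Q_j^\ast|\le|Q|$, and every generation cube contained in $Q$ lies inside a unique $Q_j^\ast$. Hence it suffices to prove the desired bound with $Q_j^\ast$ in place of $Q$, which together with the case $Q=Q_0\in G_0$ reduces matters to $Q\in G_p$ for some $p\ge 0$.

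Fix such a $Q$, and set inductively $G^{(0)}:=\{Q\}$ and $G^{(m+1)}:=\bigcup_{Q'\in G^{(m)}}G_1(Q')$. Then $G^{(m)}\subset G_{p+m}$, and every generation cube contained in $Q$ lies in some $G^{(m)}$. Define
$$a_m:=\sum_{Q'\in G^{(m)}}|Q'|\,,\qquad b_m:=\sum_{Q'\in G^{(m)}}\iint_{T(Q')}t\,|\nabla u|^2\,dx\,dt\,.$$
The cubes in $G^{(m)}$ are pairwise disjoint, so their Carleson boxes are also disjoint, and $\bigcup_{Q_j\in G_1(Q')}T(Q_j)\subset T(Q')$; summing the second inequality of Lemma \ref{lma.1} over $Q'\in G^{(m)}$ therefore yields
$$a_{m+1}\,\le\,C'\bigl(b_m-b_{m+1}\bigr)\,+\,(1-\mu)\,a_m\,.$$

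Since $a_0=|Q|$, $b_0\le C''|Q|$ by Estimate \ref{est.2}, and $b_m\ge 0$, summing this inequality from $m=0$ to $M-1$ telescopes the $b$-differences:
$$\sum_{m=1}^{M}a_m\,\le\,C'\bigl(b_0-b_M\bigr)+(1-\mu)\sum_{m=0}^{M-1}a_m\,\le\,C'C''|Q|+(1-\mu)\sum_{m=0}^{M}a_m\,.$$
Adding $a_0=|Q|$ on the left and rearranging gives $\mu\sum_{m=0}^{M}a_m\le(1+C'C'')|Q|$ uniformly in $M$. Letting $M\to\infty$,
$$\sum_{\substack{Q_j\subset Q\\ Q_j\,\text{generation}}}|Q_j|\,\le\,\sum_{m=0}^{\infty}a_m\,\le\,\frac{1+C'C''}{\mu}\,|Q|\,,$$
which is the desired packing bound with constants of the required form. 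The argument is essentially careful bookkeeping: the two substantive ingredients, namely the uniform geometric gain $(1-\mu)$ at each tree level and the telescoping control of the Carleson energy at the root, are already supplied by Lemma \ref{lma.1} and Estimate \ref{est.2}, so no further analytic obstacle is expected.
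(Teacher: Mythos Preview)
Your proof is correct and follows essentially the same route as the paper: reduce to maximal generation cubes contained in $Q$, then for each such root iterate Lemma~\ref{lma.1} down the generation tree, using the factor $(1-\mu)$ for absorption and Estimate~\ref{est2} at the root. The only cosmetic difference is that you introduce $b_m=\sum_{Q'\in G^{(m)}}\iint_{T(Q')}t|\nabla u|^2$ and telescope the differences $b_m-b_{m+1}$, whereas the paper observes directly that the ``shell'' regions $T(Q')\setminus\bigcup_{Q''\in G_1(Q')}T(Q'')$ are pairwise disjoint over all levels and sum to at most $\iint_{T(Q_1)}t|\nabla u|^2$; these are two phrasings of the same identity.
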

	\begin{proof}
		Let $M(Q)=\left\{ \text{maximal generation cubes contained in } Q\right\}$, i.e., $Q_1\in M(Q)$ 
		if $Q_1$ is a generation cube and $\nexists Q'$, 
		a generation cube, $Q'\subset Q$ with $Q_1\subsetneq Q'$. Note that the cubes in $M(Q)$ are pairwise disjoint, and any generation cube $Q_j$ contained in $Q$ is contained in a unique maximal $Q_1\in M(Q)$. By disjointness, $\sum_{Q_1\in M(Q)}\lvert Q_1\rvert \leq \lvert Q\rvert$.

		By the construction, we must have 
$$\left\{ Q_j: Q_j\subset Q, Q_j \text{ is a generation cube} \right\}
=\cup_{Q_1\in M(Q)}\cup_{p\geq 0} G_p(Q_1).$$

		Fix $Q$ and fix a maximal generation cube contained in $Q$, $Q_1$. We define 
$G_0:=G_0(Q_1)=\{Q_1\}$, and $G_1:=G_1(Q_1),\,G_2:= G_2(Q_1),...$, etc., 
analogously to $G_p(Q_0)$ above.
We define $U_0=Q_1, U_1=\cup_{Q'\in G_1(Q_1)}Q', U_2=\cup_{Q'\in G_2(Q_1)}Q'$, etc., and 
note that 
$$U_{p+1}=\cup_{Q'\in G_p(Q_1)}U_1(Q')\,.$$
Thus, $\lvert U_{p+1}\rvert =\sum_{Q'\in G_p}\lvert U_1(Q')\rvert$. 
By Lemma \ref{lma.1}, for $p=0,1,\dots$, we have
		\begin{eqnarray*}
			\lvert U_{p+1}\rvert &\leq& C\sum_{Q'\in G_p}\iint_{T(Q')\backslash \cup_{Q''\in G_1(Q')}T(Q'')}t\lvert \nabla u\rvert^2 +(1-\mu)\sum_{Q'\in G_p}\lvert Q'\rvert\\
			&=& C\sum_{Q'\in G_p}\iint_{T(Q')\backslash \cup_{Q''\in G_1(Q')}T(Q'')}t\lvert \nabla u\rvert^2 +(1-\mu)\lvert U_p\rvert\,.
		\end{eqnarray*}
Thus, using the disjointness of the regions $T(Q')\backslash \cup_{Q''\in G_1(Q')}T(Q'')$ for each fixed $p$, in $Q'$ and for consecutive $p$'s, and summing in $p$, we obtain:
		\begin{equation*}
			\sum_{p=0}^\infty \lvert U_{p+1}\rvert \leq C\iint_{T(Q_1)}t\lvert \nabla u\rvert^2+(1-\mu)\sum_{p=0}^\infty\lvert U_p\rvert
		\end{equation*}
		Thus, $\mu\sum_{p=1}^\infty \lvert U_p\rvert \leq C\iint_{T(Q_1)}t\lvert \nabla u\rvert^2+(1-\mu)\lvert Q_1\rvert$ and using (\ref{est.2}), we obtain $\sum_{p=1}^\infty \lvert U_p\rvert \leq C\lvert Q_1\rvert$ or, for each $Q_1\in M(Q)$,
		\begin{equation*}
			\sum_{p\geq 0}\sum_{Q_j\in G_p(Q_1)}\lvert Q_j\rvert \leq 
C_{\lambda,n,\epp,\eta,N,\mu,(\ref{assumption}),(\ref{est.2})}\lvert Q_1\rvert.
		\end{equation*}
		If we now sum over $Q_1\in M(Q)$, Lemma \ref{lma.2} follows.
	\end{proof}

	\subsection{The Stopping Time Construction, Part 2}
	For each generation cube $Q$, we define the corresponding Carleson box $T(Q)$ and the ``rectangle'' $S=S(Q)$. We call the resulting $T(Q)$'s ``generation boxes''. For each generation box 
	$T(Q)$, we define the ``dyadic sawtooth region" 
$\Omega(Q)=T(Q)\backslash \cup_{Q_i\in G_1(Q)}T(Q_i)$. 

Note that if $Q'\subset Q_0$ is a ``dyadic'' sub-cube, then $S=S(Q')$ 
is contained in a unique $\Omega(Q)$.
The uniqueness comes from the fact that if two generation intervals $Q_j$, $Q_i$ are distinct, their associated regions $\Omega(Q_j)$, $\Omega(Q_i)$ have disjoint interiors. The fact that $S$ is contained in some $\Omega(Q)$ 
is due to the fact that if $l_p=\max\left\{ l(Q): Q\in G_p \right\}$, then $l_p\to 0$.

Next, relative to $\reu$, 
for each generation cube $Q$, $\partial \Omega(Q)$ consists of horizontal and vertical ``segments''. The intersection of these ``segments'' with any box $T(Q')$ 
have $H^{n}$ measure adding up to at most $c_n\lvert Q'\rvert$, since 
$H^{n}(\partial T(Q))=c_n\lvert Q\rvert$. Also, the $\left\{ Q_j \right\}$ in $G_1(Q)$, $Q$ a generation cube, are non-overlapping, by maximality. For each generation cube $Q_j$, 
including the unit cube $Q_0$, we define $\varphi_1(z)=u(P_{Q_j})$ on the interior of 
$\Omega(Q_j)$. Thus,
	\begin{equation*}
		\varphi_1(z)=\sum_{p=0}^\infty
		\sum_{Q_j\in G_p} u(P_{Q_j})\chi_{\stackrel{\circ}{\Omega}(Q_j)}.	
	\end{equation*}
We consider now $\lvert \nabla \varphi_1(z)\rvert$. As a distribution on $\RR_+^{n+1}$, 
$$\nabla \varphi_1=\sum_{p=0}^\infty\sum_{Q_j\in G_p}u(P_{Q_j})
\nabla \chi_{\stackrel{\circ}{\Omega} (Q_j)}\,.$$
It is easy to see that $\lvert \nabla \chi_{\stackrel{\circ}{\Omega}(Q_j)}\rvert = dH^{n}\lfloor_{\,\Sigma_j}$, 
where $\Sigma_j=\left\{ t>0 \right\}\cap \partial \Omega(Q_j)$. 
Since $\lvert u(P_{Q_j})\rvert\leq 1, \lvert \nabla \varphi_1\rvert\leq \sum_{p=0}^\infty
\sum_{Q_j\in G_p}\lvert \nabla \chi_{\stackrel{\circ}{\Omega}(Q_j)}\vert$. Thus, for fixed $Q$, we have
	\begin{equation*}
		\iint_{T(Q)}\lvert \nabla \varphi_1\rvert \leq \sum_{p,j} H^{n}(T(Q)\cap \Sigma_j).
	\end{equation*}

\begin{claim}
	\begin{equation*}
		\sum_{p,j}H^{n}(T(Q)\cap \Sigma_j)\leq 
		C_{\epp,\lambda,\mu,(\ref{assumption}),(\ref{est.2})}\lvert Q\rvert.
	\end{equation*}
\end{claim}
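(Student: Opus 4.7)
The plan is to bound $H^n(T(Q)\cap\Sigma_j)$ cube-by-cube, then split the sum according to whether the generation cube $Q_j$ is a descendant of $Q$ (handled by the packing property of Lemma~\ref{lma.2}) or an ancestor of $Q$ (handled by the disjointness of the maximal generation cubes inside $Q$).

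First I would make the geometric observation that for every generation cube $Q_j$, the set $\Sigma_j=\{t>0\}\cap\partial\Omega(Q_j)$ consists of the top face $Q_j\times\{\ell(Q_j)\}$, the vertical walls $\partial Q_j\times(0,\ell(Q_j))$, and, for each $Q_i\in G_1(Q_j)$, the top face $Q_i\times\{\ell(Q_i)\}$ together with the vertical walls $\partial Q_i\times(0,\ell(Q_i))$. A direct $H^n$-computation, using only that the cubes in $G_1(Q_j)$ are pairwise disjoint subsets of $Q_j$, yields the per-cube bound
$$H^n(\Sigma_j)\,\leq\,(2n+1)|Q_j|\,+\,(2n+1)\sum_{Q_i\in G_1(Q_j)}|Q_i|\,\leq\,c_n\,|Q_j|.$$

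Next I would split the generation cubes according to their dyadic relation with $Q$: since every generation cube is a ``dyadic'' sub-cube of $Q_0$, any two are either nested or disjoint. Disjoint $Q_j$ contribute nothing (their Carleson boxes miss $T(Q)$). For $Q_j\subseteq Q$ the crude bound $H^n(T(Q)\cap\Sigma_j)\leq H^n(\Sigma_j)\leq c_n|Q_j|$ suffices, and summing over all generation cubes $Q_j\subseteq Q$ gives a contribution of at most $c_n\sum|Q_j|\leq C|Q|$ by the packing estimate of Lemma~\ref{lma.2}.

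The slightly more delicate case is $Q_j\supsetneq Q$. Here the top of $T(Q_j)$ lies above $T(Q)$ because $\ell(Q_j)>\ell(Q)$, and the vertical walls of $T(Q_j)$ lie outside $T(Q)$ horizontally; so neither meets $T(Q)$, and only boundary pieces of the holes $T(Q_i)$, $Q_i\in G_1(Q_j)$, can contribute. Among those holes, only the $Q_i$ with $Q_i\subseteq Q$ actually intersect $T(Q)$ non-trivially (if $Q_i\supsetneq Q$, the same ``top too high, walls too far out'' argument gives zero contribution), and each such $Q_i$ contributes at most $c_n|Q_i|$. The key observation is that any such $Q_i$ is automatically maximal among generation cubes in $Q$: if a generation cube $Q'$ satisfied $Q_i\subsetneq Q'\subseteq Q\subsetneq Q_j$, then $Q'$ would be a generation cube strictly between $Q_i$ and its generation-parent $Q_j$, contradicting $Q_i\in G_1(Q_j)$. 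Hence the $Q_i$'s that arise from this case all belong to $M(Q)$ and are pairwise disjoint inside $Q$, giving a contribution of at most $c_n\sum_{Q_i\in M(Q)}|Q_i|\leq c_n|Q|$.

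Adding the three contributions yields $\sum_{p,j}H^n(T(Q)\cap\Sigma_j)\leq C|Q|$ with the constant depending only on the listed parameters. I expect the only real subtlety to be the bookkeeping in the ancestor case --- verifying that each relevant $Q_i$ is charged only once (it has a unique generation-parent) and lies in $M(Q)$; everything else is geometric and is driven by Lemma~\ref{lma.2}.
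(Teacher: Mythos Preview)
Your approach is essentially correct and close to the paper's, but there is one small oversight. When $Q_j$ is a generation cube disjoint from $Q$, it is not true that ``their Carleson boxes miss $T(Q)$'': the boxes are closed, so if $Q_j$ is \emph{adjacent} to $Q$ then $\overline{T(Q_j)}\cap T(Q)$ is a non-trivial piece of the vertical boundary of $T(Q)$, and $\Sigma_j\subset\overline{T(Q_j)}$ can meet $T(Q)$ there with positive $H^n$-measure. The same issue arises in your ancestor case: holes $Q_i\in G_1(Q_j)$ that are disjoint from $Q$ but adjacent to it also contribute along $\partial T(Q)$. The fix is easy: all such contributions lie on the lateral boundary $\partial T(Q)\cap\{t>0\}$, and since the regions $\Omega(Q_j)$ tile $T(Q_0)$, each point of that lateral boundary lies on $\partial\Omega(Q_j)$ for at most two indices $j$; hence the total of these boundary contributions is at most $c_n H^n(\partial T(Q)\cap\{t>0\})\leq c_n|Q|$. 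With this correction your argument is complete.

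For comparison, the paper handles the case $Q_j\nsubseteq Q$ by splitting according to whether $\ell(Q_j)\leq\ell(Q)$ or $\ell(Q_j)>\ell(Q)$, rather than by disjoint versus ancestor; in the first sub-case it observes (as above) that the contribution lies on the vertical sides of $T(Q)$, and in the second it argues somewhat tersely that only boundedly many such $Q_j$ contribute, each by at most $c_nH^n(\partial T(Q))$. Your explicit identification of the interior contributions from ancestors with cubes in $M(Q)$, together with the uniqueness of generation-parents, is in fact a cleaner bookkeeping of that second sub-case than the paper gives. The case $Q_j\subseteq Q$ is handled identically in both, via Lemma~\ref{lma.2}.
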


	To see this, first consider those $Q_j$ such that 
$$T(Q)\cap \Sigma_j=T(Q)\cap \partial \Omega(Q_j)\cap \left\{ t>0 \right\}\neq \emptyset,$$ 
but such that $Q_j\nsubseteq Q$. In this case, assume first that $l(Q_j)\leq l(Q)$. Then, 
$T(Q)\cap \Sigma_j$ is a union of ``intervals'' along a ``vertical" side of $T(Q)$. 
These ``intervals" are pairwise disjoint, so they contribute at most $c_nH^{n}(\partial T(Q))$. 
If $l(Q_j)>l(Q)$, there are at most $c_n$ such cubes, each contributes at most 
$c_n H^{n}(\partial T(Q))$. Next we consider generation cubes such that 
$Q_j\subset Q$. Then, 
$$\sum_{Q_j\subset Q}H^{n}(T(Q)\cap \Sigma_j)\leq c_n\sum_{Q_j\subset Q}\lvert Q_j\rvert\leq 
C_{\epp,\lambda,n,(\ref{assumption}),(\ref{est.2})}\lvert Q\rvert\,,$$ by 
Lemma \ref{lma.2}. Thus, $\lvert\nabla \varphi_1\rvert$ is a Carleson measure.

We now say that $S=S(Q)$ is a blue ``rectangle'' if
	\begin{equation*}
		\sup_{X,Y\in S}\lvert u(X)-u(Y)\rvert \leq \frac{\epp}{10}.
	\end{equation*}
	Otherwise, we say that $S$ is a red ``rectangle''. Assume that $S=S(Q)$ is a blue ``rectangle''. Let $Q_j$ be the unique generation cube so that $S(Q)\subset \Omega(Q_j)$. 
Because $S(Q)\subset \Omega(Q_j)$, $\lvert u(P_Q)-u(P_{Q_j})\rvert <\frac{\epp}{10}$. 
Since $P_Q\in S(Q)$, if $X\in S(Q)$, then
$\lvert u(P_Q)-u(X)\rvert <\frac{\epp}{10}$. Hence, $\lvert u(X)-u(P_{Q_j})\rvert \leq\frac{\epp}{5}$ 
for $X\in S(Q)$. But, $\varphi_1(X)=u(P_{Q_j})$ on $\Omega(Q_j)$, so that $\lvert \varphi_1(X)-u(X)\rvert \leq \frac{\epp}{5}$ on every blue $S$.

	The final step is to correct $\varphi_1$ in the red rectangles. $S=S(Q)$ is red if there exists $X_0, Y_0\in S$ such that
	\begin{equation*}
		\lvert u(X_0)-u(Y_0)\rvert > \frac{\epp}{10}.
	\end{equation*}
	Let $\widetilde S$ be the slightly fattened version of $S$,
as at the start of this section. By (\ref{eqn.4}),
	\begin{eqnarray*}
		\frac{\epp^2}{100}&\leq& C_{\lambda,n}^2\left( \frac{\lvert X_0-Y_0\rvert}{l(Q)} \right)^{2\alpha}l(Q)^2\aaviint_{\!\!\widetilde{S}}\,\lvert \nabla u\rvert^2\\
		&\leq& C_{\lambda,n}^2\frac{1}{l(Q)^n}\iint_{\widetilde S}t\,|\nabla u\rvert^2
	\end{eqnarray*}
	or
	\begin{equation*}
		|Q|\leq \frac{C_{\lambda,n}^2}{\epsilon^2}\iint_{\widetilde S}t\,\lvert \nabla u\rvert^2.
	\end{equation*}
	By the bounded overlap of $\left\{ \widetilde S \right\}$, we have:
	\begin{equation*}
		\sum_{Q_k\subset Q:  \,S(Q_k)\text{ red}}|Q_k|\leq \frac{C_{\lambda,n,(\ref{est.2})}^2}{\epsilon^2}\,\lvert Q\rvert\,, 
	\end{equation*}
	in view of estimate (\ref{est.2}).
Also, if $S$ is red, then by (\ref{eqn.3}) (with $S=S(Q)$),
	\begin{eqnarray*}
		\iint_S\lvert \nabla u\rvert &\leq& 
\left( \iint_S\lvert \nabla u\rvert^2 \right)^{\frac{1}{2}}l(Q)^{\frac{n+1}{2}}\\
		&\leq&\frac{C_{\lambda,n}}{l(Q)}\left( \iint_{\widetilde S}\lvert u\rvert^2 \right)^{\frac{1}{2}}l(Q)^{\frac{n+1}{2}}\\
		(\text{since } \lVert u\rVert_\infty\leq 1)&\leq& \frac{C_{\lambda,n}}{l(Q)}\cdot l(Q)^{n+1}
		\leq \frac{C_{\lambda,n}}{\epsilon^2}\iint_{\widetilde S}t\lvert \nabla u\rvert^2,
	\end{eqnarray*}
	by the previous estimate.

	Then, if $\RRR=\cup_{S=S(Q'), \,S\text{ red}}\,S$, and we consider $\lvert \nabla u\rvert\, 
\chi_{\RRR}$, 
also note that $T(Q)\cap \RRR=\cup_{Q':S(Q')\subset T(Q), \,S(Q')\text{ red}}\,S(Q')$. Then,
	\begin{eqnarray*}
		\iint_{T(Q)}\lvert \nabla u\rvert\, \chi_{\RRR} &=& \sum_{S=S(Q')\subset T(Q), \,S\text{ red}}\iint_{S(Q')}\lvert \nabla u\rvert\\
		&\leq& \sum\frac{C_{\lambda,n}}{\epsilon^2}\iint_{\widetilde S(Q')}t\lvert \nabla u\rvert^2\leq \frac{C_{\lambda,n}}{\epsilon^2}\iint_{T(\widetilde Q)}t\lvert \nabla u\rvert^2\\
		&\leq& C_{\lambda,n,(\ref{est.2})}\,|Q|
	\end{eqnarray*}
	by (\ref{est.2}), so that $\lvert \nabla u\rvert\, \chi_{\RRR}$ is a Carleson measure.

	Define now
	\begin{equation*}
		\varphi_2(z)=
		\begin{cases}
			\varphi_1(z), &z\notin \RRR\\
			u(z), &z\in\RRR
		\end{cases}
	\end{equation*}
We clearly have $\lvert u(z)-\varphi_2(z)\rvert \leq \epsilon$. Also, 
$\nabla \varphi_2(z)=\chi_{\RRR}\nabla u+\chi_{(T(Q_0)\backslash \RRR)}\nabla \varphi_1+J$, where $J$ accounts for the jumps of $\varphi_2$ as $z$ 
crosses $\partial \RRR\cap\reu$. Since $\lvert \varphi_2\rvert \leq 1+\epsilon, \,J$ is a measure dominated by $(1+\epsilon)\,dH^{n}\lfloor_{\,\partial \RRR}$. This last measure is Carleson by a previous estimate.   
This proves Theorem \ref{thm.a}.
\end{proof}

\end{document}